\title{Rich doctrines and Henkin's Theorem}
\author{Francesca Guffanti}
\date{}
\setlist[itemize]{parsep=0pt}
\setlist[enumerate]{parsep=0pt}
\numberwithin{equation}{section}
\theoremstyle{definition}
\newtheorem{theorem}{Theorem}[section]
\newtheorem{proposition}[theorem]{Proposition}
\newtheorem{lemma}[theorem]{Lemma}
\newtheorem{claim}[theorem]{Claim}
\newtheorem{definition}[theorem]{Definition}
\newtheorem{notation}[theorem]{Notation}
\newtheorem{example}[theorem]{Example}
\newtheorem{remark}[theorem]{Remark}
\mathchardef\colon="303A 
\def\pws{\mathop{\mathscr{P}\kern-1.1ex_{\ast}}}
\DeclareMathOperator{\OPRid}{id}
\newcommand{\id}[1]{\ensuremath{\OPRid_{#1}}}
\newcommand{\ct}[1]{\ensuremath{\mathbb{#1}}\xspace}
\newcommand{\CC}{\ct{C}}
\newcommand{\Pos}{\mathbf{Pos}}
\newcommand{\Cat}{\mathbf{Cat}}
\newcommand{\Hom}{\mathop{\mathrm{Hom}}}
\newcommand{\Dott}{\mathbf{Dct}}
\newcommand{\ple}[1]{\langle#1\rangle}
\newcommand{\op}{^{\mathrm{op}}}
\newcommand{\blank}{\mathrm{-}}
\newcommand{\pr}[1]{\ensuremath{\mathrm{pr}_{#1}}}
\newcommand{\tmn}{\mathbf{t}}
\newcommand{\Set}{\textnormal{Set}}
\newcommand{\des}{\mathscr{D}es}
\newcommand{\fbf}{\mathtt{LT}}
\newcommand{\ctx}{\mathrm{\mathbb{C}tx}}
\newcommand{\pointinproof}[1]{\newline{\bf#1:}}
\def\ECC(((#1))){\left[#1\right]}
\def\ecc#1{\ECC((#1))}
\newcommand{\DELTA}{\mathbf{\Delta}}
\begin{document}
\newlength{\myindent} 
\setlength{\myindent}{\parindent}
\parindent 0em 
\maketitle

\begin{abstract}
We find a possible interpretation of Henkin’s Theorem in the language of existential implicational doctrines. Under some smallness assumption, starting from an implicational existential doctrine, with non-trivial fibers, we construct a new doctrine which is rich---meaning that for every formula $\varphi(x)$ there is a constant $c$ such that $\exists x\varphi(x)$ has the same truth-value of $\varphi(c)$---and consistent. To obtain this result, we add a suitable amount of constants and axioms to the starting doctrine. We then show that a rich consistent doctrine admits an appropriate morphism towards the doctrine of subsets---a model. Henkin’s Theorem for doctrines follows from these two results, modeling our proof on the main lines of the original theorem.
\end{abstract}

\setcounter{tocdepth}{5}
\tableofcontents

\section{Introduction}
In a series of seminal papers \cite{adjfound,lawdiag,lawequality}, Lawvere introduced the concept of hyperdoctrine, aiming to interpret the syntax and the semantics of first-order theories in the same categorical framework. In this work we will actually deal with their further generalization---doctrines---adding the needed structure along the way. A doctrine is a functor $P:\CC\op\to\Pos$, from a category  $\CC$ with finite products into the category  $\Pos$ of partially ordered sets and monotone functions. A class of problems in this field involves determining which and how some classical results in logic can be interpreted with doctrines. Of course, one of the most important results is Gödel's Completeness Theorem for first-order logic. The theorem states that if a formula is valid in every model, then there is a formal proof of the formula. Modern proofs of the Completeness Theorem actually use Henkin's Theorem \cite{henkin}. The aim of this work is the analysis of this last theorem, formulated as follows:
\begin{quote}\emph{Every consistent theory has a model}.
\end{quote}
Some key points in the proof of the original theorem are adding a suitable amount of constants to the starting language, and then adding some axioms of the extended language to the starting theory.
In \cite{guff2} we began the investigation on how to interpret both of these instances, but in a finite way. We can then proceed with the interpretation of Henkin's proof by adding a suitable amount of new constant symbols to a starting doctrine $P$. To this aim, we first need to compute colimits of directed diagrams in the category of doctrines $\Dott$: to have an insight into this process, once we know how to add one constant symbol, we can iterate the construction to add a finite number of constant symbols. Then, taking the colimit over a convenient directed diagram $D:J\to\Dott$ in which every image $D(j)$ for $j\in J$ is a doctrine with a finite number of constants added, we can add an infinite amount of constants. This construction gives a morphism $P\to\underline{P}$ from the original doctrine into the colimit $\underline{P}$.
The next step is to add new axioms to the new doctrine $\underline{P}$. To do this, we work with implicational existential doctrines---i.e.\ doctrines in which we can interpret finite conjunctions, the implication and the existential quantifier. In this setting, for any formula $\varphi(x)$, we make true a formula of the kind $\exists x \varphi(x)\to\varphi(c)$ for some suitable constant $c$. Since there is an infinite number of axioms that we have to add, we use a similar technique to the one seen before: we define a directed diagram $\DELTA:I\to\Dott$ in which every image $\DELTA(i)$ for $i\in I$ is a doctrine with a finite number of axioms added, so the colimit adds all the needed axioms. This construction gives another morphism $\underline{P}\to\underrightarrow{P}$ into the colimit $\underrightarrow{P}$, and in particular a morphism $P\to\underrightarrow{P}$. These constructions from $P$ to $\underline{P}$ to $\underrightarrow{P}$ are done in \Cref{sect:dir_colim,sect:label}.

In \Cref{sub:weak_univ_prop} we show that the doctrine $\underrightarrow{P}$ is rich: for each formula $\varphi(x)$ there exists a constant $c$ such that $\varphi(c)$ and $\exists x\varphi(x)$ have the same truth-value.

When the starting doctrine $P$ is also bounded---i.e.\ a doctrine in which we can also interpret the false---, we find the properties for $P$ in order to have that the doctrine $\underrightarrow{P}$ is coherent, since we obviously do not want the doctrine $\underrightarrow{P}$ to be such that the true constant and the false collapse in the same formula. \Cref{sect:coher} collect all these results: initially \Cref{prop:coher_bool} establishes the consistency of $\underrightarrow{P}$ in the Boolean case, then \Cref{prop:coher} shows consistency in the implicational setting, following from a weak universal property of $\underrightarrow{P}$.

Finally, we prove in \Cref{prop:mod} that a bounded consistent implicational existential rich doctrine has a morphism to the doctrine of subsets, the ``standard" model. Applying this proposition to the rich doctrine $\underrightarrow{P}$, we obtain \Cref{thm:mod_ex}:
\begin{quote}\emph{Let $P$ be a bounded existential implicational doctrine, with non-trivial fibers and with a small base category. Then there exists a bounded existential implicational model of $P$ in the doctrine of subsets $\pws :\Set_{\ast}\op\to\Pos$.}\end{quote}

\nocite{borhandbk,elephant,CatWorMat}
\section{Preliminaries on doctrines}
In this section, we define the 2-category of doctrines and show some relevant examples. The definition in the following form can be found in \cite{trotta}, as the generalization of Lawvere's hyperdoctrine reduced to its basic structure. Then we will gradually add more structure in order to be able to interpret symbols of first-order logic---such as connectives and quantifiers---in the context of doctrines.
\begin{definition}
Let $\ct{C}$ be a category with finite products and let $\Pos$ be the category of partially-ordered sets and monotone functions. A \emph{doctrine} is a functor $P:\ct{C}\op\to\Pos$. The category $\CC$ is called \emph{base category of $P$}, each poset $P(X)$ for an object $X\in\CC$ is called \emph{fiber}, the function $P(f)$ for an arrow $f$ in $\CC$ is called \emph{reindexing}.
\end{definition}
\begin{notation}
In the paper, we write $\tmn_\CC$ for the terminal object of the category $\CC$. We omit the subscript when there's no confusion.
\end{notation}
\begin{example}\label{ex:doctr}We propose the following examples.
\begin{enumerate}[label=(\alph*)]
\item The functor $\mathscr{P}:\Set\op\to\Pos$, sending each set in the poset of its subsets, ordered by inclusion, and each function $f:A\to B$ to the inverse image $f^{-1}:\mathscr{P}(B)\to\mathscr{P}(A)$ is a doctrine.
\item For a given theory $\mathcal{T}$ in a one-sorted first-order language $\mathcal{L}$, define the category $\ctx_\mathcal{L}$ of contexts: an object is a finite list of distinct variables and an arrow between $\vec x=(x_1,\dots, x_n)$ and $\vec y=(y_1,\dots, y_m)$ is
\begin{equation*}(t_1(\vec x),\dots,t_m(\vec x)):(x_1,\dots, x_n) \to(y_1,\dots, y_m)\end{equation*}
an $m$-tuple of terms in the context $\vec x$. The functor $\fbf^\mathcal{L}_{\mathcal{T}}:\ctx_\mathcal{L}\op\to\Pos$ sends each list of variables to the poset reflection of well-formed formulae written with at most those free variables ordered by provable consequence in $\mathcal{T}$; moreover, the functor $\fbf^\mathcal{L}_{\mathcal{T}}$ sends an arrow $\vec{t}(\vec{x}):\vec{x}\to\vec{y}$ into the substitution $[\vec{t}(\vec{x})/\vec{y}]$.
\end{enumerate}
\end{example}
\begin{definition}
A \emph{doctrine morphism}---or 1-arrow---between $P:\ct{C}\op\to\Pos$ and $R:\ct{D}\op\to\Pos$ is a pair $(F,\mathfrak f)$ where $F:\ct{C}\to\ct{D}$ is a functor that preserves finite products and $\mathfrak{f}:P\xrightarrow{\cdot}R\circ F\op$ is a natural transformation. Sometimes a morphism between $P$ and $R$ will be called a model of $P$ in $R$.
A \emph{2-cell} between $(F,\mathfrak f)$ and $(G,\mathfrak g)$ from $P$ to $R$ is a natural transformation $\theta:F\xrightarrow{\cdot} G$ such that $\mathfrak{f}_A(\alpha)\leq R(\theta_A)(\mathfrak{g}_A(\alpha))$ for any object $A$ in $\ct{C}$ and $\alpha\in P(A)$. Doctrine, doctrine morphisms with 2-cells defined here form a $2$-category, that will be denoted $\Dott$.
\end{definition}
\[\begin{tikzcd}
	{\ct{C}\op} && {\ct{D}\op} && {\ct{C}\op} && {\ct{D}\op} \\
	& \Pos \\
	&&&&& \Pos
	\arrow["F\op", from=1-1, to=1-3]
	\arrow[""{name=0, anchor=center, inner sep=0}, "P"', from=1-1, to=2-2]
	\arrow[""{name=1, anchor=center, inner sep=0}, "R", from=1-3, to=2-2]
	\arrow[""{name=2, anchor=center, inner sep=0}, "F\op"{description}, curve={height=-12pt}, from=1-5, to=1-7]
	\arrow[""{name=3, anchor=center, inner sep=0}, "P"', from=1-5, to=3-6]
	\arrow[""{name=4, anchor=center, inner sep=0}, "R", from=1-7, to=3-6]
	\arrow[""{name=5, anchor=center, inner sep=0}, "G\op"{description}, curve={height=12pt}, from=1-5, to=1-7]
	\arrow["{\mathfrak{f}}"', curve={height=-6pt}, shorten <=8pt, shorten >=8pt, from=0, to=1]
	\arrow["{\mathfrak f}"{description}, curve={height=-6pt}, shorten <=8pt, shorten >=8pt, from=3, to=4]
	\arrow["{\mathfrak g}"{description}, curve={height=6pt}, shorten <=8pt, shorten >=8pt, from=3, to=4]
	\arrow["\theta\op", shorten <=3pt, shorten >=3pt, Rightarrow, from=5, to=2]
\end{tikzcd}\]
By definition of doctrine, the fibers are simply posets. However, we can define specific doctrines by imposing additional structure on these posets or by requiring the existence of adjoints to certain reindexing. To work in a setting that interprets the conjunction of formulae and the true constant, primary doctrines are necessary, which can be found in \cite{ElemQuotCompl,quotcomplfoun,unifying}.
\begin{definition}
A \emph{primary doctrine} $P:\ct{C}\op\to\Pos$ is a doctrine such that for each object $A$ in $\ct{C}$, the poset $P(A)$ has finite meets, and the related operations $\land:P\times P\xrightarrow{\cdot} P$ and $\top:\mathbf{1}\xrightarrow{\cdot} P$ yield natural transformations.
\end{definition}
\begin{example}
All doctrines in \Cref{ex:doctr} are primary doctrines:
\begin{enumerate}[label=(\alph*)]
\item In the doctrine $\mathscr{P}:\Set\op\to\Pos$, for any set $A$, intersection of two subsets is their meet, $A$ is the top element.
\item In the doctrine $\fbf^\mathcal{L}_{\mathcal{T}}:\ctx_\mathcal{L}\op\to\Pos$, for any list $\vec x$, the conjunction of two formulae is their binary meet, the true constant $\top$ is the top element.
\end{enumerate}
\end{example}
In order to interpret equality, the existence of left adjoints to reindexing of diagonal arrows is required, as originally observed by Lawvere in \cite{lawequality}. The definition of elementary doctrine we propose here can be found in Proposition 2.5 of \cite{EmPaRo}, and is equivalent to other definitions that are used in \cite{ElemQuotCompl,quotcomplfoun}:
\begin{definition}\label{def:elem}
A primary doctrine $P:\ct{C}\op\to\Pos$ is \emph{elementary} if for any object $A$ in $\ct{C}$ there exists an element $\delta_A\in P(A\times A)$ such that:
\begin{enumerate}
\item\label{i:1-equality} $\top_A\leq P(\Delta_A)(\delta_A)$;
\item\label{i:2-equality} $P(A)=\des_{\delta_A}:=\{\alpha\in P(A)\mid P(\pr1)(\alpha)\land\delta_A\leq P(\pr2)(\alpha)\}$;
\item\label{i:3-equality} $\delta_A\boxtimes\delta_B\leq\delta_{A\times B}$, where $\delta_A\boxtimes\delta_B=P(\ple{\pr1,\pr3})(\delta_A)\land P(\ple{\pr2,\pr4})(\delta_B)$.
\end{enumerate}
In \ref{i:2-equality}., $\pr1$ and $\pr2$ are the projections from $A\times A$ in $A$; in \ref{i:3-equality}., the projections are from $A\times B\times A\times B$. The element $\delta_A$ will be called \emph{fibered equality} on $A$.
\end{definition}
The following lemma will be useful to do some computation later. Its proof can be found in Proposition 2.5 of \cite{EmPaRo} (equation $(1)$).
\begin{lemma}\label{lemma:aeq}
Let $P:\CC\op\to\Pos$ be an elementary doctrine, let $C$ be an object in the base category and $\gamma\in P(C\times C)$. Then
\begin{equation*}P(\ple{\pr1,\pr1})(\gamma)\land\delta_C\leq\gamma.\end{equation*}
\end{lemma}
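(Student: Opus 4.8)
The plan is to use the descent property of \Cref{def:elem} not on $C$ but on the object $C\times C$, since $\gamma$ lies in the fiber $P(C\times C)$. Applying $P(C\times C)=\des_{\delta_{C\times C}}$ to $\gamma$ (part \ref{i:2-equality} of \Cref{def:elem}) gives
\begin{equation*}P(\pr1)(\gamma)\land\delta_{C\times C}\leq P(\pr2)(\gamma),\end{equation*}
where now $\pr1,\pr2\colon(C\times C)\times(C\times C)\to C\times C$ are the two projections. First I would reindex this single inequality along a well-chosen map $f\colon C\times C\to(C\times C)\times(C\times C)$, designed so that precomposing with the first projection yields $\ple{\pr1,\pr1}$ while precomposing with the second yields the identity on $C\times C$.

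Concretely, I would set $f=\ple{\ple{\pr1,\pr1},\id{C\times C}}$, so that $\pr1\circ f=\ple{\pr1,\pr1}$ and $\pr2\circ f=\id{C\times C}$. Since reindexings are monotone and preserve finite meets in a primary doctrine, applying $P(f)$ and using functoriality turns the displayed inequality into
\begin{equation*}P(\ple{\pr1,\pr1})(\gamma)\land P(f)(\delta_{C\times C})\leq\gamma.\end{equation*}
It then suffices to prove that $\delta_C\leq P(f)(\delta_{C\times C})$: meeting the last inequality with $\delta_C$ on the left and using monotonicity would yield exactly the claim.

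For this remaining inequality I would appeal to the compatibility condition (part \ref{i:3-equality} of \Cref{def:elem}): reindexing $\delta_C\boxtimes\delta_C\leq\delta_{C\times C}$ along $f$ gives $P(f)(\delta_C\boxtimes\delta_C)\leq P(f)(\delta_{C\times C})$. Expanding $\delta_C\boxtimes\delta_C=P(\ple{\pr1,\pr3})(\delta_C)\land P(\ple{\pr2,\pr4})(\delta_C)$ and checking the composite identities $\ple{\pr1,\pr3}\circ f=\ple{\pr1,\pr1}$ and $\ple{\pr2,\pr4}\circ f=\id{C\times C}$ shows that $P(f)(\delta_C\boxtimes\delta_C)=P(\ple{\pr1,\pr1})(\delta_C)\land\delta_C$; and since $\ple{\pr1,\pr1}=\Delta_C\circ\pr1$, reflexivity (part \ref{i:1-equality} of \Cref{def:elem}) forces $P(\ple{\pr1,\pr1})(\delta_C)=\top_{C\times C}$, so that $P(f)(\delta_C\boxtimes\delta_C)=\delta_C$ and hence $\delta_C\leq P(f)(\delta_{C\times C})$. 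The only real obstacle is the realisation that one must pass through the equality $\delta_{C\times C}$ on the product and then descend back to $\delta_C$ via the $\boxtimes$-condition; the rest is bookkeeping, where the genuine care needed is to keep straight which projections are meant at each stage---those out of $C\times C$, out of $(C\times C)\times(C\times C)$, and out of $C\times C\times C\times C$---so that the composite identities line up.
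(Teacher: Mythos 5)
Your proof is correct: the descent condition of \Cref{def:elem} applied to $\gamma$ over the object $C\times C$, reindexed along $f=\ple{\ple{\pr1,\pr1},\id{C\times C}}$, together with $\delta_C\leq P(f)(\delta_{C\times C})$ obtained from the $\boxtimes$-inequality and reflexivity (via $\ple{\pr1,\pr1}=\Delta_C\circ\pr1$), yields exactly the claim, and all the composite identities you check do line up. The paper itself gives no proof, deferring to Proposition 2.5 (equation $(1)$) of \cite{EmPaRo}, and your argument is essentially the standard one found there, so there is nothing to fix.
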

\begin{remark}\label{rmk:aeq}
The converse inequality of property \ref{i:3-equality}.\ in \Cref{def:elem} holds as well:
\begin{equation*}\delta_{A\times B}\leq P(\ple{\pr1,\pr3})(\delta_A)\land P(\ple{\pr2,\pr4})(\delta_B).\end{equation*}
To show $\delta_{A\times B}\leq P(\ple{\pr1,\pr3})(\delta_A)$, apply \Cref{lemma:aeq} to observe that
\begin{equation*}P(\ple{\pr1,\pr2,\pr1,\pr2})P(\ple{\pr1,\pr3})\delta_A\land\delta_{A\times B}=\delta_{A\times B}\leq P(\ple{\pr1,\pr3})\delta_A.\end{equation*}
Similarly $\delta_{A\times B}\leq P(\ple{\pr2,\pr4})(\delta_B)$.
\end{remark}
\begin{example}
\begin{enumerate}[label=(\alph*)]
\item In the doctrine $\mathscr{P}$, for any set $A$, the subset $\Delta_A=\{(a,a)\mid a \in A\}\subseteq A\times A$ is the fibered equality on $A$.
\item In the doctrine $\fbf^\mathcal{L}_{\mathcal{T}}$, if the language $\mathcal{L}$ has equality, then for any list $\vec x$, the formula $\big(x_1=x_1'\land\dots\land x_n=x_n'\big)$ in $\fbf^\mathcal{L}_\mathcal{T}(\vec x;\vec x')$ is the fibered equality on $\vec{x}$.
\end{enumerate}
\end{example}
We now interpret the existential and universal quantifier as respectively the left and the right adjoint to the reindexing along a product projection.
\begin{definition}[\cite{quotcomplfoun}]\label{def:exist}
A primary doctrine $P:\ct{C}\op\to\Pos$ is \emph{existential} if for any pair of objects $B, C$ of $\ct{C}$, the map $P(\pr1):P(C)\to P(C\times B)$ has a left adjoint
\begin{equation*}\exists^B_C:P(C\times B)\to P(C),\end{equation*}
satisfying:
\begin{itemize}
\item[-] Beck-Chevalley condition with respect to pullback diagrams of the form:
\[\begin{tikzcd}[ampersand replacement=\&]
	{C\times B} \& C \\
	{C'\times B} \& {C'}
	\arrow["{f\times \id{B}}"', from=1-1, to=2-1]
	\arrow["f", from=1-2, to=2-2]
	\arrow["\pr1"', from=2-1, to=2-2]
	\arrow["\lrcorner"{anchor=center, pos=0.125}, draw=none, from=1-1, to=2-2]
	\arrow["\pr1", from=1-1, to=1-2]
\end{tikzcd}\]
that is, $\exists^B_CP(f\times\id{B})=P(f)\exists^B_{C'}$.
\item[-] Frobenius reciprocity, that is, for any $\alpha\in P(C\times B)$ and $\beta\in P(C)$ the equality
\[\exists^B_C(\alpha\land P(\pr1)(\beta))=\exists^B_C(\alpha)\land \beta\]
holds.
\end{itemize}
\end{definition}
\begin{definition}[\cite{pasq}]
A doctrine $P:\ct{C}\op\to\Pos$ is \emph{universal} if for any pair of objects $B, C$ of $\ct{C}$, the map $P(\pr1):P(C)\to P(C\times B)$ has a right adjoint $\forall^B_C:P(C\times B)\to P(C)$, satisfying Beck-Chevalley condition with respect to pullback diagrams of the same form as \Cref{def:exist}, that is $P(f)\forall^B_{C'}=\forall^B_CP(f\times\id{B})$.
\end{definition}
\begin{definition}
A doctrine $P:\ct{C}\op\to\Pos$:
\begin{itemize}
\item is \emph{implicational} if for any object $A$, the poset $P(A)$ is cartesian closed, and the related operations $\land:P\times P\xrightarrow{\cdot} P$, $\top:\mathbf{1}\xrightarrow{\cdot} P$, $\to:P\op\times P\xrightarrow{\cdot} P$ yield natural transformations---in particular it is a primary doctrine (\cite{quotcomplfoun});
\item \emph{has bottom element} if for any object $A$, the poset $P(A)$ has a bottom element, and the related operation, $\bot:\mathbf{1}\xrightarrow{\cdot} P$ yields a natural transformation;
\item \emph{is bounded} if for any object $A$, the poset $P(A)$ has a top and a bottom element, and the related operation, $\top:\mathbf{1}\xrightarrow{\cdot} P$ and $\bot:\mathbf{1}\xrightarrow{\cdot} P$ yield natural transformations;
\item \emph{has finite joins} if for any object $A$, the poset $P(A)$ has finite joins, and the related operations $\lor:P\times P\xrightarrow{\cdot} P$, $\bot:\mathbf{1}\xrightarrow{\cdot} P$ yield natural transformations;
\item is \emph{Heyting} if for any object $A$, the poset $P(A)$ is an Heyting algebra, and the related operations $\land:P\times P\xrightarrow{\cdot} P$, $\top:\mathbf{1}\xrightarrow{\cdot} P$, $\to:P\op\times P\xrightarrow{\cdot} P$, $\lor:P\times P\xrightarrow{\cdot} P$, $\bot:\mathbf{1}\xrightarrow{\cdot} P$ yield natural transformations;
\item is \emph{Boolean} if it is Heyting and the operation $\lnot(\blank):=(\blank)\to\bot:P\op\xrightarrow{\cdot}P$ is an isomorphism. 
\end{itemize}
\end{definition}
\begin{definition}
Any morphism $(F,\mathfrak f):P\to R$ from $P:\ct{C}\op\to\Pos$ to $R:\ct{D}\op\to\Pos$ is called respectively \emph{primary, elementary, existential, universal, implicational, bounded, Heyting, Boolean} if both $P$ and $R$ are, and $\mathfrak{f}$ preserves the said structure. 
\end{definition}
To conclude our introduction on doctrines, let us briefly recall from \cite{guff2} the construction that adds constant and that adds an axiom to a doctrine, since we will use them as the finite steps of the directed diagrams in \Cref{sect:dir_colim,sect:dir_colim2}.

We start with adding a constant: let $P:\CC\op\to\Pos$ be a doctrine and let $X$ be a fixed object in the base category. Consider the reader comonad $X\times\blank$ on $\CC$, and let $\CC_X$ be the Kleisli category of the comonad, and use the following presentation of the category $\CC_X$: it has the same objects as $\CC$; an arrow in $\CC_X$ from $A$ to $B$---we will write $A\rightsquigarrow B$---is actually $\CC$-arrow $X\times A\to B$. Composition between $f:A\rightsquigarrow B$ and $g:B\rightsquigarrow C$ is the arrow $g\ple{ \pr1,f}:A\rightsquigarrow C$, the identity $A\rightsquigarrow A$ on $A$ is given by the projection over the second component $X\times A\to A$. In particular, the new constant of sort $X$ added to the base category is $\id{X}:\tmn\rightsquigarrow X$. Define the doctrine $P_X:{\CC_X}\op\to\Pos$ as follows:
\begin{equation}\label{def:const}
\text{For}
\begin{tikzcd}
B\\
A\arrow[u,"f", squiggly]
\end{tikzcd}
\text{the reindexing is}
\begin{tikzcd}
P(X\times B)\arrow[d,"{P(\ple{ \pr1,f })}"]\\
P(X\times A)
\end{tikzcd}.
\end{equation}
The order in the fibers of the doctrine $P_X$ is computed as in $P$.
This new doctrines also comes with a 1-arrow $(F_X,\mathfrak{f}_X):P\to P_{X}$, where $F_X$ is the cofree functor sending $g:A\to B$ to $g\pr2:A\rightsquigarrow B$, while each component of the natural transformation $\mathfrak{f}_X$ is given by reindexing along the second projection: $(\mathfrak{f}_X)_A:=P(\pr2):P(A)\to P(X\times A)$.
The construction satisfies the following universal property---see Theorem 6.2 and Corollary 6.7 in \cite{guff2}.
\begin{theorem}\label{coroll:const}
Let $P:\CC\op\to\Pos$ be a doctrine. Given an object $X$ in the base category, the 1-arrow $(F_X,\mathfrak{f}_X):P\to P_X$ and the $\CC_X$-arrow $\id{X}:\tmn_{\CC_X}\rightsquigarrow X$ are universal, i.e.\ for any 1-arrow $(G,\mathfrak{g}):P\to R$, where $R:\ct{D}\op\to\Pos$ is a doctrine, and any $\ct{D}$-arrow $c:\tmn_\ct{D}\to G(X)$ there exists a unique up to a unique natural isomorphism 1-arrow $(G',\mathfrak{g}'):P_X\to R$ such that $(G',\mathfrak{g}')\circ(F_X,\mathfrak{f}_X)=(G,\mathfrak{g})$ and $G'(\id{X})=c$.
\end{theorem}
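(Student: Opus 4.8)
The plan is to build the factorisation $(G',\mathfrak g')$ by hand, in the only way the two required equations allow, and then to read uniqueness off the very same formulas. First I record the product structure of $\CC_X$: the terminal object is $\tmn_\CC$, the binary product of $A$ and $B$ is the $\CC$-product $A\times B$ with Kleisli projections $F_X(\pr1),F_X(\pr2)$ and Kleisli pairing given by the $\CC$-pairing (a one-line check that $F_X(\pr1)\circ\ple{f,g}=f$ in $\CC_X$). Thus $F_X$ preserves finite products, which is what makes $P_X$ a doctrine and lets me ask that $G'$ be product-preserving. Since $F_X$ is the identity on objects and I want $G'\circ F_X=G$, I am forced to put $G'(A)=G(A)$. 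Writing $\phi_{A,B}\colon G(A\times B)\xrightarrow{\sim}G(A)\times G(B)$ for the coherence isomorphisms of $G$ and setting $k_A:=\phi_{X,A}^{-1}\circ\ple{c\circ{!_{G(A)}},\id{G(A)}}\colon G(A)\to G(X\times A)$, I define, for a $\CC_X$-arrow $f\colon A\rightsquigarrow B$ (that is, a $\CC$-arrow $f\colon X\times A\to B$), $G'(f):=G(f)\circ k_A$.

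That this is the only possible choice rests on one identity: every $f\colon A\rightsquigarrow B$ factors in $\CC_X$ as $f=F_X(f)\circ s_A$, where $s_A:=\ple{\id{X}\circ{!_A},\id{A}}\colon A\rightsquigarrow X\times A$ is, after computing the Kleisli composite $\id{X}\circ{!_A}=\pr1$ and the Kleisli pairing, simply the $\CC$-identity $\id{X\times A}$ read as a Kleisli arrow. Because $\id{X}$ is the added constant and ${!_A},\id{A}$ lie in the image of $F_X$, any admissible product-preserving $G'$ with $G'(\id{X})=c$ must send $s_A$ to $k_A$, whence $G'(f)=G(f)\circ k_A$; so the displayed formula is forced. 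Functoriality is then a direct computation: identities use that the $\CC_X$-identity is $\pr2$ and $G(\pr2)\circ k_A=\id{G(A)}$; composition and product-preservation follow from the factorisation together with $G'F_X=G$. The equation $G'(\id{X})=c$ holds because $G(\tmn)=\tmn_{\ct{D}}$ collapses $k_{\tmn}$ to $\ple{c,\id{}}$ and $G(\pr1)$ then projects onto $c$.

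For the natural transformation I set $(\mathfrak g')_A:=R(k_A)\circ\mathfrak g_{X\times A}\colon P(X\times A)=P_X(A)\to R(G(A))$; naturality is verified on the square attached to each $f$. The compatibility $(G',\mathfrak g')\circ(F_X,\mathfrak f_X)=(G,\mathfrak g)$ reduces on the natural-transformation part to $(\mathfrak g')_A\circ P(\pr2)=\mathfrak g_A$, which follows from naturality of $\mathfrak g$ at $\pr2\colon X\times A\to A$ together with $G(\pr2)\circ k_A=\id{G(A)}$. For uniqueness, given any admissible $(G'',\mathfrak g'')$, the factorisation argument already gives $G''=G'$; to force $\mathfrak g''=\mathfrak g'$ I apply naturality of $\mathfrak g''$ at $s_A$ to the element $\beta:=(\mathfrak f_X)_{X\times A}(\alpha)=P(\pr2)(\alpha)$. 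Since $P_X(s_A)(\beta)=P(\pr2\circ\ple{\pr1,\id{X\times A}})(\alpha)=\alpha$ and $(\mathfrak g'')_{X\times A}(\beta)=\mathfrak g_{X\times A}(\alpha)$ by the established compatibility, the naturality square yields $(\mathfrak g'')_A(\alpha)=R(G''(s_A))(\mathfrak g_{X\times A}(\alpha))=R(k_A)(\mathfrak g_{X\times A}(\alpha))=(\mathfrak g')_A(\alpha)$. Hence the two factorisations coincide, and the statement's ``unique up to unique natural isomorphism'' holds, with the isomorphism in fact the identity once the coherence data of $G$ are fixed.

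The step I expect to be the main obstacle is precisely this last pinning-down of $\mathfrak g''$: the compatibility condition only determines $\mathfrak g''$ on the image of $P(\pr2)$, which need not exhaust $P(X\times A)$, so no surjectivity is available and one genuinely needs the naturality square at $s_A$ evaluated at the retraction $\beta=P(\pr2)(\alpha)$ that reabsorbs the extra $X$-coordinate. The remaining work—functoriality of $G'$ and the $2$-categorical bookkeeping—is routine by comparison.
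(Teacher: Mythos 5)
Your proof is correct and complete; note that the paper itself does not prove this theorem but cites Theorem 6.2 and Corollary 6.7 of the author's earlier work, and your reconstruction is exactly the expected argument for the Kleisli-style universal property: define $G'(f)=G(f)\circ k_A$ by plugging in the constant, show it is forced by the factorisation $f=F_X(f)\circ s_A$ with $s_A=\ple{\id{X}\circ !_A,\id{A}}$, and set $(\mathfrak g')_A=R(k_A)\circ\mathfrak g_{X\times A}$. You also correctly identify and resolve the one genuinely delicate point---that the compatibility $(\mathfrak g'')_A\circ P(\pr2)=\mathfrak g_A$ alone only pins down $\mathfrak g''$ on the image of $P(\pr2)$, so uniqueness requires the naturality square of $\mathfrak g''$ at $s_A$ evaluated at $\beta=P(\pr2)(\alpha)$, using $P(\pr2\circ\ple{\pr1,\id{X\times A}})(\alpha)=\alpha$ to recover an arbitrary $\alpha\in P_X(A)$.
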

We then proceed by recalling how to add an axiom, this time to a primary doctrine: let $P:\CC\op\to\Pos$ be a primary doctrine and let $\varphi\in P(\tmn)$ be a fixed formula in the fiber over the terminal object $\tmn$ of the base category. Define the doctrine $P_\varphi:{\CC}\op\to\Pos$ as follows:
\begin{equation}\label{def:ax}
\text{For}
\begin{tikzcd}
B\\
A\arrow[u,"f"]
\end{tikzcd}
\text{the reindexing is}
\begin{tikzcd}
P(B)_{\downarrow P(!_B)\varphi}\arrow[d,"{P(f )}"]\\
P(A)_{\downarrow P(!_A)\varphi}
\end{tikzcd},
\end{equation}
where $P(!_A)$ is the reindexing along the unique arrow $!_A:A\to \tmn$ and $P(A)_{\downarrow P(!_A)\varphi}=\{\alpha\in P(A)\mid \alpha\leq P(!_A)\varphi\}$. This is a primary doctrine, where in the fibers of $P_\varphi$ order and binary conjunctions are computed as in $P$, and the top element is given by $P(!)\varphi$.
This new doctrines also comes with a primary 1-arrow $(\id{\CC},\mathfrak{f}_\varphi):P\to P_{\varphi}$, where for any object $A$ the corresponding component of the natural transformation $(\mathfrak{f}_\varphi)_A:P(A)\to P_\varphi(A)$ maps an element $\alpha\in P(A)$ to $P(!_A)\varphi\land \alpha\in P_\varphi(A)$. Also this construction satisfies a universal property---see Theorem 6.2 and Corollary 6.5 in \cite{guff2}.
\begin{theorem}\label{coroll:ax}
Let $P:\CC\op\to\Pos$ be a primary doctrine. Given an element $\varphi\in P(\tmn)$, the 1-arrow $(\id{\CC},\mathfrak{f}_\varphi):P\to P_\varphi$ is such that $\top\leq (\mathfrak{f}_\varphi)_\tmn(\varphi)$ in $P_\varphi(\tmn)$, and it is universal with respect to this property, i.e.\ for any primary 1-arrow $(G,\mathfrak{g}):P\to R$, where $R:\ct{D}\op\to\Pos$ is a primary doctrine, such that $\top\leq \mathfrak{g}_\tmn(\varphi)$ in $R(\tmn_\ct{D})$ there exists a unique up to a unique natural isomorphism primary 1-arrow $(G',\mathfrak{g}'):P_{\varphi}\to R$ such that $(G',\mathfrak{g}')\circ(\id{\CC},\mathfrak{f}_\varphi)=(G,\mathfrak{g})$.
\end{theorem}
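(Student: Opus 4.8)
The plan is to produce the factoring 1-arrow by restriction and then read off uniqueness directly from the factorization equation. I first dispose of the preliminary inequality. Since $!_\tmn=\id{\tmn}$, we have $(\mathfrak f_\varphi)_\tmn(\varphi)=P(\id{\tmn})\varphi\land\varphi=\varphi$, which is exactly the top element $P(!_\tmn)\varphi$ of the fiber $P_\varphi(\tmn)$; hence $\top=(\mathfrak f_\varphi)_\tmn(\varphi)$, and in particular $\top\leq(\mathfrak f_\varphi)_\tmn(\varphi)$.

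For the universal property, let $(G,\mathfrak g)\colon P\to R$ be a primary 1-arrow with $\top\leq\mathfrak g_\tmn(\varphi)$. Because the functor component of $(\id{\CC},\mathfrak f_\varphi)$ is the identity and $P_\varphi$ has the same base category $\CC$ as $P$, the required factorization forces the functor component of $(G',\mathfrak g')$ to be $G$ itself, so I set $G':=G$. For the natural transformation I define $\mathfrak g'_A\colon P_\varphi(A)\to R(G(A))$ to be the restriction of $\mathfrak g_A$ to the subposet $P_\varphi(A)=\{\alpha\in P(A)\mid\alpha\leq P(!_A)\varphi\}\subseteq P(A)$. This is well typed, and the whole argument hinges on one computation: that $\mathfrak g_A$ sends the top element $P(!_A)\varphi$ of $P_\varphi(A)$ to the genuine top of $R(G(A))$.

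To establish it I use naturality of $\mathfrak g$ along $!_A\colon A\to\tmn$ together with preservation of finite products by $G$, so that $G(!_A)=!_{G(A)}$; this gives $\mathfrak g_A(P(!_A)\varphi)=R(!_{G(A)})(\mathfrak g_\tmn(\varphi))$. The hypothesis $\top\leq\mathfrak g_\tmn(\varphi)$ means $\mathfrak g_\tmn(\varphi)=\top$, so, since reindexing preserves top in the primary doctrine $R$, the right-hand side equals $\top_{R(G(A))}$. With this in hand the factorization $\mathfrak g'_A\circ(\mathfrak f_\varphi)_A=\mathfrak g_A$ is immediate: for $\alpha\in P(A)$, using that $\mathfrak g$ preserves binary meets, $\mathfrak g_A(P(!_A)\varphi\land\alpha)=\top\land\mathfrak g_A(\alpha)=\mathfrak g_A(\alpha)$. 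The same computation also shows $(G,\mathfrak g')$ is a primary 1-arrow: naturality of $\mathfrak g'$ is inherited from $\mathfrak g$ because reindexing in $P_\varphi$ is the restriction of reindexing in $P$; preservation of binary meets is inherited because meets in $P_\varphi$ are computed as in $P$; and preservation of top is exactly the identity established above.

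Uniqueness is then read off the factorization equation. Any competitor $(G'',\mathfrak g'')$ must have $G''=G$ for the same reason as before, and for every $\beta\in P_\varphi(A)$ we have $\beta=P(!_A)\varphi\land\beta=(\mathfrak f_\varphi)_A(\beta)$, so evaluating $\mathfrak g''_A\circ(\mathfrak f_\varphi)_A=\mathfrak g_A$ at $\beta$ forces $\mathfrak g''_A(\beta)=\mathfrak g_A(\beta)=\mathfrak g'_A(\beta)$; thus $\mathfrak g''=\mathfrak g'$ on the nose, and the invertible 2-cell witnessing ``unique up to a unique natural isomorphism'' is the identity. I expect the only genuinely delicate point to be the central computation that $\mathfrak g_A$ collapses the fibered top $P(!_A)\varphi$ to $\top_{R(G(A))}$: this is what transports the external hypothesis $\top\leq\mathfrak g_\tmn(\varphi)$ to the behaviour of $\mathfrak g$ on every fiber, and it draws simultaneously on product preservation by $G$, on preservation of top by reindexing in $R$, and on naturality of $\mathfrak g$; every remaining verification is routine restriction bookkeeping.
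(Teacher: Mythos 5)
Your proof is correct and takes essentially the same route as the paper: the paper defers this statement to \cite{guff2}, but its in-text proof of the analogous universal property for the quotient $P/\nabla$ in \Cref{sub:quot} (of which $P_\varphi\cong P/{\uparrow\varphi}$ is the special case, as the paper remarks) turns on exactly your central computation, $\mathfrak{g}_A(P(!_A)\varphi)=R(G(!_A))\mathfrak{g}_\tmn(\varphi)=\top_{R(GA)}$ via naturality of $\mathfrak{g}$ and preservation of the top by reindexing in $R$. Your restriction-plus-forced-uniqueness bookkeeping matches the paper's argument, so there is nothing to correct.
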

Both constructions and their universal properties respect additional structures that the starting doctrine may enjoy. The proof of the following are instances of Propositions from 5.3 to 5.12 and of Theorem 6.3 in \cite{guff2}.
\begin{proposition}\label{prop:const}
Let $P:\CC\op\to \Pos$ be a doctrine, and let $P_X$, $(F_X,\mathfrak{f}_X):P\to P_X$ be defined as in \eqref{def:const}. Then if $P$ is primary (resp.\ elementary, existential, universal, implicational, bounded, Boolean), then $P_X$, and $(F_X,\mathfrak{f}_X)$ are primary (resp.\ elementary, existential, universal, implicational, bounded, Boolean).
Moreover let $R$, $(G,\mathfrak{g}):P\to R$ be the doctrines and a morphism with the same assumption of \Cref{coroll:const} above, and $(G',\mathfrak{g}'):P_X\to R$ be the morphism defined by the Theorem. Then if $P$, $R$ and $(G,\mathfrak{g})$ are primary (resp.\ elementary, existential, universal, implicational, bounded, Boolean), then also $(G',\mathfrak{g}')$ is primary (resp.\ elementary, existential, universal, implicational, bounded, Boolean).
\end{proposition}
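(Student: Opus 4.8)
The engine of the whole statement is the observation that the fibres of $P_X$ are literally fibres of $P$---we have $P_X(B)=P(X\times B)$ with the same order and the same fibrewise operations---and that the reindexings of $P_X$ are particular reindexings of $P$. The only preliminary I would set up carefully is the combinatorics of the base: finite products (and the terminal object) in $\CC_X$ coincide with those of $\CC$ and are preserved strictly by $F_X$, so the $\CC_X$-projection $C\times B\rightsquigarrow C$ is $F_X(\pr1)$ and reindexing along it is $P(q)$, where $q\colon X\times C\times B\to X\times C$ is the $\CC$-projection that forgets $B$; likewise the $\CC_X$-diagonal $B\rightsquigarrow B\times B$ is $F_X(\Delta_B)$. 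This dictionary reduces every clause to the corresponding fact for $P$.

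For $P_X$ I would then treat the clauses in turn. The primary, implicational, bounded and Boolean cases are immediate: the required operations exist in each $P(X\times B)$ because they exist in $P$, and naturality amounts to the $P_X$-reindexings---being $P$-reindexings---preserving those operations, which holds since $P$ enjoys the corresponding structure. For the elementary case I would define the fibered equality on $B$ as $\delta^X_B:=P(\pr2)(\delta_B)=(\mathfrak f_X)_{B\times B}(\delta_B)\in P(X\times B\times B)$ and verify the three conditions of \Cref{def:elem} by pulling them back along the relevant projections to the corresponding inequalities for $\delta_B$ in $P$; condition \ref{i:3-equality}.\ (the $\boxtimes$-compatibility) is the fiddliest bookkeeping. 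For the existential and universal cases I would simply reuse the adjoints of $P$, setting the left (resp.\ right) adjoint to $P_X(\pr1)=P(q)$ to be $\exists^B_{X\times C}$ (resp.\ $\forall^B_{X\times C}$); Beck--Chevalley and Frobenius for $P_X$ then follow by rewriting the relevant $\CC_X$-squares as $\CC$-squares and invoking the same properties in $P$.

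The morphism $(F_X,\mathfrak f_X)$ is handled at once, since each component $(\mathfrak f_X)_A=P(\pr2)$ is a reindexing of $P$: it preserves meets, top, bottom, implication and joins because $P$ does, it sends $\delta_B$ to $\delta^X_B$ by the very definition above, and it commutes with the quantifiers because the square expressing the desired equality is a pullback in $\CC$, so Beck--Chevalley in $P$ applies.

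Finally, for $(G',\mathfrak g')$ I would work from its explicit description furnished by \Cref{coroll:const}: on objects $G'=G$, on an arrow $f\colon A\rightsquigarrow B$ the functor substitutes the chosen constant $c$ into the $X$-slot, $G'(f)=G(f)\circ\ple{c\circ\,!_{G(A)},\id{G(A)}}$, and $\mathfrak g'_B=R(\sigma_B)\circ\mathfrak g_{X\times B}$, where $\sigma_B:=\ple{c\circ\,!_{G(B)},\id{G(B)}}\colon G(B)\to G(X)\times G(B)=G(X\times B)$. Since $\mathfrak g$ preserves the structure by hypothesis and $R(\sigma_B)$ is a reindexing of $R$---hence preserves meets, top, bottom, implication and joins, and satisfies Beck--Chevalley---the composite $\mathfrak g'$ preserves all of it. I expect the genuine obstacle to be exactly here, in the existential and universal clauses for $(G',\mathfrak g')$: preserving a quantifier is not a fibrewise identity but an adjoint-commutation, so one must combine the preservation of $\exists^B_{X\times C}$ by $\mathfrak g$ with a Beck--Chevalley square in $R$ for the substitution arrow $\sigma$, checking that $\sigma_C\times\id{G(B)}$ and $\sigma_{C\times B}$ agree so that the two halves fit together. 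The elementary clause, where one must show that $\mathfrak g'_{B\times B}(\delta^X_B)$ is the fibered equality of $R$ on $G'(B)$, is the second most delicate point.
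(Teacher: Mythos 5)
The paper gives no in-text proof of this proposition: it is stated as an instance of Propositions 5.3--5.12 and Theorem 6.3 of \cite{guff2}, so there is nothing to compare line by line. Your reconstruction is essentially the expected argument and is sound: the identification $P_X(B)=P(X\times B)$, with reindexings of the form $P(\ple{\pr1,f})$, makes the primary, implicational, bounded and Boolean clauses fibrewise trivialities; products in the Kleisli category coincide with those of $\CC$, so reusing $\exists^B_{X\times C}$ and $\forall^B_{X\times C}$ as the adjoints for $P_X$ and transferring Beck--Chevalley and Frobenius by rewriting $\CC_X$-squares as $\CC$-squares is correct (I checked the shape of $P_X(f\times\id{B})$: it is $P(f'\times\id{B})$ for $f'=\ple{\pr1,f\,\pr{12}}$, so the $P$-condition applies verbatim); the components $(\mathfrak f_X)_A=P(\pr2)$ commute with the quantifiers by Beck--Chevalley for $\pr2\times\id{B}$, as you say; and your explicit description $G'(f)=G(f)\circ\ple{c\circ !,\id{}}$, $\mathfrak g'_B=R(\sigma_B)\circ\mathfrak g_{X\times B}$, with the compatibility $\sigma_C\times\id{GB}=\sigma_{C\times B}$ (which holds because $!_{GC}\circ\pr1=!_{GC\times GB}$), is exactly what makes the quantifier clauses for $(G',\mathfrak g')$ go through. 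Incidentally, the elementary clause for $(G',\mathfrak g')$ is less delicate than you anticipate: $\mathfrak g'_{B\times B}(\delta^X_B)=R(\sigma_{B\times B})R(G\pr2)\,\mathfrak g_{B\times B}(\delta_B)=\delta_{GB}$ immediately, since $G(\pr2)\circ\sigma_{B\times B}=\id{}$.

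One step is understated to the point of failing as literally described: in the elementary clause for $P_X$, condition \ref{i:2-equality}.\ of \Cref{def:elem} at $B$ quantifies over \emph{all} $\alpha\in P(X\times B)$, and it cannot be obtained by ``pulling back the corresponding inequality for $\delta_B$'': reindexing the descent inequality for elements of $P(B)$ only covers $\alpha$ of the form $P(\pr2)\beta$. The correct route is to use condition \ref{i:2-equality}.\ for $P$ at the object $X\times B$, namely $P(\pr1)\alpha\land\delta_{X\times B}\le P(\pr2)\alpha$ in $P\big((X\times B)\times(X\times B)\big)$, reindex along the map $(x,b,b')\mapsto(x,b,x,b')$, and then compute the restriction of $\delta_{X\times B}$ along this map: by \Cref{rmk:aeq} one has $\delta_{X\times B}=P(\ple{\pr1,\pr3})\delta_X\land P(\ple{\pr2,\pr4})\delta_B$, and since $\top\le P(\Delta_X)\delta_X$ the first conjunct restricts to $\top$, leaving exactly your candidate $\delta^X_B=P(\pr2)\delta_B$. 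With this fix the candidate equality predicate satisfies all three conditions (conditions \ref{i:1-equality}.\ and \ref{i:3-equality}.\ really are direct reindexings of their $P$-counterparts), and the rest of your proof stands.
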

\begin{proposition}\label{prop:ax}
Let $P:\CC\op\to \Pos$ be a primary doctrine, and let $P_\varphi$, $(\id{\CC},\mathfrak{f}_\varphi):P\to P_\varphi$ be defined as in \eqref{def:ax}. Then if $P$ is elementary (resp.\ existential, universal, implicational, bounded, Boolean), then $P_\varphi$, and $(\id{\CC},\mathfrak{f}_\varphi)$ are elementary (resp.\ existential, universal, implicational, bounded, Boolean).
Moreover let $R$, $(G,\mathfrak{g}):P\to R$ be the doctrines and a morphism with the same assumption of \Cref{coroll:ax} above, and $(G,\mathfrak{g}'):P_\varphi\to R$ be the morphism defined by the same theorem. Then if $P$, $R$ and $(G,\mathfrak{g})$ are elementary (resp.\ existential, universal, implicational, bounded, Boolean), then also $(G,\mathfrak{g}')$ is elementary (resp.\ existential, universal, implicational, bounded, Boolean).\end{proposition}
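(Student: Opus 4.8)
The plan is to exploit that each fiber $P_\varphi(A)$ is the principal down-set below $\top^A_\varphi:=P(!_A)\varphi$, with order and binary meets computed exactly as in $P(A)$. Note first that reindexing preserves this top, since $P(f)\big(\top^B_\varphi\big)=P(f)P(!_B)\varphi=P(!_A)\varphi=\top^A_\varphi$ because $!_B\circ f=!_A$. The single tool that transports structure from $P$ to $P_\varphi$ is the map $L_A\colon P(A)\to P_\varphi(A)$, $L_A(\alpha):=\alpha\land\top^A_\varphi$, which sends $\alpha$ to the largest element of $P_\varphi(A)$ below it---equivalently, the right adjoint to the inclusion $P_\varphi(A)\hookrightarrow P(A)$. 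For every operation of $P$ I would define the corresponding operation of $P_\varphi$ by post-composing with $L$, and then verify the defining (in)equalities by inheriting them from $P$ and meeting with $\top_\varphi$.

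Concretely: the fibered equality is $\delta^\varphi_A:=\delta_A\land\top^{A\times A}_\varphi$, and the three clauses of \Cref{def:elem} follow from the corresponding clauses for $\delta_A$ after intersecting with $\top_\varphi$, using that reindexing preserves $\top_\varphi$ and \Cref{lemma:aeq}. The existential quantifier needs no correction: for $\alpha\le\top^{C\times B}_\varphi=P(\pr1)\top^C_\varphi$ the adjunction $\exists^B_C\dashv P(\pr1)$ gives $\exists^B_C\alpha\le\top^C_\varphi$, so $\exists^B_C$ restricts to $P_\varphi$, and Beck--Chevalley and Frobenius are the same equalities as in $P$. By contrast the universal quantifier must be corrected, $\forall^{B,\varphi}_C:=L_C\circ\forall^B_C$, since $\forall^B_C\alpha$ need not lie below $\top^C_\varphi$; the adjunction $P_\varphi(\pr1)\dashv\forall^{B,\varphi}_C$ and its Beck--Chevalley condition then follow because meeting the right adjoint with $\top^C_\varphi$ is invisible to elements of $P_\varphi(C)$, which already lie below $\top^C_\varphi$. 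Implication is $\alpha\to_\varphi\beta:=(\alpha\to\beta)\land\top^A_\varphi$, with cartesian closure again reading off from that of $P(A)$; the bottom element is simply $\bot_A$ (which lies in $P_\varphi(A)$), finite joins are inherited verbatim, and in the Boolean case the corrected negation $\lnot_\varphi\alpha=\lnot\alpha\land\top^A_\varphi$ is checked to be involutive on $P_\varphi(A)$ using distributivity and $\lnot\top^A_\varphi\land\top^A_\varphi=\bot$.

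For the morphism $(\id{\CC},\mathfrak f_\varphi)$ the recurring computation is that $(\mathfrak f_\varphi)_A$ turns any operation of $P$ into that same operation met with $\top^A_\varphi$, which must be matched against the corrected operation evaluated on the $\mathfrak f_\varphi$-images; the identity $\top^{C\times B}_\varphi=P(\pr1)\top^C_\varphi$ reduces each case to a law already available in $P$. Preservation of $\exists$ uses Frobenius reciprocity, preservation of $\land,\bot,\top$ and $\lor$ is immediate (the last by distributivity), and preservation of $\to$ uses the Heyting identities $(t\land a)\to(t\land b)=t\to(a\to b)$ and $t\land(t\to c)=t\land c$ with $t=\top^A_\varphi$. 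I expect the genuinely delicate point to be preservation of $\forall$: the equality $\forall^B_C(P(\pr1)t\land\alpha)=t\land\forall^B_C\alpha$ fails in a general universal doctrine, but only the direction that survives after meeting with $\top^C_\varphi$ is needed, and that direction is exactly the one obtainable from the counit $P(\pr1)\forall^B_C\alpha\le\alpha$. This is the main obstacle and the one place where one must argue rather than merely restrict.

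Finally, for the induced morphism $(G,\mathfrak g')\colon P_\varphi\to R$ from \Cref{coroll:ax}, the equation $(G,\mathfrak g')\circ(\id{\CC},\mathfrak f_\varphi)=(G,\mathfrak g)$ together with $(\mathfrak f_\varphi)_A\alpha=\top^A_\varphi\land\alpha=\alpha$ for $\alpha\in P_\varphi(A)$ shows that $\mathfrak g'_A$ is just the restriction of $\mathfrak g_A$ to $P_\varphi(A)$. The key lemma here is that the hypothesis $\top\le\mathfrak g_\tmn(\varphi)$ forces $\mathfrak g_\tmn(\varphi)=\top$, whence $\mathfrak g_A(\top^A_\varphi)=\mathfrak g_A P(!_A)\varphi=R(G(!_A))\mathfrak g_\tmn(\varphi)=\top$ by naturality. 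Since every operation of $P_\varphi$ is the corresponding operation of $P$ met with $\top_\varphi$, applying $\mathfrak g'=\mathfrak g\restriction_{P_\varphi}$ turns the $\top_\varphi$-factor into $\top$ and leaves the $P$-operation, which $\mathfrak g$ already preserves by hypothesis; preservation of each piece of structure by $(G,\mathfrak g')$ then follows uniformly, exactly as the analogous statement for the constant construction is handled in \Cref{prop:const}.
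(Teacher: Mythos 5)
Your proposal is correct. Note that the paper does not prove \Cref{prop:ax} in-text at all: it is stated as an instance of Propositions 5.3--5.12 and Theorem 6.3 of \cite{guff2}, so there is no internal argument to compare against, and your proof supplies precisely the fiberwise verification those citations stand for. Your organizing device---the coreflection $L_A(\alpha)=\alpha\land P(!_A)\varphi$, right adjoint to the inclusion $P_\varphi(A)\hookrightarrow P(A)$, with each corrected operation defined as the $P$-operation post-composed with $L$---is sound and makes the case analysis uniform, and you correctly isolate the only two places where plain restriction fails: the universal quantifier, where $\forall^{B,\varphi}_C=L_C\circ\forall^B_C$ and the nontrivial inequality $P(!_C)\varphi\land\forall^B_C\alpha\leq\forall^B_C\big(P(\pr1)P(!_C)\varphi\land\alpha\big)$ follows by adjunction from the counit $P(\pr1)\forall^B_C\alpha\leq\alpha$ (the converse being monotonicity of $\forall^B_C$); and preservation of implication by $\mathfrak f_\varphi$, via the identities $(t\land a)\to(t\land b)=t\to(a\to b)$ and $t\land(t\to c)=t\land c$, both valid in any cartesian closed poset with finite meets, so no Boolean assumption is smuggled in. The second half of the statement is likewise handled correctly: $\top\leq\mathfrak g_\tmn(\varphi)$ forces $\mathfrak g_A(P(!_A)\varphi)=\top_{GA}$ by naturality of $\mathfrak g$ and of $\top$ in $R$, and $(\mathfrak f_\varphi)_A\alpha=\alpha$ for $\alpha\in P_\varphi(A)$ identifies $\mathfrak g'_A$ with the restriction of $\mathfrak g_A$, after which every preservation claim for $\mathfrak g'$ reduces to the corresponding one for $\mathfrak g$ plus $\mathfrak g_A(P(!_A)\varphi)=\top_{GA}$. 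One cosmetic remark: \Cref{lemma:aeq} is not actually needed for elementarity---since reindexing preserves both $\land$ and $P(!)\varphi$, the inequality $\delta^\varphi_A\boxtimes\delta^\varphi_B\leq\delta^\varphi_{A\times B}$ follows directly from the corresponding clause of \Cref{def:elem} in $P$.
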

%

\section{Build a rich doctrine}\label{sect:dir_col}

The ultimate goal of the paper is to provide some conditions on a doctrine $P:\CC\op\to\Pos$ in order to admit a doctrine morphism in the doctrine $\pws:\Set_{\ast}\op\to\Pos$. The latter doctrine is a variation of the doctrine of subsets $\mathscr{P}$, where the only difference is that we remove the empty set from the base category. This corresponds to the fact that we only want to consider non-empty models.

We do not assume that the doctrine $P$ has any specific structure at this time, but we add the necessary properties as we proceed through the section. Every structural property that we add to $P$, is then asked to be preserved by the doctrine morphism involved. Similarly to what happens in the proof of Henkin's Theorem \cite{henkin}, we split in two the problem of finding a model: Henkin's idea is to at first extend the language and the theory in order to obtain a rich theory, and then to show that one can easily define a model of a rich theory, defining a suitable interpretation on the set of closed terms. We do a similar thing, providing at first in this section some doctrine morphism $P\to \underrightarrow{P}$, where $\underrightarrow{P}$ is rich in the sense of \Cref{def_rich} below, and then we ``easily" define a model of a rich doctrine in \Cref{sect:mod_rch}.
\begin{definition}\label{def_rich}
Let $R:\mathbb{D}^{\op}\to\Pos$ be an existential doctrine. The doctrine $R$ is \emph{rich} if for all $A\in\mathrm{ob}\mathbb{D}$ and for all $\sigma\in R(A)$ there exists a $\mathbb{D}$-arrow $d:\tmn\to A$ such that
\begin{equation}\label{eq:rich}\exists^A_\tmn\sigma\leq R(d)\sigma.\end{equation}
\end{definition}
\begin{remark}
For every object $A$ in the base category of a rich doctrine, there exists an arrow from the terminal object to $A$.
\end{remark}
\begin{remark}
Observe that the condition \eqref{eq:rich} in \Cref{def_rich} is actually an equality. Indeed, to prove the converse direction, it is enough to apply the reindexing $R(d)$ to the inequality $\sigma\leq R(!_A)\exists^A_\tmn\sigma$, that holds by adjunction.
\end{remark}
\begin{example}
The subsets doctrine $\mathscr{P}:\Set\op\to\Pos$ is not rich, since there exists no arrow $\tmn\to\emptyset$.
However, we can remove the empty set from the base category and consider the doctrine $\pws :\Set_{\ast}\op\to\Pos$, which is rich. Another example of a rich doctrine will be provided in \Cref{ex:real}.
\end{example}
\begin{remark}
Recall from \cite{triposes} that an existential doctrine $R:\mathbb{D}\op\to\Pos$ is \emph{equipped with $\varepsilon$-operator} if for every $A,B\in\mathrm{ob}\mathbb{D}$ and every $\alpha\in R(B\times A)$ there is an arrow $\varepsilon_\alpha: B\to A$ such that
\[\exists^A_B\alpha=R(\ple{\id{B},\varepsilon_\alpha})\alpha\]
in $R(A)$\footnote{Actually, in \cite{triposes}, the definition of a doctrine equipped with $\varepsilon$-operator is given for existential \emph{elementary} doctrine. Since the equality is not involved in the definition, we can provide the definition of $\varepsilon$-operator for existential doctrines.}. \Cref{def_rich} is similar, except for the fact that we ask the condition above to hold not for any object $B$ in the base category but only for $B=\tmn$.
\end{remark}
In order to define a rich doctrine $\underrightarrow{P}$, we need a middle step $P\to\underline{P}\to\underrightarrow{P}$, where in the doctrine $\underline{P}$ we add a suitable amount of constant to the doctrine $P$. To achieve this result, colimits over directed preorder in the category of doctrines are needed; moreover, if every doctrine in the image of the diagram has a property (such as being primary, implicational, elementary, existential, \dots), preserved by the morphisms in the diagram, then the colimit has the same property.
\begin{proposition}\label{prop:dir_colim}
The category $\Dott$ has colimits over directed preorders.
\end{proposition}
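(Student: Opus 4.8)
The plan is to build the colimit explicitly, by first taking the colimit of the base categories and then gluing the fibres. Write the diagram as $D\colon J\to\Dott$ with $D(j)=\big(P_j\colon\CC_j\op\to\Pos\big)$ and, for $i\le j$, $D(i\le j)=(F_{ij},\mathfrak f_{ij})\colon P_i\to P_j$.

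First I would compute the base category $\CC_\infty:=\varinjlim_{j}\CC_j$ in $\Cat$. Directed colimits of categories are computed on objects and hom-sets, so that an object of $\CC_\infty$ is an object of some $\CC_j$ up to the identification generated by the $F_{ij}$, and $\Hom_{\CC_\infty}([A],[B])=\varinjlim_{k}\Hom_{\CC_k}(A_k,B_k)$, where $A_k,B_k$ denote the images of chosen representatives along the transition functors. Because filtered colimits commute with finite limits in $\Set$ and each $F_{ij}$ preserves finite products, $\CC_\infty$ has finite products: any finite family of objects can be represented at a common stage $k$ by directedness, its product taken in $\CC_k$, and the class of this product is the product in $\CC_\infty$; independence of the chosen stage and the universal property are checked by pushing the data further along the system. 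In particular the canonical functors $F_j\colon\CC_j\to\CC_\infty$ preserve finite products and form a colimit cocone in the category of finite-product categories.

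Next I would define the fibres of $P_\infty\colon\CC_\infty\op\to\Pos$. For $[A]$ represented by $A\in\CC_j$, set $P_\infty([A]):=\varinjlim_{k\ge j}P_k(A_k)$, the directed colimit in $\Pos$ of the posets $P_k(A_k)$ along the monotone transition maps $(\mathfrak f_{kl})_{A_k}\colon P_k(A_k)\to P_l(A_l)$. Concretely an element is a class $[\alpha]$ of some $\alpha\in P_k(A_k)$, and $[\alpha]\le[\beta]$ holds iff the two elements become comparable at some later stage; directedness makes this relation antisymmetric, so $P_\infty([A])$ is genuinely a poset. A morphism $[f]\colon[A]\to[B]$ is represented by some $f_j\colon A_j\to B_j$, and the reindexings $P_k(f_k)$ assemble, in the colimit, into a monotone map $P_\infty([f])\colon P_\infty([B])\to P_\infty([A])$. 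The bulk of the verification is here: I must check that $P_\infty([f])$ is independent of the representative $f_j$, that it is functorial in $[f]$, and that $P_\infty$ is well defined on classes of objects---all of which reduce, via directedness, to equalities that already hold at a sufficiently large stage. The colimit cocone then consists of the morphisms $(F_j,\mathfrak f_j)\colon P_j\to P_\infty$, where $(\mathfrak f_j)_A\colon P_j(A)\to P_\infty([A])$ is the colimit injection into the fibre; naturality of $\mathfrak f_j$ and compatibility with the $D(i\le j)$ are immediate from the construction.

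Finally I would verify the universal property. Given a doctrine $R\colon\ct{D}\op\to\Pos$ together with a compatible cocone $(G_j,\mathfrak g_j)\colon P_j\to R$, the functor $H\colon\CC_\infty\to\ct{D}$ is induced by the universal property of $\CC_\infty=\varinjlim_j\CC_j$ in $\Cat$, and it preserves finite products because each $G_j$ does and products in $\CC_\infty$ live at finite stages. On fibres, the cocone maps $(\mathfrak g_k)_{A_k}\colon P_k(A_k)\to R(G_k(A_k))$ are compatible with the transition maps, so the universal property of $P_\infty([A])=\varinjlim_k P_k(A_k)$ in $\Pos$ yields a unique monotone $\mathfrak h_{[A]}\colon P_\infty([A])\to R(H([A]))$; these assemble into a natural transformation $\mathfrak h\colon P_\infty\to R\circ H\op$, giving the required morphism $(H,\mathfrak h)\colon P_\infty\to R$ factoring the cocone. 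Uniqueness is forced stagewise---any factorisation must agree with $(G_j,\mathfrak g_j)$ on each image, hence on representatives, hence everywhere---so it holds up to a unique natural isomorphism, exactly as in the universal properties of \Cref{coroll:const,coroll:ax}. I expect the main obstacle to be precisely the well-definedness and functoriality of reindexing in the fibre construction above, where the two nested colimits (of bases and of fibres) interact and every construction must be shown independent of the representatives chosen at each finite stage.
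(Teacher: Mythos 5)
Your proposal matches the paper's proof essentially step for step: the paper likewise builds the base category as the directed colimit in $\Cat$ (objects and hom-sets as equivalence classes of representatives at a common stage, finite products computed at a sufficiently large stage since the transition functors preserve them), defines each fiber as the directed colimit in $\Pos$ of the fibers along the $\mathfrak{f}_{ij}$, defines reindexing on representatives and checks independence of all choices, and verifies the universal property stagewise. The only cosmetic difference is that the paper's stagewise argument gives strict uniqueness of the induced morphism $(G,\mathfrak{g})$ rather than merely uniqueness up to natural isomorphism, which is in fact what your own argument delivers as well.
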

\begin{proposition}\label{prop:addit_struct}
Let $I$ be a directed preorder, let $D:I\to\Dott$ be a diagram, $D(i\leq j)=[(F_{ij},\mathfrak{f}_{ij}):P_i\to P_j]$ for any $i,j\in I$, and let $(P_\bullet,\{(F_i,\mathfrak f _i)\}_{i\in I})$ be the colimit of $D$. Suppose that for every $i,j\in I$, the doctrine $P_i$ and the morphism $(F_{ij},\mathfrak{f}_{ij})$ are primary. Then the doctrine $P_\bullet$ is a primary doctrine, and for every $i\in I$ the morphism $(F_i,\mathfrak f _i)$ is primary.
Moreover, if in a cocone $(R,\{(G_i,\mathfrak g _i)\}_{i\in I})$, $R$ and $(G_i,\mathfrak g _i)$ are primary, then the unique arrow $(G,\mathfrak g):P_\bullet\to R$ defined by the universal property of the colimit is primary.
The same statement holds if we write respectively bounded, with binary joins, implicational, elementary, existential, universal, Heyting, Boolean instead of primary.
\end{proposition}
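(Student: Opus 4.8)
The plan is to use an explicit pointwise description of the directed colimit $(P_\bullet,\{(F_i,\mathfrak f_i)\})$ furnished by \Cref{prop:dir_colim}, and then to lift each piece of logical structure through the two defining features of directed (filtered) colimits in $\Pos$ and $\Cat$: every object, arrow, or fiber element is represented at some stage $i\in I$, and an equality or inequality between such representatives holds in the colimit exactly when it is already witnessed at some later stage $k\geq i,j$. First I would record that the base category $\CC_\bullet$ is the directed colimit of the $\CC_i$ in $\Cat$, so that finite products in $\CC_\bullet$ are computed at stages and the distinguished arrows ($\pr1$, $\pr2$, diagonals $\Delta_A$, tuples $\ple{-,-}$) used in the structural definitions all come from a single stage; and that each fiber $P_\bullet(A)$ is the directed colimit in $\Pos$ of the fibers $P_i(A_i)$ over representatives $A_i$ of $A$, with reindexing computed stagewise and the order on $P_\bullet(A)$ being the stagewise order.

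For the primary case I would define the operations on $P_\bullet$ by representatives: given $[\alpha],[\beta]\in P_\bullet(A)$, directedness lets me choose a common stage $i$ with representatives $\alpha,\beta\in P_i(A_i)$, and I set $[\alpha]\land[\beta]:=[\alpha\land_{P_i}\beta]$ and $\top:=[\top_{P_i}]$. Well-definedness is exactly where the hypothesis that the transition morphisms $(F_{ij},\mathfrak f_{ij})$ are primary is used: since each $\mathfrak f_{ij}$ preserves $\land$ and $\top$, passing to a later stage $k$ sends both candidate results to the same class, so the choice of stage and of representatives is immaterial. I would then check that $[\alpha]\land[\beta]$ is genuinely the greatest lower bound, which is immediate because the order is stagewise and $\land_{P_i}$ is the meet in $P_i(A_i)$, and that $\land$ and $\top$ are natural, which again holds stage by stage because it holds in each primary $P_i$. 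That the cocone legs $(F_i,\mathfrak f_i)$ are primary is built into the construction, since $(\mathfrak f_i)$ sends an element to its class and meets and tops are computed on representatives.

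For the comparison morphism $(G,\mathfrak g)$ I would use the factorisation $(G,\mathfrak g)\circ(F_i,\mathfrak f_i)=(G_i,\mathfrak g_i)$ coming from the universal property: a class $[\alpha]$ with representative $\alpha\in P_i(A_i)$ is sent to $(\mathfrak g_i)_{A_i}(\alpha)$, and since each $(G_i,\mathfrak g_i)$ preserves $\land$ and $\top$ and the operations on $P_\bullet$ are computed on representatives, $(G,\mathfrak g)$ preserves them too; independence of the choice of stage follows from the cocone compatibility $(G_j,\mathfrak g_j)\circ(F_{ij},\mathfrak f_{ij})=(G_i,\mathfrak g_i)$. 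The remaining structures are handled by the same template, because each of them is \emph{finitary}: it is specified by finitely many fiberwise operations or distinguished elements (for $\bot$, $\lor$, $\to$; for the fibered equality $\delta_A$ in the elementary case; for the adjoints $\exists^B_C$, $\forall^B_C$), each of which I would define on representatives, and it is governed by (in)equalities, adjunction laws, Beck--Chevalley and Frobenius conditions that are all relations between fiber elements. Each such relation pulls back to a single stage where it already holds, and then pushes into the colimit.

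The step I expect to be the main obstacle is the transfer of the \emph{adjunctions} — cartesian closedness in the implicational case and the quantifiers $\exists^B_C\dashv P(\pr1)\dashv\forall^B_C$ in the existential and universal cases — together with the descent condition $P(A)=\des_{\delta_A}$ of \Cref{def:elem}, since these a priori quantify over \emph{all} elements of a fiber rather than over finitely many. The resolution is uniform: because $P_\bullet(A)$ is a directed colimit in $\Pos$, any finite configuration of elements appearing in an adjunction triangle, a Frobenius or Beck--Chevalley equation, or a descent inequality is represented at a common stage, and the corresponding relation holds in $P_\bullet$ iff it is witnessed at some later stage; since it holds at every stage by hypothesis, it holds in the colimit. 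Defining $\exists^B_C$, $\forall^B_C$, and $\to$ on representatives and verifying their universal property in this way completes the argument, uniformly across all the listed structures.
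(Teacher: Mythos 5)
Your proposal is correct and takes essentially the same route as the paper's own proof: you define each operation and distinguished element ($\land$, $\top$, $\bot$, $\lor$, $\to$, $\delta$, $\exists$, $\forall$) on representatives brought to a common stage, obtain well-definedness from the hypothesis that the transition morphisms $(F_{ij},\mathfrak f_{ij})$ preserve the structure, and verify the adjunction laws, Beck--Chevalley, Frobenius, and the descent condition by the principle that an (in)equality in a directed colimit of posets holds iff it is witnessed at some later stage. Your handling of the a priori non-finitary adjunction conditions matches the paper's argument exactly---for instance the paper shows $[\gamma]\leq[\alpha]\to[\beta]$ in $P_\bullet$ iff the corresponding meet inequality is witnessed in some $P_s(F_{is}A_{(i)})$---so there is nothing to correct.
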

The proofs of both propositions can be found in \Cref{sub:dir_colim}.

\subsection{The construction of the directed colimit $\underline{P}$}\label{sect:dir_colim}
\begin{center}
\fbox{\parbox{.5\textwidth}{\raggedright From now on, $P:\CC\op\to\Pos$ is a fixed doctrine, unless otherwise specified.}}
\end{center}
{\bf The directed preorder $J$:}
For a fixed cardinal $\Lambda\neq 0$, define $J$ the set of finite lists with different entries with values in $\{(X,\lambda)\}_{X\in\text{ob}\CC,\lambda\in\Lambda}$. We ask the empty list to belong to $J$. Define a preorder in $J$ as follows:
\begin{equation*}\big((X_1,x_1),\dots,(X_n,x_n)\big)\leq\big((Y_1,y_1),\dots,(Y_m,y_m)\big)\end{equation*}
if and only if
\begin{equation*}\big\{(X_1,x_1),\dots,(X_n,x_n)\big\}\subseteq\big\{(Y_1,y_1),\dots,(Y_m,y_m)\big\}.\end{equation*}
Whenever we have $\bar{X}\leq\bar{Y}$ in $J$, there exists a unique function $\tau:\{1,\dots,n\}\to\{1,\dots,m\}$  induced by the inclusion such that $(X_i,x_i)=(Y_{\tau(i)},y_{\tau(i)})$ for all $i=1,\dots,n$.

Observe that $J$ is a directed preorder: given $\bar{X},\bar{Y}\in J$, define the list $\bar{Z}$ to be the juxtaposition of $\bar{X}$ with all the entries of $\bar{Y}$ that do not appear in $\bar{X}$; then $\bar{X}\leq\bar{Z}\geq\bar{Y}$.

On a side note, we point out that we will not study the case $J=\emptyset$, since this would imply the category $\CC$ to have no object.
\pointinproof{The diagram $D:J\to\Dott$}
Define the following diagram on $J$:
\[\begin{tikzcd}
	J &&& \Dott \\
	\emptyset &&& {P:\CC\op\to\Pos} \\
	{\bar{X}=\big((X_1,x_1),\dots,(X_n,x_n)\big)} &&& {P_{\Pi_{a=1}^nX_a}:\CC_{\Pi_{a=1}^nX_a}\op\to\Pos} \\
	{\bar{Y}=\big((Y_1,y_1),\dots,(Y_m,y_m)\big)} &&& {P_{\Pi_{b=1}^mY_b}:\CC_{\Pi_{b=1}^mY_b}\op\to\Pos}
	\arrow[from=1-1, to=1-4, "D"]
	\arrow[maps to, from=2-1, to=2-4]
	\arrow["\leq"{marking}, draw=none,, from=2-1, to=3-1]
	\arrow["\leq"{marking}, draw=none,, from=3-1, to=4-1]
	\arrow[maps to, from=3-1, to=3-4]
	\arrow[maps to, from=4-1, to=4-4]
	\arrow["{(F_{\bar{X}\bar{Y}},\mathfrak{f}_{\bar{X}\bar{Y}})}", from=3-4, to=4-4]
	\arrow["{(F_{\bar{X}},\mathfrak{f}_{\bar{X}})}", from=2-4, to=3-4]
\end{tikzcd}\]
where:
\begin{itemize}
\item $\CC_{\Pi_{a=1}^nX_a}$ has the same objects of $\CC$ and an arrow $A\rightsquigarrow B$ from $A$ to $B$ is actually a \CC-arrow $\prod_{a=1}^nX_a\times A\to B$;
\item $P_{\Pi_{a=1}^nX_a}(A)=P(\prod_{a=1}^nX_a\times A)$, with definition on arrows as in \eqref{def:const};
\item $F_{\bar{X}}\big(f:A\to B\big)=\big(f\circ\pr{A}:A\rightsquigarrow B\big)$ is the composition $\prod_{a=1}^nX_a\times A\to A\to B$;
\item $(\mathfrak{f}_{\bar{X}})_A:P(A)\to P_{\Pi_{a=1}^nX_a}(A)=P(\prod_{a=1}^nX_a\times A)$ is the reindexing along the projection over $A$;
\item $F_{\bar{X}\bar{Y}}\big(f:A\rightsquigarrow B\big)=\big(f\circ(\ple{ \pr{\tau(1)},\dots,\pr{\tau(n)}}\times \id{A}):A\rightsquigarrow B\big)$ is the following composition $\prod_{b=1}^mY_b\times A\to \prod_{a=1}^nX_a\times A\to B$. Here $\ple{ \pr{\tau(1)},\dots,\pr{\tau(n)}}$ is the projection on the corresponding components from $\prod_{b=1}^mY_b$ to $\prod_{a=1}^nX_a$, since $X_i$ appears as the $\tau(i)$-th component of $\bar{Y}$;
\item $(\mathfrak{f}_{\bar{X}\bar{Y}})_A:P(\prod_{a=1}^nX_a\times A)\to P(\prod_{b=1}^mY_b\times A)$ is defined as the reindexing along the map $\ple{ \pr{\tau(1)},\dots,\pr{\tau(n)}}\times \id{A}$.
\end{itemize}
For any $\emptyset\leq\bar{X}\leq\bar{Y}$ observe that the composition $(F_{\bar{X}\bar{Y}},\mathfrak{f}_{\bar{X}\bar{Y}})(F_{\bar{X}},\mathfrak{f}_{\bar{X}})$ is $(F_{\bar{Y}},\mathfrak{f}_{\bar{Y}})$. Indeed, between the base categories we have:
\begin{equation*}F_{\bar{X}}:\bigg(f:A\to B\bigg)\mapsto\bigg(f\pr{A}:\prod_{a=1}^nX_a\times A\to B\bigg)\end{equation*}
and then
\begin{equation*}F_{\bar X\bar Y}:f\pr{A}\mapsto\bigg(f\pr{A}\circ(\ple{ \pr{\tau(1)},\dots,\pr{\tau(n)}}\times \id{A}):\prod_{b=1}^mY_b\times A\to B\bigg)=\bigg(f\pr{A}:\prod_{b=1}^mY_b\times A\to B\bigg),\end{equation*}
so $F_{\bar X\bar Y}F_{\bar X}=F_{\bar Y}$.
Moreover $(\mathfrak{f}_{\bar{X}\bar{Y}})_A(\mathfrak{f}_{\bar{X}})_A=P(\ple{ \pr{\tau(1)},\dots,\pr{\tau(n)}}\times \id{A})P(\pr{A})=P(\pr{A})=(\mathfrak{f}_{\bar{Y}})_A$. Observe that both equalities follow from the fact that $\pr{A}\circ(\ple{ \pr{\tau(1)},\dots,\pr{\tau(n)}}\times \id{A})=\pr{A}$. 

Similarly, for any $\bar{X}\leq\bar{Y}\leq\bar{Z}$ with induced functions respectively  $\tau:\{1,\dots,n\}\to\{1,\dots,m\}$ and $\tau':\{1,\dots,m\}\to\{1,\dots,s\}$, we observe that the composition $(F_{\bar{Y}\bar{Z}},\mathfrak{f}_{\bar{Y}\bar{Z}})(F_{\bar{X}\bar{Y}},\mathfrak{f}_{\bar{X}\bar{Y}})$ is $(F_{\bar{X}\bar{Z}},\mathfrak{f}_{\bar{X}\bar{Z}})$ using the fact that
\begin{equation*}(\ple{ \pr{\tau(1)},\dots,\pr{\tau(n)}}\times \id{A})\circ(\ple{ \pr{\tau'(1)},\dots,\pr{\tau'(m)}}\times \id{A})=(\ple{ \pr{\tau'\tau(1)},\dots,\pr{\tau'\tau(n)}}\times \id{A}).\end{equation*}
So $D:J\to\Dott$ is indeed a diagram. 
\pointinproof{The colimit of $D$}
Let $\underline{P}:\underline{\CC}\op\to\Pos$ be the colimit of $D$ in $\Dott$---we refer to the proof of \Cref{prop:dir_colim_app} in \Cref{sub:dir_colim} to know the details of how it is computed. Objects in the base category are the same as \CC, since $F_{\bar{X}\bar{Y}}$'s act like the identity on objects. An arrow $\ecc{(f,\bar{X})}$ in $\Hom_{\underline\CC}(A,B)$---we write $\ecc{(f,\bar{X})}:A\dashrightarrow B$---is the equivalence class of an arrow $f:\prod_{a=1}^nX_a\times A \to B$ for some fixed $\bar{X}=\big((X_1,x_1),\dots,(X_n,x_n)\big)\in J$. One has $\ecc{(f,\bar{X})}=\ecc{(f',\bar{Y})}$, for some $f':\prod_{b=1}^mY_b\times A \to B$ with $\bar{Y}=\big((Y_1,y_1),\dots,(Y_m,y_m)\big)\in J$ if and only if there exists $\bar{Z}\in J$ such that $\bar{X}\leq\bar{Z}\geq\bar{Y}$ making the following diagram commute:
\[\begin{tikzcd}
	&& {\prod_{a=1}^nX_a\times A} \\
	{\prod_{c=1}^sZ_c\times A} && 
 && B \\
	&& {\prod_{b=1}^mY_b\times A}
	\arrow["{\ple{ \pr{\tau(1)},\dots,\pr{\tau(n)}}\times \id{A}}", from=2-1, to=1-3]
	\arrow["f", from=1-3, to=2-5]
	\arrow["{\ple{ \pr{\tau'(1)},\dots,\pr{\tau'(m)}}\times \id{A}}"', from=2-1, to=3-3]
	\arrow["{f'}"', from=3-3, to=2-5]
\end{tikzcd}.\]
Here $\tau$ and $\tau'$ are induced by $\bar{X}\leq\bar{Z}$ and $\bar{Y}\leq\bar{Z}$ in $J$ respectively.

For any object $A$, we have $\underline{P}(A)\ni\ecc{(\varphi,\bar{X})}$ for some $\varphi\in P(\prod_{a=1}^nX_a\times A)$. Here $\ecc{(\varphi,\bar{X})}=\ecc{(\varphi',\bar{Y})}$, where $\varphi'\in P(\prod_{b=1}^mY_b\times A)$ if and only if there exists $\bar{Z}\in J$ such that $\bar{X}\leq\bar{Z}\geq\bar{Y}$ with induced function $\tau$ and $\tau'$ such that $P(\ple{ \pr{\tau(1)},\dots,\pr{\tau(n)}}\times \id{A})\varphi=P(\ple{ \pr{\tau'(1)},\dots,\pr{\tau'(m)}}\times \id{A})\varphi'$ in $P(\prod_{c=1}^sZ_c\times A)$. The reindexing is defined in a common list of $J$: if $\ecc{(f,\bar{X})}:A\dashrightarrow B$ and $\ecc{(\psi,\bar{Y})}\in\underline{P}(B)$, take $\bar{X}\leq\bar{Z}\geq\bar{Y}$; then
\begin{align*}&\underline{P}\big(\ecc{(f,\bar{X})}\big)\ecc{(\psi,\bar{Y})}\\
&=\underline{P}\big(\ecc{(f\circ(\ple{\pr{\tau(1)},\dots,\pr{\tau(n)}}\times\id{A}),\bar{Z})}\big)\ecc{(P(\ple{\pr{\tau'(1)},\dots,\pr{\tau'(m)}}\times\id{B})\psi,\bar{Z})}\\
&=\ecc{(P(\ple{\pr{1},\dots,\pr{s},f\circ(\ple{\pr{\tau(1)},\dots,\pr{\tau(n)}}\times\id{A})})P(\ple{\pr{\tau'(1)},\dots,\pr{\tau'(m)}}\times\id{B})\psi,\bar{Z})}\\
&=\ecc{(P(\ple{\pr{\tau'(1)},\dots,\pr{\tau'(m)},f\circ(\ple{\pr{\tau(1)},\dots,\pr{\tau(n)}}\times\id{A})})\psi,\bar{Z})}.\end{align*}
\[\begin{tikzcd}
	{\prod Z_c\times A} &&& {\prod X_a\times A} & B \\
	{\prod Z_c\times B} &&& {\prod Y_b\times B}
	\arrow["{\ple{ \pr{\tau(1)},\dots,\pr{\tau(n)}}\times \id{A}}", from=1-1, to=1-4]
	\arrow["f", from=1-4, to=1-5]
	\arrow["{\ple{ \pr{\tau'(1)},\dots,\pr{\tau'(m)}}\times \id{B}}"'yshift=-1pt, from=2-1, to=2-4]
	\arrow["{\ple{\pr{1},\dots,\pr{s},f\circ(\ple{\pr{\tau(1)},\dots,\pr{\tau(n)}}\times\id{A})}}"', from=1-1, to=2-1]
\end{tikzcd}\]
\begin{remark}
Call $(\underline{F},\underline{\mathfrak f}):P\to\underline{P}$ the map in the colimit starting from $D(\emptyset)$: the functor $\underline{F}$ maps a $\CC$-arrow $f:A\to B$ into $[f,\emptyset]:A\dashrightarrow B$, a component of the natural transformation $\underline{\mathfrak{f}}_A$ sends $\alpha\in P(A)$ into $[\alpha,\emptyset]\in \underline{P}(A)$. Moreover, by the universal property stated in \Cref{coroll:const}, any morphism $D(\bar{X})\to\underline{P}$ is uniquely determined by the morphism $(\underline{F},\underline{\mathfrak f}):P\to\underline{P}$ and a choice of a constant of sort $\prod_{a=1}^nX_a$ in the base category of $\underline{P}$---that is $\ecc{(\id{\Pi X_a},\bar{X})}:\tmn\dashrightarrow \prod_{a=1}^nX_a$. For this reason and by definition of colimit, any doctrine morphism $(G,\mathfrak g):\underline{P}\to R$ is uniquely determined by its precompositions with $(\underline{F},\underline{\mathfrak f})$ and a choice of a constant of sort $GX$ in the base category of $R$ for any pair $(X,\lambda)$ for every object $X$ in $\CC$ and any $\lambda\in\Lambda$. 
\end{remark}
\begin{remark}
Note that the same construction can be made if we change the cardinals over the objects: take for any object $X$ a cardinal $\Lambda_X$, and call $J$ the set of finite lists with values in $\{(X,\lambda)\}_{X\in\text{ob}\CC,\lambda\in\Lambda_X}$. In this case, we just ask for the existence of at least one cardinal $\Lambda_X$ different from $0$.
\end{remark}
%

\subsection{The construction of the directed colimit $\protect\underrightarrow{P}$}\label{sect:dir_colim2}\label{sect:label}
\begin{center}
\fbox{\parbox{.5\textwidth}{\raggedright From now on, $P:\CC\op\to\Pos$ is a fixed implicational existential doctrine, with a small base category, unless otherwise specified.}}
\end{center} 
{\bf Listing formulae and labeling new constants:}
Let $\Lambda=\text{card}\big(\bigsqcup_{X\in\text{ob}\CC}P(X)\big)$ and build the colimit doctrine $\underline{P}$ as in the previous section with respect to this cardinal.
By \Cref{prop:const}, every doctrine and morphism that appear in the image of the diagram $D$ are implicational and existential, thus by \Cref{prop:addit_struct} also $\underline{P}$ is implicational and existential.
First of all we list all objects of $\CC$---hence also all objects of $\underline{\CC}$---as the set $\text{ob}\CC=\{B\}_{B\in\text{ob}\CC}$. For any fixed $B$, we can surely list all elements of $\underline{P}(B)$ as $\left\{\ecc{(\varphi^{B}_j,\bar{X}^{(B,j)})}\right\}_{j\in\Lambda}$ where we fix a representative
\begin{equation*}\varphi^{B}_j\in P(\prod_{a=1}^{n^{(B,j)}}X_a^{(B,j)}\times B)\end{equation*}
for a given list $\bar{X}^{(B,j)}=\big((X_1^{(B,j)},x_1^{(B,j)}),\dots,(X_{n^{(B,j)}}^{(B,j)},x_{n^{(B,j)}}^{(B,j)})\big)$ in $J$.
Now consider all formulae of the kind
\begin{equation*}\underline{\exists}^{B}_\tmn\ecc{(\varphi^{B}_j,\bar{X}^{(B,j)})}, \text{ for all }j\in\Lambda.\end{equation*}
Then we have in $\underline{P}(\tmn)$
\begin{equation*}\underline{\exists}^{B}_\tmn\ecc{(\varphi^{B}_j,\bar{X}^{(B,j)})}=\ecc{(\exists^{B}_{\Pi X_a^{(B,j)}}\varphi^{B}_j,\bar{X}^{(B,j)})},\end{equation*}
where we recall the adjunction in $\Pos$:
\[\begin{tikzcd}
	{P(\prod X_a^{(B,j)}\times B)} & {P(\prod X_a^{(B,j)})}
	\arrow[""{name=0, anchor=center, inner sep=0}, "{\exists^{B}_{\Pi X_a^{(B,j)}}}", shift left=2, from=1-1, to=1-2]
	\arrow[""{name=1, anchor=center, inner sep=0}, "{P(\pr1)}", shift left=2, from=1-2, to=1-1]
	\arrow["\bot"{description}, Rightarrow, draw=none, from=0, to=1]
\end{tikzcd}.\]
Here we write $\pr1$ meaning the first projection from the product $(\prod X_a^{(B,j)})\times B$.

For any fixed $B\in\text{ob}\CC$ and for any $j\in\Lambda$ define $d^{B}_j$ as follows:
\begin{itemize}
\item if $j=0$, then $d^{B}_0$ is the smallest ordinal such that
\begin{equation*}d^{B}_0> x_a^{(B,0)}\text{ for any }{a=1,\dots,n^{(B,0)}};\end{equation*}
\item if $j$ is a successor or a limit ordinal, then $d^{B}_j$ is the smallest ordinal such that $d^{B}_j>d^{B}_h$ for all $h<j$ and such that
\begin{equation*}d^{B}_j>x_a^{(B,k)}\text{ for any }a=1,\dots,{n^{(B,k)}}\text{ and } k\leq j.\end{equation*}
\end{itemize}
Note that in particular for any $j\in\Lambda$:
\begin{equation*}
(B,d^{B}_j)\notin\{(X_a^{(B,j)},x_a^{(B,j)})\}_{a=1}^{n^{(B,j)}}.
\end{equation*}
Now, since
\begin{equation*}\varphi^{B}_j\in P(\prod_{a=1}^{n^{(B,j)}}X_a^{(B,j)}\times B)\end{equation*}
we can take its equivalence class with respect to $\bar{X}^{(B,j)}\in J$, hence we end up in $\underline{P}(B)$, or with respect to the list $\bar{X}^{(B,j)}_\star=\big((X_1^{(B,j)},x_1^{(B,j)}),\dots,(X_{n^{(B,j)}}^{(B,j)},x_{n^{(B,j)}}^{(B,j)}),(B,d^B_j)\big)$---i.e.\ adding $(B,d^B_j)$ to the list $\bar{X}^{(B,j)}$---, hence we end up in $\underline{P}(\tmn)$.
We have in $\underline{P}(\tmn)$ the element
\begin{equation*}\underline{\exists}^{B}_\tmn\ecc{(\varphi^{B}_j,\bar{X}^{(B,j)})}\longrightarrow\ecc{(\varphi^{B}_j,\bar{X}^{(B,j)}_\star)}.\end{equation*}
Define in $P(\prod X_a^{(B,j)}\times B)$
\begin{equation*}\psi^B_{j}\coloneqq P(\pr1)\exists^{B}_{\Pi X_a^{(B,j)}}\varphi^{B}_j\longrightarrow\varphi^{B}_j\end{equation*}
so that taking its class with respect to $\bar{X}^{(B,j)}_\star$ we get
\begin{equation}\label{eq:comp_impl}\ecc{(\psi^B_{j},\bar{X}^{(B,j)}_\star)}\in\underline{P}(\tmn),\text{ with }\ecc{(\psi^B_{j},\bar{X}^{(B,j)}_\star)}=\underline{\exists}^{B}_\tmn\ecc{(\varphi^{B}_j,\bar{X}^{(B,j)})}\longrightarrow\ecc{(\varphi^{B}_j,\bar{X}^{(B,j)}_\star)}.\end{equation}

Now, starting from $\underline{P}$, we do another construction.
\pointinproof{The directed preorder $I$ and the diagram $\DELTA:I\to\Dott$}
Define the poset $I$ of finite sets of pairs of the kind $(B,j)$, where $B\in\text{ob}\CC$ and $j\in\Lambda$, ordered by inclusion. We also want the empty set to belong to $I$.
\[\begin{tikzcd}
	I &&& \Dott \\
	\emptyset &&& {\underline{P}:\underline{\CC}\op\to\Pos} \\
	{\mathcal{U}=\{(B_1,j_1),\dots,(B_n,j_n)\}} &&& {\underline{P}^\mathcal{U}:\underline{\CC}\op\to\Pos} \\
	{\mathcal{V}=\{(B_1,j_1),\dots,(B_{n+m},j_{n+m})\}} &&& {\underline{P}^\mathcal{V}:\underline{\CC}\op\to\Pos}
	\arrow["{\DELTA}", from=1-1, to=1-4]
	\arrow[maps to, from=2-1, to=2-4]
	\arrow["\subseteq"{marking}, draw=none, from=2-1, to=3-1]
	\arrow["\subseteq"{marking}, draw=none, from=3-1, to=4-1]
	\arrow["{(\id{},\mathfrak{f}_{\mathcal{U}})}", from=2-4, to=3-4]
	\arrow["{(\id{},\mathfrak{f}_{\mathcal{UV}})}", from=3-4, to=4-4]
	\arrow[maps to, from=3-1, to=3-4]
	\arrow[maps to, from=4-1, to=4-4]
\end{tikzcd}\]
where:
\begin{itemize}
\item $\underline{P}^\mathcal{U}(A)=\underline{P}(A)_{\downarrow\underline{P}(!)\bigwedge_{i=1}^n\ecc{(\psi^{B_i}_{j_i},\bar{X}^{(B_i,j_i)}_\star)}}$, with definition on arrows as in \eqref{def:ax};
\item $(\mathfrak{f}_{\mathcal{U}})_A:\underline{P}(A)\to \underline{P}(A)_{\downarrow\underline{P}(!)\bigwedge_{i=1}^n\ecc{(\psi^{B_i}_{j_i},\bar{X}^{(B_i,j_i)}_\star)}}$ is the assignment
\begin{equation*}\ecc{(\alpha,\bar{Y})}\mapsto\ecc{(\alpha,\bar{Y})}\land\underline{P}(!)\bigwedge_{i=1}^n\ecc{(\psi^{B_i}_{j_i},\bar{X}^{(B_i,j_i)}_\star)};\end{equation*}
\item $(\mathfrak{f}_{\mathcal{UV}})_A:\underline{P}(A)_{\downarrow\underline{P}(!)\bigwedge_{i=1}^n\ecc{(\psi^{B_i}_{j_i},\bar{X}^{(B_i,j_i)}_\star)}}\to\underline{P}(A)_{\downarrow\underline{P}(!)\bigwedge_{i=1}^{n+m}\ecc{(\psi^{B_i}_{j_i},\bar{X}^{(B_i,j_i)}_\star)}}$ is again the assignment
\begin{equation*}\ecc{(\alpha,\bar{Y})}\mapsto\ecc{(\alpha,\bar{Y})}\land\underline{P}(!)\bigwedge_{i=1}^{n+m}\ecc{(\psi^{B_i}_{j_i},\bar{X}^{(B_i,j_i)}_\star)}.\end{equation*}
\end{itemize}
Use associativity and commutativity of conjunction to observe that this is a diagram.
\pointinproof{The colimit of $\DELTA$}
Let $\underrightarrow{P}:\underline{\CC}\op\to\Pos$ be the colimit of $\DELTA$ in $\Dott$---we refer again to the proof of \Cref{prop:dir_colim_app} in \Cref{sub:dir_colim} to know the details of how it is computed. The base category is $\underline{\CC}$, since all functors in the 1-arrows of the diagram are identities.

The fibers of the doctrine are defined as
\begin{equation*}\underrightarrow{P}(C)=\faktor{\bigsqcup_{\mathcal{U}\in I}\underline{P}^\mathcal{U}(C)}{\sim},\end{equation*}
where $[\ecc{(\alpha,\bar{Y})},\mathcal{U}]\in\underrightarrow{P}(C)$ for some $\ecc{(\alpha,\bar{Y})}\in\underline{P}(C)$ such that $\ecc{(\alpha,\bar{Y})}\leq\underline{P}(!)\bigwedge_{i=1}^n\ecc{(\psi^{B_i}_{j_i},\bar{X}^{(B_i,j_i)}_\star)}$ with a fixed $\mathcal{U}=\{(B_1,j_1),\dots,(B_n,j_n)\}\in I$. Here $[\ecc{(\alpha,\bar{Y})},\mathcal{U}]=[\ecc{(\beta,\bar{Z})},\mathcal{V}]$, for $\ecc{(\beta,\bar{Z})}\in\underline{P}(C)$, $\ecc{(\beta,\bar{Z})}\leq\underline{P}(!)\bigwedge_{r=1}^m\ecc{(\psi^{D_r}_{l_r},\bar{X}^{(D_r,l_r)}_\star)}$ with a fixed $\mathcal{V}=\{(D_1,l_1),\dots,(D_m,l_m)\}\in I$, if there exists a $\mathcal{W}=\{(A_1,q_1),\dots,(A_z,q_z)\}\supseteq\mathcal{U,V}$ such that in $\underline{P}(C)$ we have
\begin{equation*}\ecc{(\alpha,\bar{Y})}\land\underline{P}(!)\bigwedge_{k=1}^z\ecc{(\psi^{A_k}_{q_k},\bar{X}^{(A_k,q_k)}_\star)}=\ecc{(\beta,\bar{Z})}\land\underline{P}(!)\bigwedge_{k=1}^z\ecc{(\psi^{A_k}_{q_k},\bar{X}^{(A_k,q_k)}_\star)}.\end{equation*}
This assignment appropriately extends to arrows in $\underline{\CC}$. Also in this case, every doctrine and doctrine morphism in the image of the diagram $\DELTA$ are implicational and existential---this follows from \Cref{prop:ax}---hence by \Cref{prop:addit_struct} the doctrine $\underrightarrow{P}$ is implicational and existential too.

Call $(\id{},\underrightarrow{f}):\underline{P}\to\underrightarrow{P}$ the map in the colimit starting from $\DELTA(\emptyset)$: a component of the natural transformation $\underrightarrow{\mathfrak{f}}_A$ sends $\ecc{(\alpha,\bar{X})}\in \underline{P}(A)$, for some $\alpha\in P(\prod_{i=1}^n X_a\times A)$, into $[\ecc{(\alpha,\bar{X})},\emptyset]\in\underrightarrow{P}(A)$.
\begin{remark}\label{rmk:recap}
We revise in a single diagram the two constructions we did above:
\begin{center}
\begin{tikzcd}
\CC\op\arrow[r,"{\underline{F}}\op"]\arrow[dr,"P"',""{name=L},bend right]&{\underline{\CC}}\op\arrow[r,"{\id{}}\op"]\arrow[d,""'{name=C},"\underline{P}"{name=M}]&\underline{\CC}\op\arrow[dl, "\underrightarrow{P}",""'{name=R}, bend left]\\
&\Pos
\arrow[rightarrow,"\underline{\mathfrak{f}}"near end,"\cdot"', from=L, to=C, bend left=10]
\arrow[rightarrow,"{\underrightarrow{\mathfrak{f}}}"near start,"\cdot"', from=M, to=R, bend left=10]
\end{tikzcd}
\end{center}
The doctrine $P:\CC\op\to\Pos$ has a small base category, and it is implicational and existential.

Call the composition $(\id{},\underrightarrow{\mathfrak{f}})\circ(\underline{F},\underline{\mathfrak{f}})=(F,\mathfrak{f})$, so that both $F$ and $\mathfrak{f}$ take the corresponding equivalence classes:
\begin{align*}F:\CC\to\underline{\CC},\qquad&(f:A\to B)\mapsto(\ecc{(f,\emptyset)}:A\dashrightarrow B)\\
\mathfrak{f}_A:P(A)\to\underrightarrow{P}(A),\qquad&\alpha\mapsto[\ecc{(\alpha,\emptyset)},\emptyset].\end{align*}
This morphism preserves implicational and existential structure because both $(\underline{F},\underline{\mathfrak{f}})$ and $(\id{},\underrightarrow{\mathfrak{f}})$ do---this follows again by \Cref{prop:addit_struct}.
\end{remark}

\subsection{Richness of $\protect\underrightarrow{P}$ and weak universal property}\label{sub:weak_univ_prop}
We next show the main result of the section, which is the proof of the fact that the doctrine $\underrightarrow{P}$ defined in \Cref{sect:dir_colim2} is rich; moreover, the morphism $(F,\mathfrak{f}):P\to\underrightarrow{P}$ satisfies a weak universal property, meaning that any other implicational existential morphism $P\to R$ with $R$ rich factors through $(F,\mathfrak{f})$.
\begin{theorem}
Let $P$ be an implicational existential doctrine with a small base category. Then the doctrine $\underrightarrow{P}$ is rich.
\end{theorem}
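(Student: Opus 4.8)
The plan is to trace the richness inequality \eqref{eq:rich} back to the axioms $\psi^{B}_{j}$ that were forced to become true in $\underrightarrow{P}$ by construction. Fix an object $A$ of the base category $\underline{\CC}$ and an element $\sigma\in\underrightarrow{P}(A)$; the existential structure of $\underrightarrow{P}$ has already been established, so $\exists^{A}_{\tmn}$ (the left adjoint to $\underrightarrow{P}(\pr1)$) makes sense, and I must produce a constant $d\colon\tmn\dashrightarrow A$ with $\exists^{A}_{\tmn}\sigma\leq\underrightarrow{P}(d)\sigma$. Throughout I shall use that the morphism $(\id{},\underrightarrow{\mathfrak{f}})\colon\underline{P}\to\underrightarrow{P}$ is implicational, existential and natural (\Cref{prop:addit_struct}), so that $\underrightarrow{\mathfrak{f}}$ commutes with $\top$, with $\to$, with the existential quantifiers, and with every reindexing.

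First I would reduce $\sigma$ to an enumerated representative. A general element of $\underrightarrow{P}(A)$ has the form $[\ecc{(\alpha,\bar{Y})},\mathcal{U}]$ with $\ecc{(\alpha,\bar{Y})}\leq\underline{P}(!)\bigwedge_{i}\ecc{(\psi^{B_i}_{j_i},\bar{X}^{(B_i,j_i)}_\star)}$; applying the defining equivalence of the colimit with $\mathcal{W}=\mathcal{U}$ gives the trivial identity on both sides, so $[\ecc{(\alpha,\bar{Y})},\mathcal{U}]=[\ecc{(\alpha,\bar{Y})},\emptyset]=\underrightarrow{\mathfrak{f}}_A(\ecc{(\alpha,\bar{Y})})$. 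Hence every $\sigma$ lies in the image of $\underrightarrow{\mathfrak{f}}_A$. Since the fibre $\underline{P}(A)$ was enumerated as $\{\ecc{(\varphi^{A}_{j},\bar{X}^{(A,j)})}\}_{j\in\Lambda}$, I may assume $\sigma=\underrightarrow{\mathfrak{f}}_A(\tau)$ with $\tau=\ecc{(\varphi^{A}_{j},\bar{X}^{(A,j)})}$ for some index $j$. The witnessing constant is then $d\colon\tmn\dashrightarrow A$, the constant of sort $A$ labelled by $d^{A}_{j}$ that was adjoined to the base category precisely when treating this formula.

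The computational heart is the chain assembling these pieces. First I would verify, by unwinding the reindexing formula of $\underline{P}$ along $d$, that $\underline{P}(d)\tau=\ecc{(\varphi^{A}_{j},\bar{X}^{(A,j)}_\star)}$ in $\underline{P}(\tmn)$, i.e.\ that substituting the new constant $d^{A}_{j}$ into $\varphi^{A}_{j}$ is the same as reindexing along $d$. Granting this, recall from \eqref{eq:comp_impl} that $\ecc{(\psi^{A}_{j},\bar{X}^{(A,j)}_\star)}=\underline{\exists}^{A}_{\tmn}\tau\to\ecc{(\varphi^{A}_{j},\bar{X}^{(A,j)}_\star)}$. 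By \Cref{coroll:ax}, at the stage $\{(A,j)\}$ of the diagram $\DELTA$ the element $\ecc{(\psi^{A}_{j},\bar{X}^{(A,j)}_\star)}$ becomes the top of its fibre, so $\underrightarrow{\mathfrak{f}}_{\tmn}(\ecc{(\psi^{A}_{j},\bar{X}^{(A,j)}_\star)})=\top$. Using that $\underrightarrow{\mathfrak{f}}$ preserves $\exists$, $\to$ and reindexing, I compute $\exists^{A}_{\tmn}\sigma\to\underrightarrow{P}(d)\sigma=\underrightarrow{\mathfrak{f}}_{\tmn}(\underline{\exists}^{A}_{\tmn}\tau)\to\underrightarrow{\mathfrak{f}}_{\tmn}(\underline{P}(d)\tau)=\underrightarrow{\mathfrak{f}}_{\tmn}(\underline{\exists}^{A}_{\tmn}\tau\to\underline{P}(d)\tau)=\underrightarrow{\mathfrak{f}}_{\tmn}(\ecc{(\psi^{A}_{j},\bar{X}^{(A,j)}_\star)})=\top$. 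In a cartesian closed poset $x\to y=\top$ forces $x\leq y$, so $\exists^{A}_{\tmn}\sigma\leq\underrightarrow{P}(d)\sigma$, which is exactly \eqref{eq:rich}.

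I expect the main obstacle to be the reindexing identity $\underline{P}(d)\tau=\ecc{(\varphi^{A}_{j},\bar{X}^{(A,j)}_\star)}$: it requires choosing the common refinement $\bar{X}^{(A,j)}_\star\geq\bar{X}^{(A,j)},\big((A,d^{A}_{j})\big)$ and checking, through the explicit colimit formula for reindexing in $\underline{P}$, that the composite reindexing along the projections $\ple{\pr{\tau(1)},\dots}$ collapses to the operation of appending the fresh constant to the list. This bookkeeping with the list‑inclusion maps is the only genuinely delicate point; everything else follows formally from the enumeration, from equation \eqref{eq:comp_impl}, and from the structure‑preservation statements of \Cref{prop:addit_struct} and the universal property \Cref{coroll:ax}.
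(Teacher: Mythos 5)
Your proposal is correct and follows essentially the same route as the paper's proof: the same reduction of $[\ecc{(\varphi,\bar{Y})},\mathcal{U}]$ to the axiom-free representative $[\ecc{(\varphi,\bar{Y})},\emptyset]$, the same use of the enumeration $\ecc{(\varphi^B_j,\bar{X}^{(B,j)})}$, the same witnessing constant $\ecc{(\id{B},(B,d^B_j))}$ whose reindexing appends the starred entry, the same appeal to \eqref{eq:comp_impl}, and the same observation that $[\ecc{(\psi^B_j,\bar{X}^{(B,j)}_\star)},\emptyset]$ is the top of $\underrightarrow{P}(\tmn)$, concluded by residuation. The only cosmetic difference is that you route the final computation through the structure-preservation of $\underrightarrow{\mathfrak{f}}$ rather than computing the implication directly in $\underrightarrow{P}(\tmn)$ as the paper does.
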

\begin{proof}
Given $[\ecc{(\varphi,\bar{Y})},\mathcal{U}]\in\underrightarrow{P}(B)$, we will find an arrow $\ecc{(c,\bar{Z})}:\tmn\dashrightarrow B$ such that
\begin{equation*}\underrightarrow{\exists}^B_\tmn[\ecc{(\varphi,\bar{Y})},\mathcal{U}]\leq\underrightarrow{P}(\ecc{(c,\bar{Z})})[\ecc{(\varphi,\bar{Y})},\mathcal{U}].\end{equation*}

Note that $[\ecc{(\varphi,\bar{Y})},\mathcal{U}]=[\ecc{(\varphi,\bar{Y})},\emptyset]$: indeed taking $\mathcal{U}\supseteq\mathcal{U},\emptyset$ we have in $\underline{P}(B)$
\begin{equation*}\ecc{(\varphi,\bar{Y})}\land\underline{P}(!)\bigwedge_{i=1}^n\ecc{(\psi^{B_i}_{j_i},\bar{X}^{(B_i,j_i)}_\star)}=\ecc{(\varphi,\bar{Y})}.\end{equation*}

Moreover, since $\ecc{(\varphi,\bar{Y})}\in\underline{P}(B)$ in particular $\ecc{(\varphi,\bar{Y})}=\ecc{(\varphi^B_j,\bar{X}^{(B,j)})}$ for some $j\in\Lambda$, with
\begin{equation*}\varphi^{B}_j\in P(\prod_{a=1}^{n^{(B,j)}}X_a^{(B,j)}\times B).\end{equation*}
First of all compute $\underrightarrow{\exists}^B_\tmn[\ecc{(\varphi,\bar{Y})},\mathcal{U}]=\underrightarrow{\exists}^B_\tmn[\ecc{(\varphi^B_j,\bar{X}^{(B,j)})},\emptyset]=[\underline{\exists}^B_\tmn\ecc{(\varphi^B_j,\bar{X}^{(B,j)})},\emptyset]$.
Then let $c\coloneqq \id{B}$ and $\bar{Z}\coloneqq (B,d^B_j)$, so we can consider $\ecc{(\id{B},{(B,d^B_j)})}:\tmn\dashrightarrow B$ the equivalence class of the identity
\begin{equation*}\id{B}:B\to B.\end{equation*}
We then compute
\begin{equation*}\underrightarrow{P}(\ecc{(\id{B},{(B,d^B_j)})})[\ecc{(\varphi,\bar{Y})},\mathcal{U}]=\underrightarrow{P}(\ecc{(\id{B},{(B,d^B_j)})})[\ecc{(\varphi^B_j,\bar{X}^{(B,j)})},\emptyset]=[\ecc{(\varphi^B_j,\bar{X}^{(B,j)}_\star)},\emptyset],\end{equation*}
thus, in $\underrightarrow{P}(\tmn)$ we have
\begin{equation*}\underrightarrow{\exists}^B_\tmn[\ecc{(\varphi,\bar{Y})},\mathcal{U}]\leq\underrightarrow{P}(\ecc{(\id{B},{(B,d^B_j)})})[\ecc{(\varphi,\bar{Y})},\mathcal{U}]\end{equation*}
if and only if
\begin{equation*}[\ecc{(\top,\emptyset)},\emptyset]\leq\underrightarrow{\exists}^B_\tmn[\ecc{(\varphi,\bar{Y})},\mathcal{U}]\longrightarrow\underrightarrow{P}(\ecc{(c,{(B,d^B_j)})})[\ecc{(\varphi,\bar{Y})},\mathcal{U}],\end{equation*}
i.e.\
\begin{equation*}[\ecc{(\top,\emptyset)},\emptyset]\leq[\underline{\exists}^B_\tmn\ecc{(\varphi^B_j,\bar{X}^{(B,j)})},\emptyset]\longrightarrow[\ecc{(\varphi^B_j,\bar{X}^{(B,j)}_\star)},\emptyset];\end{equation*}
but then compute the implication in $\underline{P}(\tmn)$ as seen in \eqref{eq:comp_impl} to get
\begin{equation*}[\ecc{(\top,\emptyset)},\emptyset]\leq[\ecc{(\psi^B_{j},\bar{X}^{(B,j)}_\star)} ,\emptyset]\end{equation*}
which holds since $[\ecc{(\psi^B_{j},\bar{X}^{(B,j)}_\star)} ,\emptyset]$ is the top element of $\underrightarrow{P}(\tmn)$ by definition: take $\{(B,j)\}\supseteq\emptyset$ and observe that in $\underline{P}(\tmn)$:
\begin{equation*}\ecc{(\top,\emptyset)}\land\ecc{(\psi^B_{j},\bar{X}^{(B,j)}_\star)}=\ecc{(\psi^B_{j},\bar{X}^{(B,j)}_\star)}\land\ecc{(\psi^B_{j},\bar{X}^{(B,j)}_\star)}.\end{equation*}
This concludes the proof that $\underrightarrow{P}$ is rich.
\end{proof}
\begin{theorem}\label{thm:weak_univ_prop}
Let $P:\CC\op\to\Pos$ be an implicational existential doctrine with a small base category. The 1-arrow $(F,\mathfrak f):P\to \underrightarrow{P}$ is implicational existential and it is such that $\underrightarrow{P}$ is rich, and it is weakly universal with respect to this property, i.e.\ for any implicational existential morphism $(H,\mathfrak{h}):P\to R$ where $R:\mathbb{D}^{\op}\to\Pos$ is an implicational existential rich doctrine, there exists an implicational existential 1-arrow $(G,\mathfrak g):\underrightarrow{P}\to R$ such that $(G,\mathfrak g)(F,\mathfrak f)=(H,\mathfrak h)$.

Moreover, if $P$, $R$ and $(H,\mathfrak h)$ are respectively bounded, universal, elementary, then such $(G,\mathfrak g)$ is respectively bounded, universal, elementary.
\[\begin{tikzcd}
	P && R \\
	& \underrightarrow{P}
	\arrow[from=1-1, to=1-3,"{(H,\mathfrak{h})}"]
	\arrow[from=1-1, to=2-2,"{(F,\mathfrak{f})}"']
	\arrow[from=2-2, to=1-3,"{(G,\mathfrak{g})}"', dashed]
\end{tikzcd}\]
\end{theorem}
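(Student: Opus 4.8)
The plan is to build $(G,\mathfrak g)$ by invoking, in turn, the universal properties of the two directed colimits $\underline{P}=\mathrm{colim}_J D$ and $\underrightarrow{P}=\mathrm{colim}_I\DELTA$ that define $\underrightarrow{P}$. Since $\underrightarrow{P}$ is obtained from $P$ by first freely adjoining constants (the colimit of $D$) and then forcing a family of axioms (the colimit of $\DELTA$), a morphism out of $\underrightarrow{P}$ extending $(H,\mathfrak h)$ amounts to two things: (i) a choice, for every pair $(X,\lambda)$ labelling a new constant, of an arrow $c_{(X,\lambda)}\colon\tmn\to HX$ in $\mathbb{D}$ — this determines a morphism $\underline{P}\to R$ via the universal property of adjoining one constant (\Cref{coroll:const}) together with the colimit universal property (\Cref{prop:dir_colim,prop:addit_struct}); and (ii) a verification that each adjoined axiom $\ecc{(\psi^B_j,\bar X^{(B,j)}_\star)}$ is sent to the top element of $R(\tmn)$, which by the universal property of adjoining one axiom (\Cref{coroll:ax}) and again \Cref{prop:addit_struct} extends the morphism along $(\id{},\underrightarrow{\mathfrak f})$ to all of $\underrightarrow{P}$.

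The delicate point is the choice of constants in (i), which must be made so that the axioms in (ii) hold; this is exactly where richness of $R$ enters. I would define the constants $c_{(X,\lambda)}$ by transfinite recursion on the label $\lambda\in\Lambda$. At stage $\lambda$, for each sort $X$ two cases occur. If the pair $(X,\lambda)$ is not a witness pair $(B,d^B_j)$, choose $c_{(X,\lambda)}$ to be any arrow $\tmn\to HX$ — one exists because a rich doctrine has an arrow from $\tmn$ to each of its objects. If instead $(X,\lambda)=(B,d^B_j)$ — and because $j\mapsto d^B_j$ is strictly increasing, this determines $(B,j)$ uniquely — set $\sigma:=R(\ple{c_{x_1^{(B,j)}},\dots,c_{x_n^{(B,j)}},\id{HB}})\,\mathfrak h(\varphi^B_j)\in R(HB)$, which is well defined because the labelling was arranged so that $d^B_j>x_a^{(B,j)}$ for every $a$, whence all the constants occurring in the reindexing have already been chosen; then use richness to pick $c_{(B,d^B_j)}:=d\colon\tmn\to HB$ with $\exists^{HB}_\tmn\sigma\leq R(d)\sigma$. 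The conditions $d^B_j>x_a^{(B,k)}$ for $k\leq j$ and $d^B_j>d^B_h$ for $h<j$ are precisely what guarantee that the recursion is well-founded and that the needed constants are available at each stage.

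With the constants fixed, I would verify (ii). Unwinding $\psi^B_j=P(\pr1)\exists^{B}_{\Pi X_a}\varphi^B_j\to\varphi^B_j$ and using that the morphism $\underline{P}\to R$ preserves implication, existentials and reindexing, the image of $\ecc{(\psi^B_j,\bar X^{(B,j)}_\star)}$ in $R(\tmn)$ is $\exists^{HB}_\tmn\sigma\to R(c_{(B,d^B_j)})\sigma$, where the Beck--Chevalley condition is used to move the reindexing past the existential; this is $\top$ exactly when $\exists^{HB}_\tmn\sigma\leq R(c_{(B,d^B_j)})\sigma$, which is the defining property of the witness chosen above. Hence every adjoined axiom is satisfied, the morphism extends to $(G,\mathfrak g)\colon\underrightarrow{P}\to R$, and $(G,\mathfrak g)(F,\mathfrak f)=(H,\mathfrak h)$ because $(F,\mathfrak f)=(\id{},\underrightarrow{\mathfrak f})(\underline F,\underline{\mathfrak f})$ and both factors commute with the constructed morphism by the cocone equations.

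Preservation of the implicational and existential structure of $(G,\mathfrak g)$, and of the bounded, universal or elementary structure in the ``moreover'' clause, then follows formally: each single step preserves the relevant structure by \Cref{prop:const,prop:ax}, and the colimit preserves it by \Cref{prop:addit_struct}. The main obstacle lies entirely in the second paragraph — organising the transfinite recursion so that each witness constant is selected only after the constants it depends upon, which is exactly what the ordinal bookkeeping in the definition of the $d^B_j$ secures. I note that the resulting $(G,\mathfrak g)$ is not canonical: the arbitrary choices of non-witness constants, together with the non-uniqueness of witnesses supplied by richness, are why the property is only \emph{weakly} universal and carries no uniqueness clause.
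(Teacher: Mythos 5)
Your proposal is correct, and its skeleton coincides with the paper's: both reduce, via the universal properties of the two colimits together with \Cref{coroll:const} and \Cref{coroll:ax}, to choosing constants $c_{(Y,\lambda)}:\tmn_\ct{D}\to HY$ so that each $\ecc{(\psi^B_j,\bar{X}^{(B,j)}_\star)}$ goes to $\top$, and both settle the ``moreover'' clause by \Cref{prop:const,prop:ax,prop:addit_struct}. You differ, however, in the one step carrying the weight. The paper well-orders $\text{ob}\CC$, recurses over pairs $(B,j)$ lexicographically, substitutes the constants already assigned (the set $K^{(B,j)}$), and applies richness to the existential over the whole remaining product $\widehat{\prod}HX_a^{(B,j)}\times HB$, so one application of richness yields a tuple $c^{(B,j)}$ containing the still-missing parameter constants together with the witness; the price is the ensuing computation factoring $\overline{c}$ through $c^{(B,j)}$, splitting $\exists^{\widehat{\Pi}HX_a\times HB}_{\tmn_\ct{D}}=\exists^{\widehat{\Pi}HX_a}_{\tmn_\ct{D}}\exists^{HB}_{\widehat{\Pi}HX_a}$ and invoking the unit to get $R(\sigma)\leq\exists^{\widehat{\Pi}HX_a}_{\tmn_\ct{D}}$. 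Your recursion is instead on the label $\lambda$, and the bookkeeping $d^B_j>x_a^{(B,j)}$ (with $j\mapsto d^B_j$ strictly increasing, so the witness pair determines $(B,j)$) guarantees all parameter constants are fixed before the witness stage; richness is then applied verbatim as in \Cref{def_rich} to a single $\sigma\in R(HB)$, and the verification collapses to one Beck--Chevalley step, with the non-witness constants allowed to be entirely arbitrary. This buys a substantially shorter verification, avoiding the paper's auxiliary maps $\omega$, $\tau$, $\sigma$; the only slip is notational, as your $\sigma$ should be $R(\ple{c_{(X_1,x_1)}\circ !_{HB},\dots,c_{(X_n,x_n)}\circ !_{HB},\id{HB}})\mathfrak{h}_{\Pi X_a\times B}\varphi^B_j$, precomposing the constants with $!_{HB}$.
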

\begin{proof}
We rewrite the two colimit diagrams:
\[\begin{tikzcd}
	&& {P_{\Pi X_a}} \\
	P &&&& {\underline{P}} \\
	&& {P_{\Pi Y_b}}
	\arrow["{(F_{\bar{X}},\mathfrak{f}_{\bar{X}})}", from=2-1, to=1-3]
	\arrow["{(F_{\bar{Y}},\mathfrak{f}_{\bar{Y}})}"', from=2-1, to=3-3]
	\arrow["{(F'_{\bar{Y}},\mathfrak{f}'_{\bar{Y}})}"', from=3-3, to=2-5]
	\arrow["{(F_{\bar{X}\bar{Y}},\mathfrak{f}_{\bar{X}\bar{Y}})}"{description, pos=0.6}, from=1-3, to=3-3]
	\arrow["{(\underline{F},\underline{\mathfrak{f}})}"{description, pos=0.3}, bend left=15, from=2-1, to=2-5, crossing over]
	\arrow["{(F'_{\bar{X}},\mathfrak{f}'_{\bar{X}})}", from=1-3, to=2-5]
\end{tikzcd}\]
\[\begin{tikzcd}
	&& {\underline{P}^{\mathcal{U}}} \\
	\underline{P} &&&& {\underrightarrow{P}} \\
	&& {\underline{P}^{\mathcal{V}}}
	\arrow["{(\id{},\mathfrak{f}_{\mathcal{U}})}", from=2-1, to=1-3]
	\arrow["{(\id{},\mathfrak{f}_{\mathcal{V}})}"', from=2-1, to=3-3]
	\arrow["{(\id{},\mathfrak{q}_{\mathcal{V}})}"', from=3-3, to=2-5]
	\arrow["{{(\id{},\mathfrak{f}_{\mathcal{UV}})}}"{description, pos=0.6}, from=1-3, to=3-3]
	\arrow["{(\id{},\underrightarrow{\mathfrak{f}})}"{description, pos=0.3}, bend left=15, from=2-1, to=2-5, crossing over]
	\arrow["{(\id{},\mathfrak{q}_{\mathcal{U}})}", from=1-3, to=2-5]
\end{tikzcd}.\]

Using the universal properties of the colimit diagrams, and the universal properties of the arrows $(F_{\bar{X}},\mathfrak{f}_{\bar{X}}):P\to P_{\Pi X_a}$ and $(\id{},\mathfrak{f}_{\mathcal{U}}):\underline{P}\to\underline{P}^{\mathcal{U}}$, we know that defining a doctrine morphism $(G,\mathfrak g):\underrightarrow{P}\to R$ is equivalent to defining a doctrine morphism $(G,\mathfrak s):\underline{P}\to R$ such that $\mathfrak{s}:\underline{P}\xrightarrow{\cdot} RG$ maps each $\ecc{(\psi^{B}_{j},\bar{X}^{(B,j)}_\star)}\in \underline{P}(\tmn)$ to the top element of $R(\tmn_\ct{D})$. However, defining $(G,\mathfrak s):\underline{P}\to R$ is equivalent to choosing a doctrine morphism $P\to R$---let it be $(H,\mathfrak h):P\to R$---and a choice of a constant $\tmn_\ct{D}\to HY$ in $\ct{D}$ for every $(Y,\lambda)\in\text{ob}\CC\times\Lambda$. If we manage to do so, we will have a diagram as below, where all triangles commute:
\[\begin{tikzcd}[column sep=scriptsize]
	P && {\underline{P}} && {\underrightarrow{P}} && R
	\arrow["{(\id{},\underrightarrow{\mathfrak f})}"'xshift=-3pt, from=1-3, to=1-5]
	\arrow["{(G,\mathfrak g)}"', dashed, from=1-5, to=1-7]
	\arrow["{(F,\mathfrak f)}"', curve={height=24pt}, from=1-1, to=1-5]
	\arrow["{(G,\mathfrak s)}"{description, pos=0.3}, curve={height=-18pt}, dashed, from=1-3, to=1-7]
	\arrow["{(H,\mathfrak h)}", curve={height=-30pt}, from=1-1, to=1-7]
	\arrow["{(\underline{F},\underline{\mathfrak f})}"', from=1-1, to=1-3]
\end{tikzcd}.\]
So to recap, our goal is to find a suitable choice of constants in the base category of $R$, such that the induced doctrine morphism $(G,\mathfrak s):\underline{P}\to R$ maps each $\ecc{(\psi^{B}_{j},\bar{X}^{(B,j)}_\star)}$ to the top element of $R(\tmn_\ct{D})$.

First of all, fix a well-ordering of $\text{ob}\CC$, and consider the lexicographic order on $\text{ob}\CC\times\Lambda$.
Recall that for any given object $B$ we have $\underline{P}(B)=\left\{\ecc{(\varphi^{B}_j,\bar{X}^{(B,j)})}\right\}_{j\in\Lambda}$ where
\begin{equation*}\varphi^{B}_j\in P(\prod_{a=1}^{n^{(B,j)}}X_a^{(B,j)}\times B).\end{equation*}

We start from $(B_0,0)$: consider $\varphi^{B_0}_0\in P(\prod_{a=1}^{n^{(B_0,0)}}X_a^{(B_0,0)}\times B_0)$ which is used to define the last entry of the list $\bar{X}_\star^{(B_0,0)}$ in \Cref{sect:label}. Take $\mathfrak{h}_{\Pi X_a\times B_0}\varphi^{B_0}_0\in R(\prod HX^{(B_0,0)}_a\times H B_0)$, hence there exists a constant in $\mathbb{D}$---which is actually a list of constants
\begin{equation*}c^{(B_0,0)}=\ple{c_{(X_1^{(B_0,0)},x_1^{(B_0,0)})},\dots,c_{(X_{n^{(B_0,0)}}^{(B_0,0)},x_{n^{(B_0,0)}}^{(B_0,0)})},c_{(B_0,d_0^{B_0})}}:\tmn_\ct{D}\to\prod_{a=1}^{n^{(B_0,0)}}HX_a^{(B_0,0)}\times HB_0\end{equation*}
such that
\begin{equation*}\exists^{\Pi HX_a\times HB_0}_{\tmn_\ct{D}}\mathfrak{h}_{\Pi X_a\times B_0}\varphi^{B_0}_0\leq R(c^{(B_0,0)})\mathfrak{h}_{\Pi X_a\times B_0}\varphi^{B_0}_0\end{equation*}
by using the richness property of $R$.
This defines an assignment $(Y,\lambda)\mapsto (c_{(Y,\lambda)}:\tmn_\ct{D}\to HY)$ for some pair $(Y,\lambda)$---the ones of the kind $(X_a^{(B_0,0)},x_a^{(B_0,0)})$ for $a=1,\dots,n^{(B_0,0)}$, and of the kind $(B_0,d^{B_0}_0)$: our goal is to extend this to every pair of such kinds.
Consider now $(B,j)>(B_0,0)$---i.e.\ $B>B_0$ in $\text{ob}\CC$, or $B=B_0$ and $j>0$---, and take $\varphi^{B}_j\in P(\prod_{a=1}^{n^{(B,j)}}X_a^{(B,j)}\times B)$. Take all the pairs $(X_b^{(B,j)},x_b^{(B,j)})$ that have already appeared as subscripts in the components of some $c^{(A,i)}$ for some $(A,i)<(B,j)$. Their indexes form a subset $K^{(B,j)}\subseteq\{1,\dots,n^{(B,j)}\}$.
Evaluate the element $\mathfrak{h}_{\Pi X_a\times B}\varphi^{B}_j\in R(\prod_{a=1}^{n^{(B,j)}}HX_a^{(B,j)}\times HB)$ in the corresponding constants:
\begin{equation}\label{eq:Rdots}
\begin{tikzcd}[row sep=small]
	{R(\ple{\pr1,\dots,c_{(X_b^{(B,j)},x_b^{(B,j)})},\dots,\pr{n^{(B,j)}},\pr{n^{(B,j)}+1}})\mathfrak{h}_{\Pi X_a\times B}\varphi^{B}_j} & {} \\
	{R(HX_1^{(B,j)}\times\dots\times \widehat{HX_b^{(B,j)}}\times\dots HX_{n^{(B,j)}}^{(B,j)}\times HB)}
	\arrow["{\text{\rotatebox{90}{$\ni$}}}"{description}, draw=none, from=1-1, to=2-1]
\end{tikzcd}\end{equation}
where each $\widehat{HX_b^{(B,j)}}$ for $b\in K^{(B,j)}$ is the terminal object $\tmn_\ct{D}$. Let
\begin{equation*}\widehat{\prod}HX_a^{(B,j)}=\prod_{a\notin K^{(B,j)}}HX_a^{(B,j)},\end{equation*}
and observe that there exists a canonical isomorphism
\begin{equation*}
\omega_{(B,j)}:\widehat{\prod}HX_a^{(B,j)}\times HB\longrightarrow HX_1^{(B,j)}\times\dots\times \widehat{HX_b^{(B,j)}}\times\dots HX_{n^{(B,j)}}^{(B,j)}\times HB.
\end{equation*}
So now there exists a list of constants
\begin{equation*}c^{(B,j)}=\ple{\dots,c_{(X_a^{(B,j)},x_a^{(B,j)})},\dots,c_{(B,d_j^{B})}}:\tmn_\ct{D}\to\widehat{\prod}HX_a^{(B,j)}\times HB\end{equation*}
such that
\begin{equation}\label{eq:wit}
\begin{aligned}\exists^{\widehat{\Pi}HX_a\times HB}_{\tmn_\ct{D}} R(\omega_{(B,j)})R(\ple{\pr1,\dots,c_{(X_b^{(B,j)},x_b^{(B,j)})},\dots,\pr{n^{(B,j)}+1}})\mathfrak{h}_{\Pi X_a\times B}\varphi^{B}_j \\
\leq R(c^{(B,j)})R(\omega_{(B,j)})R(\ple{\pr1,\dots,c_{(X_b^{(B,j)},x_b^{(B,j)})},\dots,\pr{n^{(B,j)}+1}})\mathfrak{h}_{\Pi X_a\times B}\varphi^{B}_j\end{aligned}\end{equation}
by using again the richness property of $R$. Note that the projections and constants that appear in the argument of $R$ here in \eqref{eq:wit} are the same that appear in \eqref{eq:Rdots}.

In this way, we are able to define $c_{(Y,\lambda)}\colon\tmn_\ct{D}\to HY$ for all pairs $(Y,\lambda)$ of the kind $(X_a^{(B,j)},x_a^{(B,j)})$ with $a\notin K^{(B,j)}$, and of the kind $(B,d^B_j)$.
Once completed the assignments given by all pairs $(B,j)\in\mathrm{ob}\CC\times\Lambda$, extend then the assignment $(Y,\lambda)\mapsto c_{(Y,\lambda)}$ to all the remaining pairs by choosing any constant $c_{(Y,\lambda)}:\tmn_\ct{D}\to HY$. To do so, recall that since $R$ is rich, for any object $D$ in $\ct{D}$ there exists a map $\tmn_\ct{D}\to D$. This choice of constants defines a unique $(G,\mathfrak s):\underline{P}\to R$ such that $(G,\mathfrak s)(\underline{F},\underline f)=(H,\mathfrak h)$, and such that for any $\bar{X}=\big((X_1,x_1),\dots,(X_n,x_n)\big)\in J$ we have
\[G([\id{\Pi X_a},\bar{X}]:\tmn\dashrightarrow\prod_{a=1}^nX_a)=(\ple{c_{(X_1,x_1)},\dots,c_{(X_n,x_n)}}:\tmn_\ct{D}\to\prod_{a=1}^nHX_a).\]

Now, consider the $\underline{\CC}$-arrow
\begin{equation*}\ecc{(\id{},\bar{X}^{(B,j)}_\star)}:\tmn\dashrightarrow \prod_{a=1}^{n^{(B,j)}}X_a^{(B,j)}\times B,\end{equation*}
equivalence class of the identity arrow in $\CC$
\begin{equation*}\id{} :\prod_{a=1}^{n^{(B,j)}}X_a^{(B,j)}\times B\to \prod_{a=1}^{n^{(B,j)}}X_a^{(B,j)}\times B\end{equation*}
with respect to the list $\bar{X}^{(B,j)}_\star=\big((X_1^{(B,j)},x_1^{(B,j)}),\dots,(X_{n^{(B,j)}}^{(B,j)},x_{n^{(B,j)}}^{(B,j)}),(B,d^B_j)\big)$.
The reindexing in $\underline{P}$ along this map is the evaluation in the corresponding new constants added by the colimit.
Compute now $\mathfrak{s}_{\tmn}\ecc{(\psi^{B}_{j},\bar{X}^{(B,j)}_\star)}$, using the naturality of $\mathfrak{s}$ and the commutativity of the triangle $(H,\mathfrak{h})=(G,\mathfrak{s})\circ(\underline{F},\underline{\mathfrak{f}})$:
\begin{align*}\mathfrak{s}_{\tmn}\ecc{(\psi^{B}_{j},\bar{X}^{(B,j)}_\star)}&=\mathfrak{s}_{\tmn}\underline{P}\left(\ecc{(\id{},\bar{X}^{(B,j)}_\star)}\right)\ecc{(\psi^{B}_{j},\emptyset)}\\
&=RG\left(\ecc{(\id{},\bar{X}^{(B,j)}_\star)}\right)\mathfrak{s}_{\Pi X_a\times B}\ecc{(\psi^{B}_{j},\emptyset)}\\
&=R(\ple{c_{(X_1^{(B,j)},x_1^{(B,j)})},\dots,c_{(X_{n^{(B,j)}}^{(B,j)},x_{n^{(B,j)}}^{(B,j)})},c_{(B,d_j^{B})}})\mathfrak{h}_{\Pi X_a\times B}\psi^B_j.\end{align*}
Note that we removed the superscripts in the objects of the natural transformation to lighten the notation. We also write $\overline c$ for the list of $\mathbb{D}$-constants $\ple{c_{(X_1^{(B,j)},x_1^{(B,j)})},\dots,c_{(X_{n^{(B,j)}}^{(B,j)},x_{n^{(B,j)}}^{(B,j)})},c_{(B,d_j^{B})}}$ for simplicity, and recall that
\begin{equation*}\psi^B_j= P(\ple{\pr1,\dots,\pr{n^{(B,j)}}})\exists^{B}_{\Pi X_a^{(B,j)}}\varphi^{B}_j\longrightarrow\varphi^{B}_j.\end{equation*}
So $\top\leq\mathfrak{s}_{\tmn}\ecc{(\psi^{B}_{j},\bar{X}^{(B,j)}_\star)}$ if and only if
\begin{equation*}R(\overline c)\mathfrak{h}_{\Pi X_a\times B}P(\ple{\pr1,\dots,\pr{n^{(B,j)}}})\exists^{B}_{\Pi X_a}\varphi^{B}_j\leq R(\overline c)\mathfrak{h}_{\Pi X_a\times B}\varphi^{B}_j;\end{equation*}
using naturality of $\mathfrak{h}$ and the fact that $H$ preserves products, and then the fact that $\mathfrak{h}$ preserves the existential quantifier, we get
\begin{align*}R(\overline c)\mathfrak{h}_{\Pi X_a\times B}P(\ple{\pr1,\dots,\pr{n^{(B,j)}}})\exists^{B}_{\Pi X_a}\varphi^{B}_j=&R(\overline c)R(\ple{\pr1,\dots,\pr{n^{(B,j)}}})\mathfrak{h}_{\Pi X_a}\exists^{B}_{\Pi X_a}\varphi^{B}_j\\
=&R(\overline c)R(\ple{\pr1,\dots,\pr{n^{(B,j)}}})\exists^{HB}_{\Pi HX_a}\mathfrak{h}_{\Pi X_a\times B}\varphi^{B}_j,\end{align*}
so we need to prove
\begin{equation}\label{eq:claim}R(\ple{c_{(X_1^{(B,j)},x_1^{(B,j)})},\dots,c_{(X_{n^{(B,j)}}^{(B,j)},x_{n^{(B,j)}}^{(B,j)})}})\exists^{HB}_{\Pi HX_a}\mathfrak{h}_{\Pi X_a\times B}\varphi^{B}_j\leq R(\overline c)\mathfrak{h}_{\Pi X_a\times B}\varphi^{B}_j.\end{equation}
Observe that in the right-hand side of \eqref{eq:claim} we have exactly the same element of the right-hand side of \eqref{eq:wit}.
\[\begin{tikzcd}
	\tmn_\ct{D} \\
	{\widehat{\prod}HX_a^{(B,j)}\times HB} \\
	{HX_1^{(B,j)}\times\dots\times \widehat{HX_b^{(B,j)}}\times\dots HX_{n^{(B,j)}}^{(B,j)}\times HB} \\
	{HX_1^{(B,j)}\times\dots\times {HX_b^{(B,j)}}\times\dots HX_{n^{(B,j)}}^{(B,j)}\times HB}
	\arrow["{c^{(B,j)}=\ple{\dots,c_{(X_a^{(B,j)},x_a^{(B,j)})},\dots,c_{(B,d_j^{B})}}}", from=1-1, to=2-1]
	\arrow["{\omega_{(B,j)}}","\rotatebox{-90}{$\sim$}"', from=2-1, to=3-1]
	\arrow["{\ple{\pr1,\dots,c_{(X_b^{(B,j)},x_b^{(B,j)})},\dots,\pr{n^{(B,j)}+1}}}", from=3-1, to=4-1]
	\arrow["{\overline c}"', curve={height=170pt}, from=1-1, to=4-1]
\end{tikzcd}\]

So it is enough to prove
\begin{align*}&R(\ple{c_{(X_1^{(B,j)},x_1^{(B,j)})},\dots,c_{(X_{n^{(B,j)}}^{(B,j)},x_{n^{(B,j)}}^{(B,j)})}})\exists^{HB}_{\Pi HX_a}\mathfrak{h}_{\Pi X_a\times B}\varphi^{B}_j\\
&\leq\exists^{\widehat{\Pi}HX_a\times HB}_{\tmn_\ct{D}} R(\omega_{(B,j)})R(\ple{\pr1,\dots,c_{(X_b^{(B,j)},x_b^{(B,j)})},\dots,\pr{n^{(B,j)}}}\times\id{HB})\mathfrak{h}_{\Pi X_a\times B}\varphi^{B}_j.\end{align*}

Write $\tau$ for the list $\ple{\pr1,\dots,c_{(X_b^{(B,j)},x_b^{(B,j)})},\dots,\pr{n^{(B,j)}}}$, so that
\begin{equation*}\ple{\pr1,\dots,c_{(X_b^{(B,j)},x_b^{(B,j)})},\dots,\pr{n^{(B,j)}+1}}=\tau\times\id{HB},\end{equation*}
write $\sigma$ for every component except for the last one for the map $c^{(B,j)}$, so that $c^{(B,j)}=\ple{\sigma,c_{(B,d^B_j)}}$, and write $\omega'$ for the canonical isomorphism
\begin{equation*}\widehat{\prod}HX_a^{(B,j)}\longrightarrow HX_1^{(B,j)}\times\dots\times \widehat{HX_b^{(B,j)}}\times\dots HX_{n^{(B,j)}}^{(B,j)},\end{equation*}
so that $\omega_{(B,j)}=\omega'\times\id{HB}$. In particular $\overline c=(\tau\times\id{HB})(\omega'\times\id{HB})c^{(B,j)}$, so we can rewrite  the list $\ple{c_{(X_1^{(B,j)},x_1^{(B,j)})},\dots,c_{(X_{n^{(B,j)}}^{(B,j)},x_{n^{(B,j)}}^{(B,j)})}}$ as follows:

\begin{align*}
\ple{c_{(X_1^{(B,j)},x_1^{(B,j)})},\dots,c_{(X_{n^{(B,j)}}^{(B,j)},x_{n^{(B,j)}}^{(B,j)})}}&=\ple{\pr1,\dots,\pr{n^{(B,j)}}}\overline{c}\\
&=\ple{\pr1,\dots,\pr{n^{(B,j)}}}(\tau\times\id{HB})(\omega'\times\id{HB})c^{(B,j)}\\
&=\ple{\pr1,\dots,\pr{n^{(B,j)}}}(\tau\omega'\times\id{HB})c^{(B,j)}\\&=\tau\omega'\sigma.
\end{align*}

So now we have:
\begin{align*}R(\ple{c_{(X_1^{(B,j)},x_1^{(B,j)})},\dots,c_{(X_{n^{(B,j)}}^{(B,j)},x_{n^{(B,j)}}^{(B,j)})}})\exists^{HB}_{\Pi HX_a}&=R(\sigma)R(\tau\omega')\exists^{HB}_{\Pi HX_a}\\
&=R(\sigma)\exists^{HB}_{\widehat{\Pi}HX_a}R(\tau\omega'\times\id{B});\end{align*}
hence, we are left to prove that $R(\sigma)\exists^{HB}_{\widehat{\Pi}HX_a}\leq\exists^{\widehat{\Pi}HX_a\times HB}_{\tmn_\ct{D}}$.

Now, since $\exists^{\widehat{\Pi}HX_a\times HB}_{\tmn_\ct{D}}=\exists^{\widehat{\Pi}HX_a}_{\tmn_\ct{D}}\exists^{HB}_{\widehat{\Pi}HX_a}$, we should prove $R(\sigma)\leq\exists^{\widehat{\Pi}HX_a}_{\tmn_\ct{D}}$, but this holds since $\sigma:\tmn_\ct{D}\to\widehat{\Pi}HX_a$ and we can apply $R(\sigma)$ to the unit $\id{R({\widehat{\Pi}HX_a})}\leq R(!_{\widehat{\Pi}HX_a})\exists^{\widehat{\Pi}HX_a}_{\tmn_\ct{D}}$.

Since we proved that $\top\leq\mathfrak{s}_{\tmn}\ecc{(\psi^{B}_{j},\bar{X}^{(B,j)}_\star)}$ for all $(B,j)$, we can define a unique $(G,\mathfrak g):\underrightarrow{P}\to R$ such that $(G,\mathfrak g)(\id{},\underrightarrow{\mathfrak f})=(G,\mathfrak s)$, hence in particular $(G,\mathfrak g)(F,\mathfrak{f})=(H,\mathfrak h)$.
To conclude, observe that since we defined $(G,\mathfrak{g})$ through directed colimits and constructions that add constants and axioms, implicational and existential structure are preserved by $(G,\mathfrak g)$; moreover, if $R$ has as additional structure any between bottom element, universal quantifier, elementary structure, preserved by $(H,\mathfrak{h})$, then also $(G,\mathfrak{g})$ does.
\end{proof}
We conclude the subsection extending \Cref{thm:weak_univ_prop} to 2-arrows.
\begin{proposition}
Let $P:\CC\op\to\Pos$ be an implicational existential doctrine with a small base category. Consider the 1-arrow $(F,\mathfrak f):P\to \underrightarrow{P}$, and let $(H,\mathfrak{h}):P\to R$ be an implicational existential morphism where $R:\mathbb{D}^{\op}\to\Pos$ is an implicational existential rich doctrine and let $(G,\mathfrak g):\underrightarrow{P}\to R$ be an implicational existential 1-arrow such that $(G,\mathfrak g)(F,\mathfrak f)=(H,\mathfrak h)$. Then the precomposition with $(F,\mathfrak f)$ induces an equivalence between the coslice categories
\begin{equation*}\blank\circ(F,\mathfrak f):(G,\mathfrak{g})\downarrow\Dott_{\land,\top,\to,\exists}(\underrightarrow{P},R)\longrightarrow(H,\mathfrak{h})\downarrow\Dott_{\land,\top,\to,\exists}(P,R).\end{equation*}
\end{proposition}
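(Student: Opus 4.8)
The plan is to prove that the precomposition functor $\Phi:=\blank\circ(F,\mathfrak f)$ is fully faithful and essentially surjective. The backbone of everything is the observation from \Cref{rmk:recap} that $F\colon\CC\to\underline{\CC}$ is the identity on objects: the base category of $\underrightarrow{P}$ has exactly the objects of $\CC$, so for any $2$-cell $\eta$ between morphisms $\underrightarrow{P}\to R$ the whiskered cell $\eta\ast(F,\mathfrak f)$ has component $\eta_{FA}=\eta_A$ at every object $A$. Faithfulness is then immediate, since a morphism of the source \emph{coslice} is determined by the family of its components and whiskering with $F$ loses none of them; thus $\Phi$ is injective on morphisms.

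For fullness I would take a morphism $\mu$ of the target coslice from $\Phi(\theta_1)$ to $\Phi(\theta_2)$ (with $\theta_i\colon(G,\mathfrak g)\Rightarrow(G_i',\mathfrak g_i')$) and define $\eta$ by $\eta_A:=\mu_A$, which is legitimate because $FA=A$. Naturality of $\eta$ at an arrow $F(g)$ is just naturality of $\mu$; naturality at one of the adjoined constants $\ecc{(\id{\Pi X_a},\bar{X})}:\tmn\dashrightarrow\prod X_a$ comes from the coslice relation $\mu\circ(\theta_1\ast F)=\theta_2\ast F$, read at the objects $\tmn$ and $\prod X_a$: since each $\theta_i$ is natural at that constant, $(\theta_i)_{\prod X_a}$ transports the constant chosen by $(G,\mathfrak g)$ to the one chosen by $(G_i',\mathfrak g_i')$, and composing the coslice identity with this exactly yields the naturality square for $\eta$. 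Because the base category of $\underrightarrow{P}$ is generated, under composition and finite products, by the image of $F$ together with these constants (a consequence of the colimit construction and \Cref{coroll:const}), naturality on generators propagates to all arrows. Finally the $2$-cell inequality for $\eta$ reduces to that for $\mu$: every $\xi\in\underrightarrow{P}(C)$ is the reindexing, along a constant-substitution arrow, of an element $\mathfrak f(\varphi)$ coming from $P$, and the inequality is stable under reindexing once naturality of $\eta$ at that substitution arrow is in hand.

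Essential surjectivity is the heart of the matter and the step I expect to be the main obstacle. Given a target-coslice object $\theta\colon(H,\mathfrak h)\Rightarrow(H',\mathfrak h')$, \Cref{thm:weak_univ_prop} already lifts the $1$-arrow $(H',\mathfrak h')$ to some $(G',\mathfrak g')\colon\underrightarrow{P}\to R$; what is missing is a lift of the $2$-cell $\theta$ to a cell $(G,\mathfrak g)\Rightarrow(G',\mathfrak g')$ whose whiskering is $\theta$. The naive attempt---transport the Henkin witnesses of $(G,\mathfrak g)$ along $\theta$ and reuse them for $(G',\mathfrak g')$---fails, and this is precisely the difficulty: a lax $2$-cell need not preserve the axioms $\exists x\varphi\to\varphi(c)$, and the Beck--Chevalley mate comparison for $\exists$ only supplies an inequality in the wrong direction, so a witness for $(H,\mathfrak h)$ need not push forward to a witness for $(H',\mathfrak h')$. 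The route I would pursue instead is to repackage $\theta$ as a single $1$-arrow from $P$ into a comma-type doctrine $R^{\rightarrow}$ classifying lax $2$-cells into $R$, to check that $R^{\rightarrow}$ is again implicational, existential and---crucially---rich whenever $R$ is, so that the required witnesses are produced \emph{afresh} from the richness of $R$ rather than transported, and then to apply \Cref{thm:weak_univ_prop} to $R^{\rightarrow}$ to obtain the lifted cell. The remaining delicate point, which the argument must address, is that \Cref{thm:weak_univ_prop} gives only a \emph{weak} universal property, so the source of the lift is a priori merely isomorphic to the prescribed $(G,\mathfrak g)$; reconciling the two via the uniqueness-up-to-unique-isomorphism clauses of \Cref{coroll:const,coroll:ax} at each finite stage of the two directed colimits is what turns the construction into genuine essential surjectivity and completes the equivalence.
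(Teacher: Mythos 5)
Your faithfulness and fullness sketches are in line with the paper's argument (identity-on-objects of $F$, naturality checked on the image of $F$ and on the adjoined constants), but the essential-surjectivity step---which you yourself call the heart of the matter---rests on a misdiagnosis. You assert that the naive attempt, transporting the Henkin witnesses of $(G,\mathfrak g)$ along the given $2$-cell, fails because a lax $2$-cell need not preserve the axioms $\exists x\varphi\to\varphi(c)$, and that the Beck--Chevalley mate for $\exists$ points the wrong way. This is exactly backwards: the paper's proof consists precisely of this transport. Given $\gamma:(H,\mathfrak h)\to(K,\mathfrak k)$, it sets $d_{(X,x)}:=\gamma_X\cdot c_{(X,x)}$ and verifies the Henkin axioms for the new constants in two lines: one already has $\top\leq R(\overline c)\,\mathfrak h_{\Pi X_a\times B}\,\psi^B_j$, the $2$-cell inequality gives $\mathfrak h_{\Pi X_a\times B}\,\psi^B_j\leq R(\gamma_{\Pi X_a\times B})\,\mathfrak k_{\Pi X_a\times B}\,\psi^B_j$, and applying $R(\overline c)$ together with $\gamma_{\Pi X_a\times B}\circ\overline c=\overline d$ (naturality of $\gamma$ plus preservation of products by $H$ and $K$) yields $\top\leq R(\overline d)\,\mathfrak k_{\Pi X_a\times B}\,\psi^B_j$. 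No mate comparison is needed: each axiom $\psi^B_j$ is a single element of a fiber over $\tmn$, and the laxness direction $\mathfrak h\leq R(\gamma)\mathfrak k$ built into the paper's definition of $2$-cells points exactly the right way to push its truth forward along the transported constants.

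Because the direct argument works, your substitute route through a comma-type doctrine $R^{\rightarrow}$ is not only unnecessary but, as written, not a proof: you neither construct $R^{\rightarrow}$ nor verify that it is implicational, existential and rich, and you explicitly leave open the reconciliation of the weak universal property of \Cref{thm:weak_univ_prop} with the prescribed $(G,\mathfrak g)$---a point that in fact never arises, since essential surjectivity does not require invoking \Cref{thm:weak_univ_prop} at all: one builds $(M,\mathfrak m)$ outright from the constants $\overline d$, and then checks (as you would still have to) that $\gamma$ itself is natural as a transformation $G\to M$---the relevant square splits into a square expressing naturality of $\gamma:H\to K$ and a square encoding $\gamma_X c_{(X,x)}=d_{(X,x)}$, using $\gamma_{\Pi X_a\times A}=\Pi\gamma_{X_a}\times\gamma_A$---and that the $2$-cell inequality holds, which again follows by reindexing $\mathfrak h\leq R(\gamma)\mathfrak k$ along the constant-substitution arrows $\ple{\overline c\cdot !,\id{HA}}$. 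So the central step of your proposal is a gap: a correct and short construction is declared impossible, and the proposed replacement is an unexecuted sketch.
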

\begin{proof}
Take any two objects $\gamma:(G,\mathfrak{g})\to(M,\mathfrak m),\mu:(G,\mathfrak{g})\to(N,\mathfrak n)\in(G,\mathfrak{g})\downarrow\Dott_{\land,\top,\to,\exists}(\underrightarrow{P},R)$, for some $(M,\mathfrak m),(N,\mathfrak n):\underrightarrow{P}\to R$; then take an arrow $\delta:\gamma\to \mu$. Since the functor $F$ acts as the identity on objects, precomposition with $F$ applied to the natural transformations $\gamma,\mu$ and $\delta$ is the identity:
\[\begin{tikzcd}
	{(G,\mathfrak{g})\downarrow\Dott_{\land,\top,\to,\exists}(\underrightarrow{P},R)} &&& {(H,\mathfrak{h})\downarrow\Dott_{\land,\top,\to,\exists}(P,R)}
	\arrow["{\blank\circ(F,\mathfrak f)}", from=1-1, to=1-4]
\end{tikzcd}\]
\[\begin{tikzcd}
	& {(G,\mathfrak g)} &&& {(H,\mathfrak h)} \\
	\\
	{(M,\mathfrak m)} && {(N,\mathfrak n)} & {(M,\mathfrak m)(F,\mathfrak f)} && {(N,\mathfrak n)(F,\mathfrak f)}
	\arrow["\gamma"', from=1-2, to=3-1]
	\arrow["\mu", from=1-2, to=3-3]
	\arrow["\delta"', from=3-1, to=3-3]
	\arrow["\gamma"', from=1-5, to=3-4]
	\arrow["\mu", from=1-5, to=3-6]
	\arrow["\delta"', from=3-4, to=3-6]
\end{tikzcd}.\]
In particular, the faithfulness of the precomposition functor follows trivially.
We show that the functor is essentially surjective.

Take a 2-arrow $\gamma:(H,\mathfrak{h})\to(K,\mathfrak{k})$ where $(K,\mathfrak{k}):P\to R$ is an implicational existential morphism. We want to find a morphism $(M,\mathfrak{m}):\underrightarrow{P}\to R$ and a 2-arrow $(G,\mathfrak{g})\to(M,\mathfrak{m})$, where $(M,\mathfrak{m})$ makes the triangle with $(K,\mathfrak{k})$ commute.
\[\begin{tikzcd}
	P && R \\
	& {\underrightarrow{P}}
	\arrow[""{name=0, anchor=center, inner sep=0}, "{(K,\mathfrak{k})}"', shift right=1, bend left=14, from=1-1, to=1-3]
	\arrow[""{name=1, anchor=center, inner sep=0}, "{(H,\mathfrak{h})}", shift left=4, bend left=14, from=1-1, to=1-3]
	\arrow["{(F,\mathfrak{f})}"', bend right=14, from=1-1, to=2-2]
	\arrow[""{name=2, anchor=center, inner sep=0}, "{(G,\mathfrak{g})}"{pos=0.3}, shift left=2, bend right=14, from=2-2, to=1-3]
	\arrow[""{name=3, anchor=center, inner sep=0}, "{(M,\mathfrak{m})}"'{pos=0.3}, shift right=3, bend right=14, dotted, from=2-2, to=1-3]
	\arrow[shorten <=2pt, shorten >=2pt, Rightarrow, dotted, from=2, to=3]
	\arrow["\gamma", shorten <=1pt, shorten >=1pt, Rightarrow, from=1, to=0]
\end{tikzcd}\]
Recall that $(G,\mathfrak{g})$ is uniquely determined by $(H,\mathfrak h)$ and a choice of $c_{(X,x)}:\tmn_\ct{D}\to HX$ for each $(X,x)\in J$. Moreover, having a 2-arrow $\gamma$ means that we have a natural transformation $\gamma:H\xrightarrow{\cdot}K$ such that $\mathfrak{h}_X\leq R(\gamma_X)\mathfrak{k}_X$ for all $X\in\text{ob}\CC$.
To define $(M,\mathfrak{m})$, we look for a constant $d_{(X,x)}:\tmn_\ct{D}\to KX$ for any $(X,x)\in J$ such that the corresponding induced map $\underline{P}\to R$ maps each $\ecc{(\psi^{B}_{j},\bar{X}^{(B,j)}_\star)}\in \underline{P}(\tmn)$ in the top element of $R(\tmn_\ct{D})$. Define $d_{(X,x)}\coloneqq \gamma_X\cdot c_{(X,x)}$, and then we check that in $R(\tmn_\ct{D})$
\begin{equation*}\top\leq R(\ple{d_{(X_1^{(B,j)},x_1^{(B,j)})},\dots,d_{(X_{n^{(B,j)}}^{(B,j)},x_{n^{(B,j)}}^{(B,j)})},d_{(B,d_j^{B})}})\mathfrak{k}_{\Pi X_a\times B}\psi^B_j.\end{equation*}
By using naturality of $\gamma$ and the fact that both $H$ and $K$ preserve products, we get the following commutative triangle
\[\begin{tikzcd}
	\tmn_\ct{D} && {\prod KX_a\times KB} \\
	& {\prod HX_a\times HB}
	\arrow["{\ple{\dots,d_{(X_i^{(B,j)},x_i^{(B,j)})},\dots,d_{(B,d_j^{B})}}}", from=1-1, to=1-3]
	\arrow["{\ple{\dots,c_{(X_i^{(B,j)},x_i^{(B,j)})},\dots,c_{(B,d_j^{B})}}}"', from=1-1, to=2-2]
	\arrow["{\gamma_{\Pi X_a\times B}}"', from=2-2, to=1-3]
\end{tikzcd}.\]
Then, using the definition of $c_{(X,x)}$'s and the fact that $\gamma$ is a 2-arrow we have:
\begin{align*}\top\leq& R(\ple{c_{(X_1^{(B,j)},x_1^{(B,j)})},\dots,c_{(X_{n^{(B,j)}}^{(B,j)},x_{n^{(B,j)}}^{(B,j)})},c_{(B,d_j^{B})}})\mathfrak{h}_{\Pi X_a\times B}\psi^B_j\\
\leq& R(\ple{c_{(X_1^{(B,j)},x_1^{(B,j)})},\dots,c_{(X_{n^{(B,j)}}^{(B,j)},x_{n^{(B,j)}}^{(B,j)})},c_{(B,d_j^{B})}})R(\gamma_{\Pi X_a\times B})\mathfrak{k}_{\Pi X_a\times B}\psi^B_j\end{align*}
as claimed, so we defined a morphism $(M,\mathfrak m)$ such that $(M,\mathfrak m)(F,\mathfrak f)=(K,\mathfrak k)$.

To conclude essential surjectivity, we show that $\gamma$ is actually a 2-arrow also between $(G,\mathfrak{g})$ and $(M,\mathfrak{m})$.
Take any $\underline{\CC}$-arrow $\ecc{(f,\bar{X})}:A\dashrightarrow B$, where $f:\prod_{a=1}^n X_a\times A\to B$ is a $\CC$-arrow and $\bar{X}=\big((X_1,x_1),\dots,(X_n,x_n)\big)$ is a list in $J$. Naturality means that the following square commutes:
\[\begin{tikzcd}
	GA & MA \\
	GB & MB
	\arrow["{G\ecc{(f,\bar{X})}}"', from=1-1, to=2-1]
	\arrow["{\gamma_A}"', from=1-1, to=1-2]
	\arrow["{M\ecc{(f,\bar{X})}}", from=1-2, to=2-2]
	\arrow["{\gamma_B}", from=2-1, to=2-2]
\end{tikzcd}.\]
Observe that the $\mathbb{D}$-arrow $\gamma_A:HA\to KA$ is indeed an arrow from $GA$ to $MA$, because the functors $G$ and $M$ act like $H$ and $K$ on objects respectively.
Use now the definition of $G\ecc{(f,\bar{X})}$ and $M\ecc{(f,\bar{X})}$, so that we need to prove the commutativity of the outer rectangle:
\[\begin{tikzcd}
	HA & KA \\
	{\prod HX_a\times HA} & {\prod KX_a\times KA} \\
	HB & KB
	\arrow["{\gamma_A}", from=1-1, to=1-2]
	\arrow["{\ple{\overline{c}\cdot !,\id{HA}}}"', from=1-1, to=2-1]
	\arrow["{H(f)}"', from=2-1, to=3-1]
	\arrow["{\gamma_B}", from=3-1, to=3-2]
	\arrow["{\ple{\overline{d}\cdot !,\id{KA}}}", from=1-2, to=2-2]
	\arrow["{K(f)}", from=2-2, to=3-2]
	\arrow["{\gamma_{\Pi X_a\times A}}", from=2-1, to=2-2,dotted]
\end{tikzcd}\]
where $\overline{c}=\ple{c_{(X_1,x_1)},\dots,c_{(X_n,x_n)}}$ and symilarly $\overline{d}=\ple{d_{(X_1,x_1)},\dots,d_{(X_n,x_n)}}$.
The rectangle can be easily divided into two commutative squares: the lower one is clearly commutative by naturality of $\gamma$, while the upper one is commutative too since $\gamma_{\Pi X_a\times A}=\Pi\gamma_{X_a}\times\gamma_A$ and $\ple{\overline{d}\cdot !,\id{KA}}=(\Pi\gamma_{X_a}\times\id{KA})\ple{\overline{c}\cdot !,\id{KA}}$. So we get $\gamma:G\xrightarrow{\cdot}M$, as claimed.

At last, we show that it is indeed a 2-arrow: take any $[\ecc{(\alpha,\bar{X})},\mathcal{U}]\in\underrightarrow{P}(A)$ for some element $\alpha\in P(\prod_{a=1}^nX_a\times A)$ and $\bar{X}=\big((X_1,x_1),\dots,(X_n,x_n)\big)\in J$, we prove that in $R(GA)$
\begin{equation*}\mathfrak{g}_A[\ecc{(\alpha,\bar{X})},\mathcal{U}]\leq R(\gamma_A)\mathfrak{m}_A[\ecc{(\alpha,\bar{X})},\mathcal{U}].\end{equation*}
Using the same notation we used above for $\overline{c}$ and $\overline{d}$, we compute:
\begin{align*}\mathfrak{g}_A[\ecc{(\alpha,\bar{X})},\mathcal{U}]&=\mathfrak{g}_A[\ecc{(\alpha,\bar{X})},\emptyset]\\
&=R(\ple{\overline{c}\cdot!,\id{HA}})\mathfrak{h}_{\Pi X_a\times A}\alpha\\
&\leq R(\ple{\overline{c}\cdot!,\id{HA}})R(\gamma_{\Pi X_a\times A})\mathfrak{k}_{\Pi X_a\times A}\alpha\\
&= R(\gamma_A)R(\ple{\overline{d}\cdot !,\id{KA}})\mathfrak{k}_{\Pi X_a\times A}\alpha\\
&=R(\gamma_A)\mathfrak{m}_A[\ecc{(\alpha,\bar{X})},\mathcal{U}]\end{align*}
as claimed.

It is left to prove that the functor $\blank\circ(F,\mathfrak f)$ is a full functor between the coslice categories.

Suppose to have $\gamma:(G,\mathfrak{g})\to(M,\mathfrak m),\mu:(G,\mathfrak{g})\to(N,\mathfrak n)\in(G,\mathfrak{g})\downarrow\Dott_{\land,\top,\to,\exists}(\underrightarrow{P},R)$, for some $(M,\mathfrak m),(N,\mathfrak n):\underrightarrow{P}\to R$. Moreover, let $\delta:(M,\mathfrak m)(F,\mathfrak f)\to(N,\mathfrak n)(F,\mathfrak f)$ be a 2-arrow making the triangle on the right commute.
\[\begin{tikzcd}
	& {(G,\mathfrak g)} &&& {(H,\mathfrak h)} \\
	\\
	{(M,\mathfrak m)} && {(N,\mathfrak n)} & {(M,\mathfrak m)(F,\mathfrak f)} && {(N,\mathfrak n)(F,\mathfrak f)}
	\arrow["\gamma"', from=1-2, to=3-1]
	\arrow["\mu", from=1-2, to=3-3]
	\arrow["\delta"',dashed, from=3-1, to=3-3]
	\arrow["\gamma"', from=1-5, to=3-4]
	\arrow["\mu", from=1-5, to=3-6]
	\arrow["\delta"', from=3-4, to=3-6]
\end{tikzcd}\]

We prove that $\delta$ is also a 2-arrow between $(M,\mathfrak{m})$ and $(N,\mathfrak{n})$. Similarly to what we did before, define for any $(X,x)\in J$ the \ct{D}-arrows $d_{(X,x)}:=M\ecc{({\id{X},(X,x)})}:\tmn_\ct{D}\to MX$ and $e_{(X,x)}:=N\ecc{({\id{X},(X,x)})}:\tmn_\ct{D}\to NX$. Apply naturality of $\gamma:G\to M$ and $\mu:G\to N$ to the arrow $\ecc{({\id{X},(X,x)})}$ to obtain respectively $\gamma_Xc_{(X,x)}=d_{(X,x)}$ and $\mu_Xc_{(X,x)}=e_{(X,x)}$. However, since $\delta_X\cdot\gamma_X=\mu_X$, we get
\begin{equation}\label{eq:dd=e}\delta_X d_{(X,x)}=e_{(X,x)}.\end{equation}
Now fix a $\underline{\CC}$-arrow $\ecc{(f,\bar{X})}:A\dashrightarrow B$, for a $\CC$-arrow $f:\prod_{a=1}^n X_a\times A\to B$ and for a list $\bar{X}=\big((X_1,x_1),\dots,(X_n,x_n)\big)$ in $J$.
Moreover we write $\overline{d}=\ple{d_{(X_1,x_1)},\dots,d_{(X_n,x_n)}}$ and symilarly $\overline{e}=\ple{e_{(X_1,x_1)},\dots,e_{(X_n,x_n)}}$. Naturality of $\delta:M\to N$ means that the following square commutes:

\[\begin{tikzcd}
	MA & NA \\
	{\prod MX_a\times MA} & {\prod NX_a\times NA} \\
	MB & NB
	\arrow["{\delta_A}", from=1-1, to=1-2]
	\arrow["{\ple{\overline{d}\cdot !,\id{MA}}}"', from=1-1, to=2-1]
	\arrow["{MF(f)}"', from=2-1, to=3-1]
	\arrow["{\delta_B}", from=3-1, to=3-2]
	\arrow["{\ple{\overline{e}\cdot !,\id{NA}}}", from=1-2, to=2-2]
	\arrow["{NF(f)}", from=2-2, to=3-2]
	\arrow["{\delta_{\Pi X_a\times A}}", from=2-1, to=2-2,dotted]
\end{tikzcd}.\]
Commutativity of the lower square follows from naturality of $\delta:MF\to NF$, while the upper square commutes if and only if $\delta_{X_i}d_{(X_i,x_i)}=e_{(X_i,x_i)}$, but this follows from \eqref{eq:dd=e}. This concludes the proof.\end{proof}

\subsection{Consistency of $\protect\underrightarrow{P}$}\label{sect:coher}
\begin{definition}
A doctrine $R:\ct{D}\op\to\Pos$ is \emph{consistent} if there exists a pair $a,b\in R(\tmn)$ such that $a\nleq b$. Moreover, $R$ is \emph{two-valued} if it is consistent and there exists a pair $a,b\in R(\tmn)$ such that $a\nleq b$ and for all $c\in R(\tmn)$ one has $a\leq c$ or $b\leq c$.
\end{definition}
\begin{center}
\fbox{\parbox{.5\textwidth}{\raggedright From now on, $P:\CC\op\to\Pos$ is a fixed bounded implicational existential consistent doctrine, with a small base category, unless otherwise specified.}}
\end{center}
Our goal is to show that the new doctrine $\underrightarrow{P}$ is consistent: we must be careful not to collapse fibers of $\underrightarrow{P}$ to the trivial poset.
\begin{lemma}
If $R:\ct{D}\op\to\Pos$ is a bounded doctrine. Then the following are equivalent:
\begin{enumerate}[label=\roman*.]
\item\label{i:1_const} $R(\tmn)\neq\{\star\}$;
\item\label{i:2_const} $\top_{\tmn}\nleq\bot_{\tmn}$;
\item\label{i:3_const} $R$ is consistent;
\item\label{i:4_const} $R$ is two-valued.
\end{enumerate}
\end{lemma}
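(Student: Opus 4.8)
The plan is to close the cycle $\ref{i:2_const}\Rightarrow\ref{i:4_const}\Rightarrow\ref{i:3_const}\Rightarrow\ref{i:1_const}\Rightarrow\ref{i:2_const}$, working entirely inside the single fiber $R(\tmn)$, which by hypothesis is a bounded poset with top $\top_\tmn$ and bottom $\bot_\tmn$. Two of the four arrows are immediate from the definitions: a two-valued doctrine is consistent by definition, giving $\ref{i:4_const}\Rightarrow\ref{i:3_const}$; and if $R$ is consistent there are $a,b\in R(\tmn)$ with $a\nleq b$, so in particular $a\neq b$ (reflexivity would otherwise force $a\leq b$), whence $R(\tmn)$ has at least two elements and $\ref{i:3_const}\Rightarrow\ref{i:1_const}$ follows.

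The two substantive steps are equally short. For $\ref{i:1_const}\Rightarrow\ref{i:2_const}$ I argue by contraposition: if $\top_\tmn\leq\bot_\tmn$, then for every $x\in R(\tmn)$ the boundedness inequalities $\bot_\tmn\leq x\leq\top_\tmn$ combine with $\top_\tmn\leq\bot_\tmn$ to force $x=\top_\tmn=\bot_\tmn$, so $R(\tmn)$ collapses to the singleton $\{\star\}$, contradicting $\ref{i:1_const}$. For $\ref{i:2_const}\Rightarrow\ref{i:4_const}$ I exhibit the witnessing pair explicitly: take $a=\top_\tmn$ and $b=\bot_\tmn$. The hypothesis $\ref{i:2_const}$ is precisely $a\nleq b$, and the two-valued condition ``for all $c$, $a\leq c$ or $b\leq c$'' holds trivially because $b=\bot_\tmn\leq c$ for every $c\in R(\tmn)$.

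There is no real obstacle here; the only things worth noticing are that the bottom element makes the disjunction in the definition of two-valued automatically true, so consistency and two-valuedness coincide for bounded doctrines, and that $\top_\tmn\leq\bot_\tmn$ is exactly the condition forcing the whole fiber to be trivial. All the reasoning takes place in $R(\tmn)$ and uses nothing beyond the order axioms together with the defining inequalities of $\top_\tmn$ and $\bot_\tmn$.
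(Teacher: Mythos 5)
Your proof is correct and matches the paper's argument in all substantive respects: the collapse argument for \ref{i:1_const}$\Rightarrow$\ref{i:2_const} and the witness pair $a=\top_\tmn$, $b=\bot_\tmn$ for \ref{i:2_const}$\Rightarrow$\ref{i:4_const} are exactly what the paper uses. The only difference is organizational---you close a single four-step cycle via \ref{i:3_const}$\Rightarrow$\ref{i:1_const}, whereas the paper proves \ref{i:1_const}$\Leftrightarrow$\ref{i:2_const} separately and routes \ref{i:3_const}$\Rightarrow$\ref{i:2_const} by contraposition---which is immaterial.
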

\begin{proof}
$\eqref{i:1_const}\Rightarrow\eqref{i:2_const}$ If $\top_{\tmn}\leq\bot_{\tmn}$, then for all $a\in R(\tmn)$ we have $\bot_{\tmn}\leq a\leq\top_{\tmn}\leq\bot_{\tmn}$, hence for all $a$ we have $a=\bot_{\tmn}$, hence $R(\tmn)$ is a singleton.

$\eqref{i:2_const}\Rightarrow\eqref{i:1_const}$ Trivial.

$\eqref{i:4_const}\Rightarrow\eqref{i:3_const}$ By definition.

$\eqref{i:3_const}\Rightarrow\eqref{i:2_const}$ If $\top_{\tmn}\leq\bot_{\tmn}$, then for all $a,b\in R(\tmn)$ we have $a\leq\top_{\tmn}\leq\bot_{\tmn}\leq b$, hence $R$ cannot be consistent.

$\eqref{i:2_const}\Rightarrow\eqref{i:4_const}$ Take $a=\top_{\tmn}$ and $b=\bot_{\tmn}$ and observe that for all $c\in R(\tmn)$ we have $b=\bot_{\tmn}\leq c$.
\end{proof}
\begin{remark}
Let $R$ be a bounded existential doctrine. If $R$ is consistent and rich, then each of its fibers is non-trivial---i.e.\ it is not a singleton.
Indeed, suppose $R(D)=\{\bot_D=\top_D\}$ for some object $D$ in the base category. Then there exists a $d:\tmn\to D$ such that $\exists^D_\tmn\top_D=R(d)\top_D=\exists^D_\tmn\bot_D=R(d)\bot_D$, in particular $\top_\tmn=\bot_\tmn$, which is absurd since $R$ is consistent.
\end{remark}
We want to find the conditions making $\underrightarrow{P}$ a consistent doctrine as well. Using the lemma above, we want $[\ecc{(\top,\emptyset)},\emptyset]\nleq[\ecc{(\bot,\emptyset)},\emptyset]$ in $\underrightarrow{P}(\tmn)$.

However, $[\ecc{(\top,\emptyset)},\emptyset]\leq[\ecc{(\bot,\emptyset)},\emptyset]$ if and only if there exists $\mathcal{U}=\{(B_1,j_1),\dots,(B_n,j_n)\}\in I$ such that
\begin{equation}\label{eq:abs}
\bigwedge_{i=1}^q\ecc{(\psi^{B_i}_{j_i},\bar{X}^{(B_i,j_i)}_\star)}\leq\ecc{(\bot,\emptyset)} \text{ in }\underline{P}(\tmn).
\end{equation}
We want to prove this to be a contradiction by induction on $q$.
If $q=0$, we get $\ecc{(\top,\emptyset)}\leq\ecc{(\bot,\emptyset)}$, i.e.\ there exists $\bar{Y}=\big((Y_1,y_1),\dots,(Y_m,y_m)\big)\in J$ such that in $P(\prod_{b=1}^mY_b)$
\begin{equation*}P(!_{\Pi Y_b})(\top)\leq P(!_{\Pi Y_b})(\bot),\end{equation*}
i.e.\ $\top\leq\bot$ in $P(\prod_{b=1}^mY_b)$. It follows from this that a stronger requirement on $P$ is needed: not only $P(\tmn)$ must not be a singleton, but also each $P(A)$ must not be a singleton, for every $A\in\text{ob}\CC$. Otherwise, $\underline{P}(\tmn)$ is trivial, hence also $\underrightarrow{P}(\tmn)$ is trivial.
So, from now on we suppose that $P$ has bottom element and has each $P(A)$ non-trivial.
\begin{center}
\fbox{\parbox{.5\textwidth}{\raggedright From now on, $P:\CC\op\to\Pos$ is a fixed bounded implicational existential doctrine, with non-trivial fibers, and with a small base category, unless otherwise specified.}}
\end{center}
With this additional assumption, we get a contradiction in the case $q=0$.
Suppose now $\eqref{eq:abs}$ to be a contradiction for $q$; we will take the rest of the section to understanding when it is the case that also $q+1$ gives a contradiction.
Suppose
\begin{equation*}\bigwedge_{i=1}^{q+1}\ecc{(\psi^{B_i}_{j_i},\bar{X}^{(B_i,j_i)}_\star)}\leq\ecc{(\bot,\emptyset)}\text{ in }\underline{P}(\tmn),\end{equation*}
\begin{equation*}\text{i.e.\ }\bigwedge_{i=1}^q\ecc{(\psi^{B_i}_{j_i},\bar{X}^{(B_i,j_i)}_\star)}\land\ecc{(\psi^{B_{q+1}}_{j_{q+1}},\bar{X}^{(B_{q+1},j_{q+1})}_\star)}\leq\ecc{(\bot,\emptyset)}\text{ in }\underline{P}(\tmn).\end{equation*}
For the sake of simplicity we write $\psi$ instead of $\psi^{B_{q+1}}_{j_{q+1}}$. Moreover, up to a permutation of the indices $i=1,\dots,q+1$, we can suppose that $d^{B_{q+1}}_{j_{q+1}}\geq d^{B_{i}}_{j_{i}}$ for $i=1,\dots,q$.

Compute $\bigwedge_{i=1}^q\ecc{(\psi^{B_i}_{j_i},\bar{X}^{(B_i,j_i)}_\star)}$ as the class of some $\theta$ paired with a list $\bar{T}$ of $J$ with entries in
\begin{equation*}\mathcal{F}\coloneqq\bigcup_{i=1}^q\left\{\left(X_a^{(B_i,j_i)},x_a^{(B_i,j_i)}\right)\right\}_{a=1}^{n^{(B_i,j_i)}}\cup\bigcup_{i=1}^q\left\{\left(B_i,d^{B_i}_{j_i}\right)\right\}.\end{equation*}
Then call
\begin{equation*}\mathcal{G}\coloneqq\left\{\left(X_a^{(B_{q+1},j_{q+1})},x_a^{(B_{q+1},j_{q+1})}\right)\right\}_{a=1}^{n^{(B_{q+1},j_{q+1})}};\end{equation*}
and rename the pairs:
\begin{align*}&\mathcal{F}\cap\mathcal{G}=\left\{\left(Z_b,z_b\right)\right\}_{b=1}^{\overline{b}},\\
&\mathcal{F}\smallsetminus(\mathcal{F}\cap\mathcal{G})=\left\{\left(W_c,w_c\right)\right\}_{c=1}^{\overline{c}},\\
&\mathcal{G}\smallsetminus(\mathcal{F}\cap\mathcal{G})=\left\{\left(V_e,v_e\right)\right\}_{e=1}^{\overline{e}}.\end{align*}
Observe that $(B_{q+1},d^{B_{q+1}}_{j_{q+1}})\notin\mathcal{G}\cup\mathcal{F}$: it does not belong to $\mathcal{G}$ by definition of $d^{B_{q+1}}_{j_{q+1}}$, it is different from all the pairs $(B_i,d^{B_i}_{j_i})$ for $i=1,\dots,q$ since we are taking the conjunction of $q+1$ formulae by assumption, and it is different from all the pairs $(X_a^{(B_i,j_i)},x_a^{(B_i,j_i)})$ for $i=1,\dots,q$ and $a=1,\dots,{n^{(B_i,j_i)}}$ since $d^{B_{q+1}}_{j_{q+1}}\geq d^{B_{i}}_{j_{i}}>x_a^{(B_i,j_i)}$ for $i=1,\dots,q$ and $a=1,\dots,n^{(B_i,j_i)}$.

From now on, we write $(B,d)$ instead of $(B_{q+1},d^{B_{q+1}}_{j_{q+1}})$ in order to lighten the notation. We compute $\ecc{(\theta,\bar{T})}\land\ecc{(\psi,\bar{X}^{(B,j_{q+1})}_\star)}$ as the equivalence class of an element in
\begin{equation*}\overunderbraces{&\br{2}{\mathcal{F}}}{P(&\prod W_c\times&\prod Z_b&\times\prod V_e&\times B)}{&&\br{2}{\mathcal{G}}}\end{equation*}
paired with the list
\begin{equation*}\bar{S}=\big(\dots,(W_c,w_c),\dots,(Z_b,z_b),\dots,(V_e,v_e),\dots,(B,d)\big).\end{equation*}
We can assume $\theta\in P(\Pi{W_c}\times\Pi{Z_b})$ and
\begin{equation*}\ecc{(\psi',({\dots,(Z_b,z_b),\dots,(V_e,v_e),\dots,{(B,d)})})}=\ecc{(\psi,\bar{X}^{(B,j_{q+1})}_\star)}\end{equation*}
where $\psi'\in P(\Pi{Z_b}\times\Pi{V_e}\times B)$ is a reindexing along a suitable permutation of $\psi$. We can do so by recalling that
\begin{equation*}\mathcal{G}=\left\{\left(X_a^{(B_{q+1},j_{q+1})},x_a^{(B_{q+1},j_{q+1})}\right)\right\}_{a=1}^{n^{(B_{q+1},j_{q+1})}}=\left\{\left(Z_b,z_b\right)\right\}_{b=1}^{\overline{b}}\cup\left\{\left(V_e,v_e\right)\right\}_{e=1}^{\overline{e}}.\end{equation*}
Then
\begin{equation*}\ecc{(\theta,\bar{T})}\land\ecc{(\psi,\bar{X}^{(B,j_{q+1})}_\star)}=\ecc{(P(\ple{\pr1,\pr2})\theta\land P(\ple{\pr2,\pr3,\pr4})\psi',\bar{S})}\in\underline{P}(\tmn).\end{equation*}
Then $\ecc{(\theta,\bar{T})}\land\ecc{(\psi,\bar{X}^{(B,j_{q+1})}_\star)}\leq\ecc{(\bot,\emptyset)}$ if and only if there exists a set $\left\{\left(Y_h,y_h\right)\right\}_{h=1}^{\overline{h}}$, disjoint from $\mathcal{F}\cup\mathcal{G}\cup\left\{\left(B,d\right)\right\}$ such that in $P(\Pi W_c\times\Pi Z_b\times\Pi V_e\times B\times \Pi Y_h)$ one has
\begin{equation*}P(\ple{\pr1,\pr2})\theta\land P(\ple{\pr2,\pr3,\pr4})\psi'\leq\bot\end{equation*}
if and only if in $P(\Pi W_c\times\Pi Z_b\times\Pi V_e\times\Pi Y_h\times B)$ one has
\begin{equation*}P(\ple{\pr1,\pr2})\theta\land P(\ple{\pr2,\pr3,\pr5})\psi'\leq\bot=P(\ple{\pr1,\pr2,\pr3,\pr4})\bot\end{equation*}
if and only if, using $\exists^B_{\Pi W\times\Pi Z\times\Pi V\times\Pi Y}\dashv P(\ple{\pr1,\pr2,\pr3,\pr4})$, in $P(\Pi W_c\times\Pi Z_b\times\Pi V_e\times\Pi Y_h)$ one has
\begin{equation*}\exists^B_{\Pi W\times\Pi Z\times\Pi V\times\Pi Y}\left(P(\ple{\pr1,\pr2})\theta\land P(\ple{\pr2,\pr3,\pr5})\psi'\right)\leq\bot;\end{equation*}
then use Frobenius reciprocity, and note that $P(\ple{\pr1,\pr2})=P(\ple{\pr1,\pr2,\pr3,\pr4})P(\ple{\pr1,\pr2})$ as the composition of the projections from $\Pi W_c\times\Pi Z_b\times\Pi V_e\times\Pi Y_h\times B$ to $\Pi W_c\times\Pi Z_b\times\Pi V_e\times\Pi Y_h$ to $\Pi W_c\times\Pi Z_b$ in order to get
\begin{equation*}\exists^B_{\Pi W\times\Pi Z\times\Pi V\times\Pi Y}P(\ple{\pr2,\pr3,\pr5})\psi'\land P(\ple{\pr1,\pr2})\theta\leq\bot.\end{equation*}
\begin{claim}\label{claim:wit}
$\top\leq\exists^B_{\Pi W\times\Pi Z\times\Pi V\times\Pi Y}P(\ple{\pr2,\pr3,\pr5})\psi'.$
\end{claim}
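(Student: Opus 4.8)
The plan is to unwind the element $\psi'$ and reduce the statement to a computation in a single fibre of $P$. Recall that, up to the permutation, $\psi'=P(\pr1)\exists^B_{\Pi Z\times\Pi V}\varphi'\to\varphi'$ for some $\varphi'\in P(\Pi Z\times\Pi V\times B)$, where $\pr1\colon\Pi Z\times\Pi V\times B\to\Pi Z\times\Pi V$ is the projection; I write $C\coloneqq\Pi Z\times\Pi V$ and $e'\coloneqq\exists^B_C\varphi'\in P(C)$. The reindexing $P(\ple{\pr2,\pr3,\pr5})$ only inserts the two dummy factors $\Pi W$ and $\Pi Y$, so $\ple{\pr2,\pr3,\pr5}=p\times\id{B}$ for the projection $p\colon\Pi W\times C\times\Pi Y\to C$. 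Applying the Beck--Chevalley condition to the pullback square relating $p\times\id{B}$ and $p$ gives
\[\exists^B_{\Pi W\times\Pi Z\times\Pi V\times\Pi Y}P(\ple{\pr2,\pr3,\pr5})\psi'=P(p)\exists^B_C\psi',\]
and since $P(p)\top_C=\top$ and $P(p)$ is monotone, it suffices to prove $\top_C\leq\exists^B_C\psi'$. Thus the dummy variables play no role and the claim is reduced to showing $\top_C\leq\exists^B_C\big(P(\pr1)e'\to\varphi'\big)$.

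The heart of the argument is a computation in the fibre $P(C)$, and it is here that I would use that the fibres are Boolean. Since reindexing preserves the Heyting (hence Boolean) operations, $P(\pr1)e'\to\varphi'=P(\pr1)(\lnot e')\lor\varphi'$. The left adjoint $\exists^B_C$ preserves joins, so
\[\exists^B_C\big(P(\pr1)(\lnot e')\lor\varphi'\big)=\exists^B_C P(\pr1)(\lnot e')\lor\exists^B_C\varphi'=\exists^B_C P(\pr1)(\lnot e')\lor e'.\]
Frobenius reciprocity applied to $\top_{C\times B}\land P(\pr1)(\lnot e')=P(\pr1)(\lnot e')$ yields $\exists^B_C P(\pr1)(\lnot e')=\exists^B_C\top_{C\times B}\land\lnot e'$. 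Substituting and using Boolean distributivity with $\lnot e'\lor e'=\top$, the expression equals $\exists^B_C\top_{C\times B}\lor e'$; and since $e'=\exists^B_C\varphi'\leq\exists^B_C\top_{C\times B}$ by monotonicity, this is exactly $\exists^B_C\top_{C\times B}$. Hence the claim becomes equivalent to the inhabitedness statement $\exists^B_C\top_{C\times B}=\top_C$.

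Finally I would establish this inhabitedness. Beck--Chevalley along $!_C\colon C\to\tmn$ reduces it to $\exists^B_\tmn\top_B=\top_\tmn$. The adjunction $\exists^B_\tmn\dashv P(!_B)$ rules out the value $\bot_\tmn$: were $\exists^B_\tmn\top_B=\bot_\tmn$, then every $\alpha\in P(B)$ would satisfy $\alpha\leq P(!_B)\bot_\tmn=\bot_B$, collapsing the fibre $P(B)$ against the non-triviality hypothesis. Concluding that the value is therefore $\top_\tmn$ is where I would invoke that the terminal fibre is two-valued (equivalently, in the intended models, that every sort is inhabited); applying $P(!_C)$ then returns $\top_C$ and finishes the reduction.

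The main obstacle is precisely the step $\lnot e'\lor e'=\top$: it is the only place Booleanness enters, and it is genuinely indispensable, since in a merely implicational doctrine $\exists^B_C\big(P(\pr1)e'\to\varphi'\big)$ equals $e'\lor\lnot e'$ rather than $\top$. This is why the present direct computation only underlies the Boolean result \Cref{prop:coher_bool}, whereas consistency in the general implicational setting must instead be deduced from the weak universal property of \Cref{thm:weak_univ_prop}, by mapping $\underrightarrow{P}$ into a two-valued model. The secondary delicate point is isolating exactly which standing hypothesis secures $\exists^B_\tmn\top_B=\top_\tmn$, and I would phrase that step so that it draws only on the consistency and non-triviality assumptions already in force.
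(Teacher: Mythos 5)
Your Boolean manipulations inside the fibre are sound, but the opening reduction breaks the proof. When you use Beck--Chevalley to write $\exists^B_{\Pi W\times\Pi Z\times\Pi V\times\Pi Y}P(\ple{\pr2,\pr3,\pr5})\psi'=P(p)\exists^B_C\psi'$ and declare that ``the dummy variables play no role'', you replace the claim by the strictly stronger statement $\top_C\leq\exists^B_C\psi'$, which---as your own computation then shows---is equivalent to the inhabitedness statement $\exists^B_{\tmn}\top_B=\top_\tmn$. That statement is \emph{not} derivable from the standing hypotheses (Boolean, existential, implicational, non-trivial fibers, small base): non-triviality only rules out $\exists^B_\tmn\top_B=\bot_\tmn$, and nothing forces $P(\tmn)$ to have just two elements at this stage. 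Concretely, take $\CC$ the category of pairs of sets $(A_1,A_2)$ with $A_1\neq\emptyset$ and $P(A_1,A_2)=\mathscr{P}(A_1)\times\mathscr{P}(A_2)$ with componentwise structure: every fiber is non-trivial and the doctrine is Boolean existential, yet for $B=(1,\emptyset)$ one gets $\exists^B_\tmn\top_B=(\top,\bot)$, strictly between $\bot$ and $\top$ in $P(\tmn)\cong 2\times 2$. Your fallback---invoking that ``the terminal fibre is two-valued''---imports a hypothesis that is simply not in force here (and note that the paper's notion of two-valued is, for bounded doctrines, equivalent to consistency and would not help either); genuine two-valuedness is only manufactured much later, via the ultrafilter quotient.

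The paper's proof of \Cref{claim:wit} succeeds precisely because it does \emph{not} discard the dummy factor $\Pi Y$: the list $\{(Y_h,y_h)\}$ arises from an inequality between equivalence classes in $\underline{P}(\tmn)$, so it may be freely \emph{enlarged}, and the paper adds a fresh pair $(B,k)$ so that the context $\Pi X\times\Pi Y$ contains a factor of sort $B$. The key inequality $\gamma\leq\exists^B_{\Pi X\times\Pi Y}P(\ple{\pr1,\pr2})\gamma$ is then obtained with no inhabitedness assumption at all, by taking the unit $P(\ple{\pr1,\pr2,\pr3})\gamma\leq P(\ple{\pr1,\pr2,\pr3})\exists^B_{\Pi X\times\Pi Y}P(\ple{\pr1,\pr2,\pr3})\gamma$ and reindexing along $P(\ple{\pr1,\pr2,\pr3,\pr3})$---that is, the existing context variable of sort $B$ serves as the Henkin witness. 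This is exactly the mechanism your reduction deletes, and it is why the dummy variables are not dummies: one of them carries the witness. (Your closing remark is also slightly off: in the general implicational case the paper deduces consistency in \Cref{prop:coher} by mapping $\underrightarrow{P}$ into $\underrightarrow{P_{\lnot\lnot}}$ via the weak universal property of \Cref{thm:weak_univ_prop} and applying \Cref{prop:coher_bool} there, not by mapping into a two-valued model.)
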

If this is the case, then we get $P(\ple{\pr1,\pr2})\theta\leq\bot$, hence we have
\begin{align*}[P(\ple{\pr1,\pr2})\theta,(\dots,(W_c,w_c),\dots,(Z_b,z_b),\dots,(V_e,v_e),\dots,(Y_h,y_h),\dots)]&=\ecc{(\theta,\bar{T})}\\
&=\bigwedge_{i=1}^q\ecc{(\psi^{B_i}_{j_i},\bar{X}^{(B_i,j_i)}_\star)}\\
&\leq\ecc{(\bot,\emptyset)}
\end{align*}
which is $\eqref{eq:abs}$, a contradiction for our inductive hypothesis.

Now recall the definition of
\begin{equation*}\psi=\psi^{B_{q+1}}_{j_{q+1}}=P(\pr1)\exists^{B}_{\Pi X_a^{(B_{q+1},j_{q+1})}}\varphi^{B_{q+1}}_{j_{q+1}}\longrightarrow\varphi^{B_{q+1}}_{j_{q+1}}.\end{equation*}

Using the same permutation that defines $\psi'$ and Beck-Chevalley condition, \Cref{claim:wit} becomes equivalent to
\begin{equation*}\top\leq\exists^B_{\Pi W\times\Pi X\times\Pi Y}P(\ple{\pr2,\pr4})\psi.\end{equation*}
We then compute
\begin{align*}\exists^B_{\Pi W\times\Pi X\times\Pi Y}P(\ple{\pr2,\pr4})\psi&=\exists^B_{\Pi W\times\Pi X\times\Pi Y}P(\ple{\pr2,\pr3,\pr4})P(\ple{\pr1,\pr3})\psi\\
&=P(\ple{\pr2,\pr3})\exists^B_{\Pi X\times\Pi Y}P(\ple{\pr1,\pr3})\psi,\end{align*}
so it is sufficient to prove $\top\leq\exists^B_{\Pi X\times\Pi Y}P(\ple{\pr1,\pr3})\psi$. Substituting $\psi$ with its definition, omitting superscripts and subscripts of $\varphi^{B_{q+1}}_{j_{q+1}}$ and $X_a^{(B_{q+1},j_{q+1})}$ we want to prove the following
\begin{claim}\label{claim}
$\top\leq\exists^B_{\Pi X\times\Pi Y}\Big(P(\pr1)\exists^B_{\Pi X}\varphi\longrightarrow P(\ple{\pr1,\pr3})\varphi\Big)$ in $P(\Pi X\times\Pi Y)$.
\end{claim}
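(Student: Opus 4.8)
Identifying the target: writing $\eta:=\exists^B_{\Pi X}\varphi\in P(\Pi X)$, the right-hand side is the internal form of the parametrised ``drinker'' principle $\exists b\,\bigl((\exists y\,\varphi(\vec x,y))\to\varphi(\vec x,b)\bigr)$, with $\vec x$ ranging over $\Pi X$ and the factor $\Pi Y$ a mere spectator. This is classically but not intuitionistically valid, so the plan is to argue in the Boolean setting (this is the route to \Cref{prop:coher_bool}); in a merely implicational doctrine Frobenius reciprocity only yields the lower bound $\eta\leq\exists^B_{\Pi X}(P(\pr1)\eta\to\varphi)$, which is not enough.

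First I would discard the spectator factor $\Pi Y$. Since $\varphi$ does not involve $\Pi Y$, the inner element over $\Pi X\times\Pi Y\times B$ is $P(\pr1\times\id{B})$ of the element $P(\pr1)\eta\to\varphi$ over $\Pi X\times B$; by the Beck--Chevalley identity $\exists^B_{\Pi X\times\Pi Y}P(\pr1\times\id{B})=P(\pr1)\exists^B_{\Pi X}$ applied to it, and since $P(\pr1)$ preserves $\top$, the claim follows once one proves, in $P(\Pi X)$,
\[\top\leq\exists^B_{\Pi X}\bigl(P(\pr1)\eta\to\varphi\bigr),\qquad \eta=\exists^B_{\Pi X}\varphi.\]

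Next I would compute the right-hand side inside the Boolean fibre. Using $a\to c=\lnot a\lor c$ and that reindexing is a Boolean homomorphism, $P(\pr1)\eta\to\varphi=P(\pr1)\lnot\eta\lor\varphi$. As $\exists^B_{\Pi X}$ is a left adjoint it preserves finite joins, so the element equals $\exists^B_{\Pi X}\bigl(P(\pr1)\lnot\eta\bigr)\lor\exists^B_{\Pi X}\varphi$. By Frobenius reciprocity (with $\alpha=\top$, $\beta=\lnot\eta$) the first term is $\lnot\eta\land\exists^B_{\Pi X}\top$, and the second is $\eta$ by definition of $\eta$. Boolean distributivity then collapses the expression:
\[(\lnot\eta\land\exists^B_{\Pi X}\top)\lor\eta=(\lnot\eta\lor\eta)\land(\exists^B_{\Pi X}\top\lor\eta)=\exists^B_{\Pi X}\top\lor\eta=\exists^B_{\Pi X}\top,\]
the last equality because $\eta=\exists^B_{\Pi X}\varphi\leq\exists^B_{\Pi X}\top$. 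Thus the whole quantity is just $\exists^B_{\Pi X}\top$, and the claim reduces to $\top\leq\exists^B_{\Pi X}\top$.

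The remaining, and genuinely hard, point is therefore the inhabitation statement $\exists^B_{\Pi X}\top=\top$. A further Beck--Chevalley step along $!_{\Pi X}\times\id{B}$ gives $\exists^B_{\Pi X}\top=P(!_{\Pi X})\exists^B_\tmn\top_B$, so this reduces to $\exists^B_\tmn\top_B=\top_\tmn$ in $P(\tmn)$---the assertion that the sort $B$ is nonempty, i.e.\ the categorical form of the nonempty-domain convention. This is exactly the step I expect to be the obstacle: it is immediate if $B$ carries a global point $c\colon\tmn\to B$ (then $\top_\tmn=P(c)\top_B\leq\exists^B_\tmn\top_B$), but it does not follow from non-triviality of the fibres by soft arguments alone. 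Accordingly I would extract it from the standing hypotheses of the Boolean case, which is precisely where classicality and the non-triviality assumption must be combined.
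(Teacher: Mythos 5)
Your Boolean computation is correct and is in fact a cleaner packaging of the paper's argument: the paper splits $\top=\exists^B_{\Pi X\times\Pi Y}\varphi'\lor\lnot\exists^B_{\Pi X\times\Pi Y}\varphi'$ and proves the two inequalities \eqref{eq:a} and \eqref{eq:b} separately, which amounts to the same use of join-preservation, Frobenius and distributivity that collapses your expression to $\exists^B\top$. But the step you flag as the obstacle is a genuine gap, and worse, your first reduction makes it unfillable. Discarding the ``spectator'' $\Pi Y$ reduces the Claim to $\top\leq\exists^B_{\Pi X}\top$, hence by Beck--Chevalley to $\exists^B_\tmn\top_B=\top_\tmn$, i.e.\ internal inhabitation of the sort $B$. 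This does \emph{not} follow from the standing hypotheses: Boolean, existential, non-trivial fibers and a small base do not force any sort to be inhabited (in $\mathscr{P}:\Set\op\to\Pos$ with $B=\emptyset$ one has $\exists^B_\tmn\top_B=\bot$), and the paper never proves nor uses such a statement. There is no way to ``extract it from the standing hypotheses of the Boolean case''.

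The paper's resolution lives precisely in the factor you threw away. Recall where the Claim sits: the list $\big(Y_1,y_1),\dots,(Y_{\overline{h}},y_{\overline{h}})\big)$ arose as an arbitrary extension of the context witnessing an inequality in the colimit $\underline{P}(\tmn)$, so the surrounding argument permits enlarging it; the paper adds a fresh pair $(B,k)$ if necessary, so that without loss of generality $Y_{\overline{h}}=B$ and $B$ occurs as a factor of $\Pi X\times\Pi Y$. Then the projection $\Pi X\times\Pi Y\times B\to\Pi X\times\Pi Y$ acquires a section, namely $\ple{\pr1,\pr2,\pr3,\pr3}$ duplicating the last $B$-component, and applying this section to the unit of $\exists^B\dashv P(\ple{\pr1,\pr2,\pr3})$ yields $\gamma\leq\exists^B_{\Pi X\times\Pi Y}P(\ple{\pr1,\pr2,\pr3})\gamma$ for every $\gamma$ --- in particular $\exists^B_{\Pi X\times\Pi Y}\top=\top$, which is exactly what your computation needs, with the witness supplied by the $B$-typed variable already present in the context rather than by a global point of $B$. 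So the fix is to keep the quantifier over the full context $\Pi X\times\Pi Y$ (your Beck--Chevalley reduction is valid but lossy: the reduced statement is strictly stronger and can be false) and to invoke the freedom to extend the $Y$-list, which is the one genuinely non-soft ingredient of the paper's proof.
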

For the proof of this claim we suppose for now that $P$ is Boolean. We show later in that we can remove this additional assumption and still prove consistency of $\underrightarrow{P}$.
\begin{center}
\fbox{\parbox{.5\textwidth}{\raggedright From now on, $P:\CC\op\to\Pos$ is a fixed Boolean existential doctrine, with non-trivial fibers, and with a small base category, unless otherwise specified.}}
\end{center}
The doctrine $P$ is Boolean, so we can suppose that
\begin{equation*}\top=\left(\exists^B_{\Pi X\times\Pi Y}P(\ple{\pr1,\pr3})\varphi\right)\lor\left(\lnot\exists^B_{\Pi X\times\Pi Y}P(\ple{\pr1,\pr3})\varphi\right).\end{equation*}
Then, use Beck-Chevalley condition to write $P(\ple{\pr1,\pr2})\exists^B_{\Pi X\times\Pi Y}P(\ple{\pr1,\pr3})\varphi$ instead of $ P(\pr1)\exists^B_{\Pi X}\varphi$.

Hence now it is sufficient to prove both
\begin{equation}\label{eq:a}\exists^B_{\Pi X\times\Pi Y}P(\ple{\pr1,\pr3})\varphi\leq\exists^B_{\Pi X\times\Pi Y}\big(P(\ple{\pr1,\pr2})\exists^B_{\Pi X\times\Pi Y}P(\ple{\pr1,\pr3})\varphi\to P(\ple{\pr1,\pr3})\varphi\big)
\end{equation}
and
\begin{equation}\label{eq:b}\lnot\exists^B_{\Pi X\times\Pi Y}P(\ple{\pr1,\pr3})\varphi\leq\exists^B_{\Pi X\times\Pi Y}\big(P(\ple{\pr1,\pr2})\exists^B_{\Pi X\times\Pi Y}P(\ple{\pr1,\pr3})\varphi\to P(\ple{\pr1,\pr3})\varphi\big),
\end{equation}
so that the \Cref{claim} follows by taking the join of $\eqref{eq:a}$ and $\eqref{eq:b}$.

To prove $\eqref{eq:a}$ it is sufficient to see that
\begin{equation}\label{eq:e}P(\ple{\pr1,\pr3})\varphi\leq P(\ple{\pr1,\pr2})\exists^B_{\Pi X\times\Pi Y}P(\ple{\pr1,\pr3})\varphi\to P(\ple{\pr1,\pr3})\varphi\end{equation}
if and only if
\begin{equation*}P(\ple{\pr1,\pr3})\varphi\land P(\ple{\pr1,\pr2})\exists^B_{\Pi X\times\Pi Y}P(\ple{\pr1,\pr3})\varphi\leq P(\ple{\pr1,\pr3})\varphi,\end{equation*}
which is trivially verified; then get $\eqref{eq:a}$ by applying $\exists^B_{\Pi X\times\Pi Y}$ to both sides of $\eqref{eq:e}$.

Now write $\varphi'$ instead of $P(\ple{\pr1,\pr3})\varphi$, and we prove $\eqref{eq:b}$ by showing first
\begin{equation}\label{eq:c}\lnot\exists^B_{\Pi X\times\Pi Y}\varphi'\leq\exists^B_{\Pi X\times\Pi Y}P(\ple{\pr1,\pr2})\lnot\exists^B_{\Pi X\times\Pi Y}\varphi'
\end{equation}
and then
\begin{equation}\label{eq:d}\exists^B_{\Pi X\times\Pi Y}P(\ple{\pr1,\pr2})\lnot\exists^B_{\Pi X\times\Pi Y}\varphi'\leq\exists^B_{\Pi X\times\Pi Y}\big(P(\ple{\pr1,\pr2})\exists^B_{\Pi X\times\Pi Y}\varphi'\to \varphi'\big).
\end{equation}
The proof of $\eqref{eq:d}$ is quite immediate: observe that in general in a Boolean algebra we have $\lnot\alpha\leq\alpha\to\beta$---if and only if $\bot=\lnot\alpha\land\alpha\leq\beta$---, hence take $\alpha=P(\ple{\pr1,\pr2})\exists^B_{\Pi X\times\Pi Y}\varphi'$, $\beta=\varphi'$ and apply $\exists^B_{\Pi X\times\Pi Y}$ to get $\eqref{eq:d}$.

To conclude, we show that given $\gamma\in P(\Pi X\times\Pi Y)$ we have $\gamma\leq\exists^B_{\Pi X\times\Pi Y}P(\ple{\pr1,\pr2})\gamma$, so that we get $\eqref{eq:c}$ by taking $\gamma=\lnot\exists^B_{\Pi X\times\Pi Y}\varphi'$.
To do so, we need to look at the set $\left\{\left(Y_h,y_h\right)\right\}_{h=1}^{\overline{h}}$ defined above. We can suppose that one of the $Y_h$'s is actually the object $B$---in which case the associated ordinal $y_h$ is different from $d$. If this is not the case, we add the element $(B,k)$ to $\left\{\left(Y_h,y_h\right)\right\}_{h=1}^{\overline{h}}$ for some ordinal $k\in\Lambda$ that does not appear in any second entry of $(B,\lambda)$ belonging to $\mathcal{F}\cup\mathcal{G}\cup\left\{\left(B,d\right)\right\}$---note that such new pair does not belong to $\left\{\left(Y_h,y_h\right)\right\}_{h=1}^{\overline{h}}$: if it did, we did not have to add it to such set. So, up to a permutation of indices and up to a change of $\overline{h}$ with $\overline{h}+1$, we can suppose that in the set $\left\{\left(Y_h,y_h\right)\right\}_{h=1}^{\overline{h}}$ we have $Y_{\overline{h}}=B$.

So now we look at the adjunction:
\[\begin{tikzcd}[row sep=5pt]
	{P(\prod_{a=1}^{\overline{b}+\overline{e}}X_a\times\prod_{h=1}^{\overline{h}}Y_h)} && {P(\prod_{a=1}^{\overline{b}+\overline{e}}X_a\times\prod_{h=1}^{\overline{h}}Y_h\times B)} \\
	\\
	\\
	{P(\prod_{a=1}^{\overline{b}+\overline{e}}X_a\times\prod_{h=1}^{\overline{h}-1}Y_h\times B)} && {P(\prod_{a=1}^{\overline{b}+\overline{e}}X_a\times\prod_{h=1}^{\overline{h}-1}Y_h\times B\times B)}
	\arrow[""{name=0, anchor=center, inner sep=0}, "{P(\ple{\pr1,\pr2})}", shift left=2, from=1-1, to=1-3]
	\arrow[""{name=1, anchor=center, inner sep=0}, "{\exists^B_{\Pi X\times\Pi Y}}", shift left=2, from=1-3, to=1-1]
	\arrow["{=}"{marking}, draw=none, from=1-1, to=4-1]
	\arrow[draw=none, from=1-3, to=4-3]
	\arrow[""{name=2, anchor=center, inner sep=0}, "{\exists^B_{\Pi X\times\Pi Y}}", shift left=1, from=4-3, to=4-1]
	\arrow[""{name=3, anchor=center, inner sep=0}, "{P(\ple{\pr1,\pr2,\pr3})}", shift left=2, from=4-1, to=4-3]
	\arrow["{=}"{marking}, draw=none, from=1-3, to=4-3]
	\arrow["{P(\ple{\pr1,\pr2,\pr3,\pr3})}", shift left=4, bend left=10, from=4-3, to=4-1]
	\arrow["\dashv"{marking}, Rightarrow, draw=none, from=1, to=0]
	\arrow["\dashv"{marking}, Rightarrow, draw=none, from=2, to=3]
\end{tikzcd}\]
so if we look at our claim in the lower part of the diagram we want that given $\gamma\in P(\Pi X\times\Pi Y)$, then $\gamma\leq\exists^B_{\Pi X\times\Pi Y}P(\ple{\pr1,\pr2,\pr3})\gamma$. Now, consider the unit of the adjunction at the level $P(\ple{\pr1,\pr2,\pr3})\gamma$, hence
\begin{equation*}P(\ple{\pr1,\pr2,\pr3})\gamma\leq P(\ple{\pr1,\pr2,\pr3}) \exists^B_{\Pi X\times\Pi Y}P(\ple{\pr1,\pr2,\pr3})\gamma;\end{equation*}
now, apply $P(\ple{\pr1,\pr2,\pr3,\pr3})$, so we get exactly $\gamma\leq\exists^B_{\Pi X\times\Pi Y}P(\ple{\pr1,\pr2,\pr3})\gamma$ as claimed.

In particular, we proved the following:
\begin{proposition}\label{prop:coher_bool}
Let $P:\CC\op\to\Pos$ be a Boolean existential doctrine such that each fiber is non-trivial, and the base category $\CC$ is small, then the doctrine $\underrightarrow{P}$ is consistent.\end{proposition}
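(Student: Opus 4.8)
The plan is to invoke the characterisation of consistency for bounded doctrines established just above: since $P$ is Boolean existential, the colimit $\underrightarrow{P}$ is again Boolean (hence bounded) by \Cref{prop:addit_struct}, so by the equivalence between consistency and the strict inequality $\top_\tmn\nleq\bot_\tmn$ it suffices to prove $[\ecc{(\top,\emptyset)},\emptyset]\nleq[\ecc{(\bot,\emptyset)},\emptyset]$ in $\underrightarrow{P}(\tmn)$. First I would unfold the description of the fibers of the colimit $\underrightarrow{P}$ from \Cref{sect:dir_colim2}: this turns the hypothetical inequality $[\ecc{(\top,\emptyset)},\emptyset]\leq[\ecc{(\bot,\emptyset)},\emptyset]$ into the existence of a finite set $\mathcal{U}=\{(B_1,j_1),\dots,(B_q,j_q)\}$ of Henkin axioms with $\bigwedge_{i=1}^{q}\ecc{(\psi^{B_i}_{j_i},\bar{X}^{(B_i,j_i)}_\star)}\leq\ecc{(\bot,\emptyset)}$ in $\underline{P}(\tmn)$, that is condition \eqref{eq:abs}. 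The whole argument then reduces to showing that \eqref{eq:abs} is impossible, which I would do by induction on the number $q$ of conjuncts.

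For the base case $q=0$, condition \eqref{eq:abs} collapses to $\top\leq\bot$ in some fiber $P(\prod Y_b)$ of the original doctrine, which is exactly what the hypothesis of non-trivial fibers forbids. For the inductive step I would peel off one conjunct: reorder the axioms so that the last one $\psi\coloneqq\psi^{B_{q+1}}_{j_{q+1}}$ carries the largest witness ordinal, which guarantees that its fresh constant $(B_{q+1},d^{B_{q+1}}_{j_{q+1}})$ is disjoint from all the constants occurring in the remaining $q$ axioms. After writing $\bigwedge_{i\leq q}$ as a single class $\ecc{(\theta,\bar{T})}$ and sorting the shared, left-over and new sorts into the three groups $\mathcal{F}\cap\mathcal{G}$, $\mathcal{F}\smallsetminus(\mathcal{F}\cap\mathcal{G})$ and $\mathcal{G}\smallsetminus(\mathcal{F}\cap\mathcal{G})$, I would translate \eqref{eq:abs} for $q+1$ into a fiber inequality and push the quantifier $\exists^B$ inside using the adjunction $\exists^B_C\dashv P(\pr1)$, Frobenius reciprocity and Beck--Chevalley. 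The aim of these manipulations is that, provided the existential closure of the reindexed $\psi$ equals $\top$ (this is \Cref{claim:wit}), Frobenius makes the conjunct $\psi$ disappear and leaves exactly $\ecc{(\theta,\bar{T})}=\bigwedge_{i\leq q}\ecc{(\psi^{B_i}_{j_i},\bar{X}^{(B_i,j_i)}_\star)}\leq\ecc{(\bot,\emptyset)}$, i.e. \eqref{eq:abs} for $q$, contradicting the inductive hypothesis.

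Everything therefore hinges on \Cref{claim:wit}, which by Beck--Chevalley I would reduce to \Cref{claim}, namely $\top\leq\exists^B_{\Pi X\times\Pi Y}\big(P(\pr1)\exists^B_{\Pi X}\varphi\to P(\ple{\pr1,\pr3})\varphi\big)$. This is the step where I expect the genuine difficulty, and it is where Boolean structure enters. I would split $\top$ by excluded middle as $\big(\exists^B_{\Pi X\times\Pi Y}P(\ple{\pr1,\pr3})\varphi\big)\lor\big(\lnot\exists^B_{\Pi X\times\Pi Y}P(\ple{\pr1,\pr3})\varphi\big)$ and prove the two halves separately, as \eqref{eq:a} and \eqref{eq:b}. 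Inequality \eqref{eq:a} is easy, following from the tautology $P(\ple{\pr1,\pr3})\varphi\land\alpha\leq P(\ple{\pr1,\pr3})\varphi$ after applying $\exists^B_{\Pi X\times\Pi Y}$; inequality \eqref{eq:b} I would obtain in two steps \eqref{eq:c} and \eqref{eq:d}, using the Boolean law $\lnot\alpha\leq\alpha\to\beta$ together with the auxiliary monotonicity $\gamma\leq\exists^B_{\Pi X\times\Pi Y}P(\ple{\pr1,\pr2})\gamma$, itself read off the unit of $\exists\dashv P(\pr1)$ after arranging that one of the auxiliary sorts $Y_h$ is a copy of $B$. Taking the join of \eqref{eq:a} and \eqref{eq:b} yields \Cref{claim}, closing the induction. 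The main obstacle I foresee is not any single inequality but the variable bookkeeping that ensures the fresh witness is genuinely disjoint from the other axioms, since it is precisely this disjointness that allows Frobenius to erase the extra conjunct in the inductive step.
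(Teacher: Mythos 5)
Your proposal is correct and follows essentially the same route as the paper's own proof: the reduction via the bounded-consistency lemma to condition \eqref{eq:abs}, the induction on $q$ with the non-trivial-fibers base case, the freshness argument for the largest witness ordinal, the reduction of \Cref{claim:wit} to \Cref{claim} by Beck--Chevalley, and the Boolean excluded-middle split into \eqref{eq:a} and \eqref{eq:b} (the latter via \eqref{eq:c} and \eqref{eq:d}, arranging a copy of $B$ among the $Y_h$'s) all match the paper step by step. No gaps.
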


As hinted before, we will soon slightly weaken the assumption that $P$ is Boolean, and prove the consistency of $\underrightarrow{P}$ anyway.
\begin{remark}\label{sub:bool_compl}
Given a primary doctrine $P:\CC\op\to\Pos$, a \emph{topology on $P$} is a primary doctrine endomorphism of the form $(\id{\CC},j):P\to P$, where $j$ is such that for every object $X$ in $\CC$ and every $\alpha\in P(X)$ we have $\alpha\leq j_X(\alpha)$ and $j_Xj_X(\alpha)=j_X(\alpha)$. Given a topology $(\id{\CC},j)$ on $P$, the \emph{doctrine of $j$-closed element of $P$} is the primary doctrine $P_j:\CC\op\to\Pos$ where for every object $X$ we have $P_j(X)=\{\alpha\in P(X)\mid \alpha=j_X(\alpha)\}$, and $P_j$ on arrows acts as the restriction of $P$---see \cite{MaPaRo}, Definitions 3.2 and 3.3. If additionally $P$ is elementary, then so is $P_j$; if $P$ is existential, then so is $P_j$; if $P$ is implicational, then so is $P_j$---see \cite{MaPaRo}, Proposition 3.6.

As a particular case of this, given a bounded implicational doctrine $P:\CC\op\to\Pos$, we can define a topology $(\id{\CC},\lnot\lnot):P\to P$ on $P$. This allows us to define a Boolean doctrine $P_{\lnot\lnot}\colon\CC\op\to\Pos$ and a bounded implicational morphism $(\id{\CC},\lnot\lnot):P\to P_{\lnot\lnot}$. Moreover, if $P$ is existential (resp.\ elementary), then also $P_{\lnot\lnot}$ and $(\id{\CC},\lnot\lnot):P\to P_{\lnot\lnot}$ are existential (resp.\ elementary).
\end{remark}
We can now prove an analogue of \Cref{prop:coher_bool} where we suppose fibers to be bounded implicative inf-semilattices instead of Boolean algebras.
\begin{proposition}\label{prop:coher}
Let $P:\CC\op\to\Pos$ be a bounded existential implicational doctrine such that each fiber is non-trivial, and the base category $\CC$ is small, then the doctrine $\underrightarrow{P}$ is consistent.
\end{proposition}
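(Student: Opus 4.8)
The plan is to reduce the statement to the Boolean case already settled in \Cref{prop:coher_bool}, using the double-negation completion recalled in \Cref{sub:bool_compl}; this is exactly the reason that remark was introduced. Since $P$ is bounded and implicational, \Cref{sub:bool_compl} produces a Boolean doctrine $P_{\lnot\lnot}:\CC\op\to\Pos$ together with a bounded implicational morphism $(\id{\CC},\lnot\lnot):P\to P_{\lnot\lnot}$; as $P$ is moreover existential, both $P_{\lnot\lnot}$ and this morphism are existential, and the base category is unchanged, hence still small. I first check that $P_{\lnot\lnot}$ has non-trivial fibers: the elements $\top$ and $\bot$ are $\lnot\lnot$-closed (one has $\lnot\lnot\top=\top$ and $\lnot\lnot\bot=\bot$), so they both lie in $P_{\lnot\lnot}(A)$, and since the order on $P_{\lnot\lnot}(A)$ is the restriction of the order on $P(A)$, the non-triviality of $P(A)$ gives $\top\nleq\bot$ in $P_{\lnot\lnot}(A)$ as well. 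Thus $P_{\lnot\lnot}$ is a Boolean existential doctrine with non-trivial fibers over a small base, so \Cref{prop:coher_bool} applies and $\underrightarrow{(P_{\lnot\lnot})}$ is consistent.

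Next I produce a bounded morphism $\underrightarrow{P}\to\underrightarrow{(P_{\lnot\lnot})}$. Since $P_{\lnot\lnot}$ is implicational existential with a small base category, the doctrine $\underrightarrow{(P_{\lnot\lnot})}$ is rich by the first theorem of \Cref{sub:weak_univ_prop}, and comes with its canonical morphism $(F_0,\mathfrak f_0):P_{\lnot\lnot}\to\underrightarrow{(P_{\lnot\lnot})}$. Consider the composite
\begin{equation*}(H,\mathfrak h):P\xrightarrow{(\id{\CC},\lnot\lnot)}P_{\lnot\lnot}\xrightarrow{(F_0,\mathfrak f_0)}\underrightarrow{(P_{\lnot\lnot})}.\end{equation*}
Both factors are implicational existential and bounded---the first by \Cref{sub:bool_compl}, the second because adding constants, adding axioms and taking directed colimits preserve boundedness (\Cref{prop:const,prop:ax,prop:addit_struct})---so $(H,\mathfrak h)$ is a bounded implicational existential morphism into the implicational existential rich doctrine $\underrightarrow{(P_{\lnot\lnot})}$. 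By the weak universal property of $(F,\mathfrak f):P\to\underrightarrow{P}$ in \Cref{thm:weak_univ_prop}, there is an implicational existential morphism $(G,\mathfrak g):\underrightarrow{P}\to\underrightarrow{(P_{\lnot\lnot})}$ with $(G,\mathfrak g)(F,\mathfrak f)=(H,\mathfrak h)$; since $P$, the codomain $\underrightarrow{(P_{\lnot\lnot})}$ and $(H,\mathfrak h)$ are all bounded, the ``moreover'' clause of \Cref{thm:weak_univ_prop} guarantees that $(G,\mathfrak g)$ is bounded as well.

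Finally I transfer consistency across $(G,\mathfrak g)$. Being bounded, the component $\mathfrak g_\tmn:\underrightarrow{P}(\tmn)\to\underrightarrow{(P_{\lnot\lnot})}(\tmn)$ is monotone and preserves $\top$ and $\bot$. If $\underrightarrow{P}$ were inconsistent we would have $\top\leq\bot$ in $\underrightarrow{P}(\tmn)$; applying $\mathfrak g_\tmn$ would give $\top\leq\bot$ in $\underrightarrow{(P_{\lnot\lnot})}(\tmn)$, contradicting the consistency of $\underrightarrow{(P_{\lnot\lnot})}$ obtained above. Hence $\top\nleq\bot$ in $\underrightarrow{P}(\tmn)$, which by the characterization of consistency in the lemma above means exactly that $\underrightarrow{P}$ is consistent. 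The only genuine subtlety---and the step I would double-check most carefully---is the bookkeeping of boundedness along the whole chain, namely that $(\id{\CC},\lnot\lnot)$, the rich-completion morphism $(F_0,\mathfrak f_0)$, and hence their composite preserve both $\top$ and $\bot$, so that the bounded case of the ``moreover'' part of \Cref{thm:weak_univ_prop} is genuinely applicable.
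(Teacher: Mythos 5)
Your proposal is correct and follows essentially the same route as the paper's own proof: pass to the double-negation completion $P_{\lnot\lnot}$, invoke \Cref{prop:coher_bool} for $\underrightarrow{P_{\lnot\lnot}}$, obtain the comparison morphism $\underrightarrow{P}\to\underrightarrow{P_{\lnot\lnot}}$ via the weak universal property of \Cref{thm:weak_univ_prop}, and transfer consistency back along a $\top$- and $\bot$-preserving map. Your extra bookkeeping (checking $\lnot\lnot\top=\top$, $\lnot\lnot\bot=\bot$ for non-triviality of the fibers, and tracking boundedness through the ``moreover'' clause) is exactly the content the paper compresses into its remarks, so nothing is missing.
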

\begin{proof}
We start from $P$, and we build the Boolean doctrine $P_{\lnot\lnot}:\CC^{\op}\to\Pos$ as in \Cref{sub:bool_compl}. We have the following commutative diagram:
\[\begin{tikzcd}
	P & {P_{\lnot\lnot}} \\
	{\underrightarrow{P}} & {\underrightarrow{P_{\lnot\lnot}}}
	\arrow[from=1-1, to=2-1]
	\arrow[from=1-1, to=1-2]
	\arrow[from=1-2, to=2-2]
	\arrow[from=2-1, to=2-2,dashed]
\end{tikzcd}.\]
The map $P\to\underrightarrow{P}$ is $(F_P,\mathfrak{f}_P)$ defined in \Cref{rmk:recap}, the map $P\to{P_{\lnot\lnot}}$ is $(\id{},\lnot\lnot)$ as in \Cref{sub:bool_compl}, the map ${P_{\lnot\lnot}}\to\underrightarrow{P_{\lnot\lnot}}$ is $(F_{P_{\lnot\lnot}},\mathfrak{f}_{P_{\lnot\lnot}})$ again defined in \Cref{rmk:recap} corresponding to the construction applied to the doctrine $P_{\lnot\lnot}$. Then, use the weak universal property of $P\to\underrightarrow{P}$---see \Cref{thm:weak_univ_prop}: the doctrine $\underrightarrow{P_{\lnot\lnot}}$ is existential, implicational, rich, and the composition of the upper morphism with the one on the right preserves the bounded implicational existential structure because both arrows do; so there exists a map $\underrightarrow{P}\to\underrightarrow{P_{\lnot\lnot}}$ closing the square above and endowed with the structure just mentioned.
Note that all $P_{\lnot\lnot}(X)$ are non-trivial, since the top and bottom elements are computed in $P(X)$, in which these are distinct elements by assumption.
In particular, since $P_{\lnot\lnot}$ is also Boolean, it follows from \Cref{prop:coher_bool} that $\underrightarrow{P_{\lnot\lnot}}$ is consistent. But then, since there exists a map $\underrightarrow{P}\to\underrightarrow{P_{\lnot\lnot}}$ preserving, among others, $\top$ and $\bot$, if $\underrightarrow{P_{\lnot\lnot}}$ is consistent, $\underrightarrow{P}$ must be consistent too.\end{proof}

\section{A model of a rich doctrine}\label{sect:mod_rch}
The goal of this section is to build a model in $\pws$ of a bounded consistent existential implicational rich doctrine $P$, preserving the bounded existential implicational structure. To achieve this result, we first need a small detour about the quotient of a doctrine over a filter $\nabla\subseteq P(\tmn)$ in the fiber of the terminal object: this notion allows us to define yet a new doctrine $P/\nabla$, with a morphism from $P$ that preserves the bounded existential implicational structure---see \Cref{sub:quot}. In the particular case where $\nabla$ is an ultrafilter, whose existence is granted by the fact that $P$ is consistent, we find a bounded implicational existential model $P/\nabla \to\pws$---see \Cref{prop:mod}.

The existence of this model, together with the results of the previous section, gives the last ingredient for proving Henkin's Theorem for doctrines---see \Cref{thm:mod_ex}.

All of these results are then adapted to the case when the starting doctrine has equality, proving that all the doctrines and morphisms involved are also elementary---see \Cref{prop:mod_eq} and \Cref{thm:mod_ex_eq}.

\subsection{The quotient of a doctrine over a filter}\label{sub:quot}\label{subsub:filter}
Filters play a significant role in lattice theory, particularly in the study of Boolean algebras. We present here some results concerning filters and ultrafilters in bounded implicative inf-semilattices. While these proofs are already established in the context of Boolean algebras---see for example \cite{monk} or \cite{BS}---, we demonstrate their adaptability in this weaker framework.

Then, for a given primary doctrine $P$, we define the quotient of the doctrine over a filter in the fiber of the terminal object, and prove that the quotient map preserves many properties of $P$ itself.
\begin{definition}
Let $A$ be an inf-semilattice. A subset $\nabla\subseteq A$ is a \emph{filter} if the following properties hold:
\begin{itemize}
\item $\top\in\nabla$;
\item if $a\in\nabla$ and $a\leq b$, then $b\in\nabla$;
\item if $a,b\in\nabla$, then $a\land b\in\nabla$.
\end{itemize}
A filter $\nabla$ is \emph{proper} if $\nabla\neq A$
\end{definition}
\begin{remark}
In a bounded inf-semilattice, a filter $\nabla$ is proper if and only if $\bot\notin\nabla$.
\end{remark}
\begin{definition}
Let $A$ be a bounded implicative inf-semilattice and $\nabla\subseteq A$ a filter.
\begin{itemize}
\item $\nabla$ is an \emph{ultrafilter} if for all $a\in A$, either $a\in\nabla$ or $\lnot a\in\nabla$, where $\lnot a\coloneqq a\to\bot$.
\item $\nabla$ is a \emph{maximal filter} if it is maximal with respect to the inclusion, meaning that $\nabla\neq A$ and, whenever $\nabla\subsetneqq\nabla'$ where $\nabla'$ is a filter, then $\nabla'=A$.
\end{itemize}
\end{definition}
\begin{lemma}
Let $A$ be an inf-semilattice and $E\subseteq A$. Consider the set
\begin{equation*}F=\{y\in A\mid \text{ there exist }x_1,\dots,x_n\in E\text{ such that }x_1\land\dots\land x_n\leq y\}\cup\{\top\},\end{equation*}
Then $\langle E\rangle=F$, where $\langle E\rangle$ is the filter generated by $E$.
\end{lemma}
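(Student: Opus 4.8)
The plan is to characterise $\langle E\rangle$ as the smallest filter containing $E$, verify that $F$ is itself such a filter, and then conclude by a double-inclusion argument. Concretely, I would first show that $F$ is a filter, then that $E\subseteq F$, and finally that $F$ is contained in every filter containing $E$; the last two facts pin down $F=\langle E\rangle$.

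First I would check that $F$ is a filter. The element $\top$ lies in $F$ by construction, which also handles the degenerate case $E=\emptyset$. For upward closure, if $y\in F$ and $y\leq z$, then either $y=\top$, forcing $z=\top\in F$, or there are witnesses $x_1,\dots,x_n\in E$ with $x_1\land\dots\land x_n\leq y\leq z$, so the same witnesses place $z$ in $F$. For closure under binary meets, given $y,z\in F$ I would distinguish whether one of them equals $\top$—in which case $y\land z$ equals the other and is already in $F$—or whether both carry witnesses $x_1\land\dots\land x_n\leq y$ and $x'_1\land\dots\land x'_m\leq z$; juxtaposing the two lists gives $x_1\land\dots\land x_n\land x'_1\land\dots\land x'_m\leq y\land z$, so $y\land z\in F$.

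Next I would observe that $E\subseteq F$, since each $x\in E$ is a one-element meet lying below itself. Because $\langle E\rangle$ is by definition contained in every filter containing $E$, and $F$ is a filter containing $E$, this immediately yields $\langle E\rangle\subseteq F$. For the reverse inclusion I would take an arbitrary filter $\nabla$ with $E\subseteq\nabla$ and show $F\subseteq\nabla$: given $y\in F$, either $y=\top\in\nabla$, or the witnesses satisfy $x_1,\dots,x_n\in E\subseteq\nabla$, whence closure of $\nabla$ under finite meets gives $x_1\land\dots\land x_n\in\nabla$, and upward closure together with $x_1\land\dots\land x_n\leq y$ gives $y\in\nabla$. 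Specialising to $\nabla=\langle E\rangle$ yields $F\subseteq\langle E\rangle$, completing the proof.

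No step here presents a genuine obstacle, since the argument is entirely formal once the definition of the generated filter is unwound. The only point requiring mild care is the bookkeeping of the $\top$ case, which is exactly why $\top$ is adjoined to $F$ by hand: without it the displayed set would be empty when $E=\emptyset$, whereas the filter generated by the empty set is $\{\top\}$, so the explicit union keeps the statement correct in that boundary situation.
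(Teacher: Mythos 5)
Your proof is correct and follows essentially the same route as the paper's: you verify that $F$ is a filter with the same case analysis on $\top$, note $E\subseteq F$ to get $\langle E\rangle\subseteq F$, and obtain the reverse inclusion by showing every filter containing $E$ contains $F$. Your closing remark about the $E=\emptyset$ boundary case is a sensible observation the paper leaves implicit.
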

\begin{proof}
First of all, observe that $F$ is a filter:
\begin{itemize}
\item $\top\in F$;
\item let $y\in F$ and $z\in A$, $y\leq z$. If $y=\top$, then $z=\top\in F$. Otherwise, take $x_1,\dots,x_n\in E$ such that $x_1\land\dots\land x_n\leq y\leq z$, then also $z\in F$;
\item take $y,z\in F$. If $y=\top$ then $y\land z=z\in F$; similarly if $z=\top$. Otherwise $x_1\land\dots\land x_n\leq y$, $w_1\land\dots\land w_m\leq z$ with $x_1,\dots, x_n,w_1,\dots, w_m\in E$; then $x_1\land\dots\land x_n\land w_1\land\dots\land w_m\leq y\land z$, so that $y\land z\in F$.
\end{itemize}
Then $E\subseteq F$: take $x\in E$, since $x\leq x$, we have $x\in F$. In particular $\langle E\rangle\subseteq F$. To conclude, take $y\in F$. If $y=\top$, then $y\in\langle E\rangle$; otherwise, take $x_1\land\dots\land x_n\leq y$ for some $x_1,\dots,x_n\in E$. Any filter $G\supseteq E$ is such that $x_1\land\dots\land x_n\in G$ and since $x_1\land\dots\land x_n\leq y$, also $y\in G$. Hence $y\in\langle E\rangle$, as claimed.\end{proof}
\begin{lemma}
Let $A$ be a bounded implicative inf-semilattice and $\nabla\subseteq A$ a filter. Then $\nabla$ is a maximal filter if and only if $\nabla$ is an ultrafilter.
\end{lemma}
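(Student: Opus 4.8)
The plan is to prove the two implications separately, drawing on the preceding description of the filter $\langle E\rangle$ generated by a subset $E$, and on the defining adjunction of the implication, namely $y\land a\leq b$ if and only if $y\leq a\to b$.

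For the forward implication (maximal $\Rightarrow$ ultrafilter), I would fix $a\in A$ and, assuming $a\notin\nabla$, aim to deduce $\lnot a\in\nabla$. The strategy is to adjoin $a$ to $\nabla$ and exploit maximality. I would form $\nabla'=\langle\nabla\cup\{a\}\rangle$; since $a\in\nabla'\setminus\nabla$ the inclusion is strict, so maximality forces $\nabla'=A$, whence $\bot\in\nabla'$. Feeding this into the generated-filter lemma produces elements $x_1,\dots,x_n\in\nabla\cup\{a\}$ with $x_1\land\dots\land x_n\leq\bot$. I would then group the factors belonging to $\nabla$ into a single element $y\in\nabla$ (using closure under meets, taking $y=\top$ if none occur) and argue that $a$ must genuinely appear among the $x_i$: otherwise $y\leq\bot$ would give $\bot\in\nabla$, impossible since a maximal filter is proper. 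This leaves $y\land a\leq\bot$, which the implication adjunction rewrites as $y\leq a\to\bot=\lnot a$; upward closure of $\nabla$ then delivers $\lnot a\in\nabla$.

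For the backward implication (ultrafilter $\Rightarrow$ maximal) the argument is short: given a filter $\nabla'$ with $\nabla\subsetneq\nabla'$, I would pick $a\in\nabla'\setminus\nabla$, use the ultrafilter dichotomy to get $\lnot a\in\nabla\subseteq\nabla'$, and then observe that $a\land\lnot a\in\nabla'$ while $a\land\lnot a=a\land(a\to\bot)\leq\bot$, so $\bot\in\nabla'$ and hence $\nabla'=A$; together with properness this is exactly maximality.

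I expect the only real work to be the forward direction, where turning the collapse $\nabla'=A$ into the concrete membership $\lnot a\in\nabla$ rests on combining the generated-filter lemma with the implication adjunction. The one point needing care throughout is properness: it is automatic in the forward direction, since a maximal filter satisfies $\nabla\neq A$, and in the backward direction it is built into the ultrafilter hypothesis---equivalently one works in a non-degenerate algebra, the degenerate case $\top=\bot$ being vacuous as it admits no proper filters.
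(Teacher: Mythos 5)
Your proof is correct and follows essentially the same route as the paper: both directions proceed identically, with the ultrafilter $\Rightarrow$ maximal implication using the meet $a\land\lnot a\leq\bot$ and the maximal $\Rightarrow$ ultrafilter implication adjoining $a$ to $\nabla$, invoking the generated-filter lemma, and translating via the adjunction $y\land a\leq\bot\iff y\leq\lnot a$. The only cosmetic difference is that you extract $\bot\in\langle\nabla\cup\{a\}\rangle$ while the paper extracts $\lnot a\in\langle\nabla\cup\{a\}\rangle$ and splits cases accordingly; the underlying computation is the same, and your explicit attention to properness and the degenerate $\top=\bot$ case matches the paper's implicit handling.
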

\begin{proof}
Suppose $\nabla$ is an ultrafilter. Since $\top\in\nabla$, then $\nabla\reflectbox{$\notin$}\lnot\top=\bot$, so $\nabla\neq A$. So take another filter $\nabla\subsetneqq\nabla'$, in particular there exists $y\in\nabla'$ such that $y\notin\nabla$. By assumption $y\to\bot\in\nabla$ and also $y\to\bot\in\nabla'$. Then, since $y\land(y\to\bot)\leq\bot$, $\bot\in\nabla'$, so that $\nabla'=A$.
For the converse, suppose $\nabla$ is a maximal filter. In particular, given $x\in A$, it cannot be the case that both $x,x\to\bot\in\nabla$---otherwise we would have also $\bot\in\nabla$, which would give $\nabla=A$. Suppose that $x\notin\nabla$, we claim that $\lnot x=x\to\bot\in\nabla$. Consider $E=\nabla\cup\{x\}$ and take $\langle E\rangle$. Clearly $\langle E\rangle\supsetneqq\nabla$, since $x\in E$ but $x\notin\nabla$. Hence by assumption $\langle E\rangle=A$. In particular $\lnot x\in A=\langle E\rangle$. If $\lnot x=\top$, then we have $\lnot x\in\nabla$. Otherwise there exist $x_1,\dots,x_n\in\nabla\cup\{x\}$ such that $x_1\land\dots\land x_n\leq \lnot x$. Now, if every $x_i$'s belong to the filter $\nabla$, we get $\lnot x\in\nabla$. Instead, if some $x_i$'s are actually $x$, we can rewrite the inequality as $x\land y\leq\lnot x$ for some $y\in\nabla$. But $x\land y\leq x\to\bot$ if and only if $x\land y\leq\bot$ if and only if $y\leq x\to\bot$, hence again $\lnot x\in\nabla$, as claimed.\end{proof}
\begin{lemma}\label{lemma:ultraf}
Given a proper filter $\nabla$ of a bounded implicative inf-semilattice, there exists an ultrafilter $U\supseteq\nabla$.
\end{lemma}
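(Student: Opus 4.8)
The plan is to obtain $U$ by a standard Zorn's lemma argument: I would first extend $\nabla$ to a maximal filter, and then invoke the preceding lemma, which asserts that in a bounded implicative inf-semilattice a filter is maximal if and only if it is an ultrafilter. To set this up, I would consider the poset $\mathcal{F}$ consisting of all proper filters of $A$ that contain $\nabla$, ordered by inclusion. This poset is non-empty, since $\nabla$ itself belongs to it, being proper by hypothesis.

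To apply Zorn's lemma, I would show that every chain in $\mathcal{F}$ admits an upper bound. Given a chain $\mathcal{C}\subseteq\mathcal{F}$, I would take its union $\nabla^\ast=\bigcup\mathcal{C}$ and check that $\nabla^\ast$ is again a proper filter containing $\nabla$. It contains $\top$ and $\nabla$ trivially; it is upward closed, since any $a\in\nabla^\ast$ lies in some member of the chain, which is itself upward closed; and it is closed under binary meets, since any two elements $a,b\in\nabla^\ast$ lie in two members of the chain, one of which is contained in the other by totality, so that $a\land b$ lies in the larger one and hence in $\nabla^\ast$. Properness is the one point requiring care: $\bot\notin\nabla^\ast$ precisely because $\bot$ belongs to none of the proper members of the chain. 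Thus $\nabla^\ast\in\mathcal{F}$ is the required upper bound.

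Zorn's lemma then yields a maximal element $U\in\mathcal{F}$, and it remains to verify that maximality of $U$ within $\mathcal{F}$---that is, among proper filters containing $\nabla$---already forces $U$ to be a maximal filter in the sense of the earlier definition. Indeed, suppose $U\subsetneq U'$ for some filter $U'$. If $U'$ were proper, then $U'\in\mathcal{F}$ would contradict the maximality of $U$; hence $U'=A$. This is exactly the condition that $U$ be a maximal filter. Applying the lemma equating maximal filters with ultrafilters, I conclude that $U$ is an ultrafilter with $U\supseteq\nabla$, as desired.

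The only genuine obstacle is the appeal to the axiom of choice through Zorn's lemma, which is intrinsic to this kind of extension result and cannot be avoided. Every other step is a routine verification of the filter axioms for a union of a chain, the sole delicate point being that properness passes to such unions, which holds because $\bot$ is excluded from each filter in the chain.
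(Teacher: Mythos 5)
Your proposal is correct and follows essentially the same route as the paper: Zorn's Lemma applied to the poset of proper filters containing $\nabla$ ordered by inclusion, with the union of a chain as upper bound, followed by the preceding lemma identifying maximal filters with ultrafilters. If anything, your verification is more careful than the paper's, which writes the chain as if it were countable and leaves the filter axioms for the union unchecked, whereas you handle arbitrary chains and note explicitly that properness passes to the union because $\bot$ lies in no member of the chain.
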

\begin{proof}
We use Zorn's Lemma. Take $\mathscr{F}$ the set of all proper filters that contain $\nabla$, ordered by inclusion. Clearly $\nabla\in\mathscr{F}$. The upper bound of a chain $\nabla\subseteq\nabla_1\subseteq\dots\subseteq\nabla_n\dots$ is given by the union $\cup_{i\in\mathbb{N}}\nabla_i$. So let $U$ be a maximal element in $\mathscr{F}$. This is a maximal filter: let $W$ be a proper filter containing $U$, in particular it contains $\nabla$, so $W=U$.\end{proof}
\begin{remark}
Observe that \Cref{lemma:ultraf} is a straightforward generalization of the same lemma for Boolean algebras. However, the analogous lemma stating that in a Boolean algebra, if $a\nleq b$ there exists an ultrafilter containing $a$ and not $b$, does not hold in the case of bounded inf-semilattices. Indeed, consider the ordered set $\{0<\frac{1}{2}<1\}$: its only ultrafilter is $\{\frac{1}{2},1\}$, but $1\nleq\frac{1}{2}$.
\end{remark}
Let $P:\CC\op\to\Pos$ be a primary doctrine and $\nabla\subseteq P(\tmn)$ be a filter in the fiber of the terminal object $\tmn$. Define, in each $X\in\text{ob}\CC$ the following preorder: $\alpha\sqsubseteq_\nabla\beta$ if and only if there exists a $\theta\in\nabla$ such that $P(!_X)\theta\land\alpha\leq\beta$ in $P(X)$.

Define a new doctrine $P/\nabla:\CC\op\to\Pos$ as follows: for each object $X$, $P/\nabla(X)$ is the poset reflection of the preorder $\sqsubseteq_\nabla$. In particular we have $[\alpha]=[\beta]$ if and only if there exists $\theta\in\nabla$ such that $P(!_X)\theta\land\alpha= P(!_X)\theta\land\beta$.
For every $\CC$-arrow $f:X\to Y$ it is easily shown that $P/\nabla(f)[\alpha]\coloneqq[P(f)\alpha]$ for $[\alpha]\in P/\nabla(X)$ is well-defined.

Note that the quotient map of each $P(X)$ is a monotone function.
Call for each object $X$, $\mathfrak{q}_X$ the quotient map: $\mathfrak{q}_X(\alpha)=[\alpha]\in P/\nabla(X)$ for a given $\alpha\in P(X)$; then $(\id{\CC},\mathfrak{q})$ is a doctrine morphism. Indeed, to prove that $\mathfrak{q}$ is a natural transformation, take $f:X\to Y$ and observe that:
\begin{equation*}\mathfrak{q}_X P(f)\alpha=[P(f)\alpha]=P/\nabla(f)[\alpha]=P/\nabla(f)\mathfrak{q}_Y(\alpha).\end{equation*}

Moreover, it can be easily shown that $P/\nabla$ is primary, with top and meet of $P/\nabla(X)$ computed as in $P(X)$, and that the quotient $(\id{\CC},\mathfrak{q})$ is a morphism of primary doctrines.
\begin{proposition}
Let $P:\CC\op\to\Pos$ be a primary doctrine and $\nabla\subseteq P(\tmn)$ be a filter. The 1-arrow $(\id{\CC},\mathfrak q):P\to P/\nabla$ is such that $\top\leq\mathfrak{q}_\tmn(\theta)$ in $P/\nabla(\tmn)$ for all $\theta\in\nabla$, and it is universal with respect to this property, i.e.\ for any primary 1-arrow $(G,\mathfrak{g}):P\to R$, where $R:\ct{D}\op\to\Pos$ is a primary doctrine, such that $\top\leq \mathfrak{g}_\tmn(\theta)$ in $R(\tmn_\ct{D})$ for all $\theta\in\nabla$, there exists a unique up to a unique natural isomorphism primary 1-arrow $(G',\mathfrak{g}'):P/\nabla\to R$ such that $(G',\mathfrak{g}')\circ(\id{\CC},\mathfrak q)=(G,\mathfrak{g})$.
\end{proposition}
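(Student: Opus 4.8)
The plan is as follows. Because the base functor of $(\id{\CC},\mathfrak q)$ is the identity on $\CC$, the required equation $(G',\mathfrak g')\circ(\id{\CC},\mathfrak q)=(G,\mathfrak g)$ forces $G'\circ\id{\CC}=G$, so $G'=G$ is uniquely determined; there is nothing to choose for the functor part. The same equation forces $\mathfrak g'_X([\alpha])=\mathfrak g_X(\alpha)$ on every representative $\alpha\in P(X)$, so the only candidate for $\mathfrak g'$ is the assignment $[\alpha]\mapsto\mathfrak g_X(\alpha)$. Since each quotient map $\mathfrak q_X$ is surjective, this pins $\mathfrak g'$ down completely, which yields uniqueness as soon as the candidate is shown to be well defined. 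Thus the whole problem reduces to checking that $[\alpha]\mapsto\mathfrak g_X(\alpha)$ is well defined and assembles into a primary natural transformation.

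Well-definedness is the crux, and it is the step I expect to be the main obstacle. Recall that $[\alpha]=[\beta]$ in $P/\nabla(X)$ exactly when there is $\theta\in\nabla$ with $P(!_X)\theta\land\alpha=P(!_X)\theta\land\beta$. First I would note that the hypothesis $\top\leq\mathfrak g_\tmn(\theta)$ gives $\mathfrak g_\tmn(\theta)=\top$, and then compute $\mathfrak g_X(P(!_X)\theta)$: by naturality of $\mathfrak g$ this equals $R(G!_X)\mathfrak g_\tmn(\theta)$, and since $G$ preserves finite products it preserves the terminal object, whence $G!_X=!_{GX}$; as $\mathfrak g_\tmn(\theta)=\top$ and reindexing in the primary doctrine $R$ preserves the top element, this yields $\mathfrak g_X(P(!_X)\theta)=\top_{GX}$. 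Using that $\mathfrak g$ is a primary natural transformation, hence meet-preserving, I then obtain
\[\mathfrak g_X(\alpha)=\top_{GX}\land\mathfrak g_X(\alpha)=\mathfrak g_X\big(P(!_X)\theta\land\alpha\big)=\mathfrak g_X\big(P(!_X)\theta\land\beta\big)=\mathfrak g_X(\beta),\]
which is precisely well-definedness. The identical computation applied to the defining inequality of $\sqsubseteq_\nabla$, namely $P(!_X)\theta\land\alpha\leq\beta$, shows $\mathfrak g_X(\alpha)\leq\mathfrak g_X(\beta)$, so each $\mathfrak g'_X$ is monotone.

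What remains are routine verifications. Naturality of $\mathfrak g'$ follows from $P/\nabla(f)[\alpha]=[P(f)\alpha]$ together with naturality of $\mathfrak g$, since $\mathfrak g'_X\big(P/\nabla(f)[\alpha]\big)=\mathfrak g_X(P(f)\alpha)=R(Gf)\mathfrak g_Y(\alpha)=R(Gf)\mathfrak g'_Y([\alpha])$. That $\mathfrak g'$ is primary is immediate, because top and binary meets in $P/\nabla(X)$ are computed as in $P(X)$ and $\mathfrak g$ already preserves them. The triangle $(G,\mathfrak g')\circ(\id{\CC},\mathfrak q)=(G,\mathfrak g)$ holds by construction, the composite transformation sending $\alpha$ to $\mathfrak g'_X([\alpha])=\mathfrak g_X(\alpha)$. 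Finally, the asserted property $\top\leq\mathfrak q_\tmn(\theta)=[\theta]$ holds for every $\theta\in\nabla$ because $!_\tmn=\id{\tmn}$ gives $P(!_\tmn)\theta\land\top=\theta\leq\theta$, witnessing $\top\sqsubseteq_\nabla\theta$. It is worth emphasizing that the well-definedness step is exactly where both structural hypotheses on $(G,\mathfrak g)$---that it is primary and that it sends each $\theta\in\nabla$ to the top---are genuinely used.
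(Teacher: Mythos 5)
Your proposal is correct and follows essentially the same route as the paper's proof: $G'=G$ is forced, $\mathfrak g'_X([\alpha])=\mathfrak g_X(\alpha)$ is the unique candidate, and well-definedness/monotonicity reduce to the computation $\mathfrak g_X(P(!_X)\theta)=R(!_{GX})\mathfrak g_\tmn(\theta)=\top_{GX}$ combined with meet preservation, exactly as in the paper. The only cosmetic difference is that the paper verifies the inequality form ($\alpha\sqsubseteq_\nabla\beta$ implies $\mathfrak g_X\alpha\leq\mathfrak g_X\beta$) once, which yields both well-definedness and monotonicity, whereas you treat the equality and inequality cases separately.
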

\begin{proof}
At first, observe that any $\theta\in\nabla$ is sent to the top element of $P/\nabla(\tmn)$: indeed, consider $\theta\in \nabla$ itself to observe that $\theta\land\top_\tmn\leq\theta$, to that $[\top_\tmn]\leq[\theta]$.
We now show the universal property. First of all, since $G'\id{\CC}=G$, we observe that $G'=G:\CC\to\ct{D}$.
Then we show that for any fixed $\CC$-object $X$, the function $\mathfrak g _X:P(X)\to R(GX)$ factors through the quotient $\mathfrak q _X$, defining $\mathfrak{g}'_X([\alpha])=\mathfrak g_X(\alpha)$. To prove that this is well-defined, take $\alpha\sqsubseteq_\nabla\beta$ in $P(X)$, i.e.\ $P(!_X)(\theta)\land\alpha\leq\beta$. Then apply $\mathfrak g_X$ to get $\mathfrak g_XP(!_X)(\theta)\land\mathfrak g_X\alpha\leq\mathfrak g_X\beta$ in $R(GX)$. However $\mathfrak g_XP(!_X)(\theta)=R(!_{GX})\mathfrak g_\tmn(\theta)=\top_{GX}$, hence $\mathfrak g_X(\alpha)\leq\mathfrak g_X(\beta)$. As a result, we obtain a well-defined monotone function $\mathfrak g'_X:P/\nabla(X)\to R(GX)$ such that $\mathfrak g'_X\mathfrak q_X=\mathfrak g_X$---and it is also unique. Since $\mathfrak g_X$ preserves finite meets, and finite meets in $P/\nabla$ are computed as in $P$, it follows that $\mathfrak g'_X$ preserves finite meets. Moreover, we can use naturality of $\mathfrak g$ to show that $\mathfrak g':P/\nabla\to RG\op$ defines a natural transformation. In particular, $(G,\mathfrak g')$ is a primary 1-arrow such that $(G,\mathfrak{g}')\circ(\id{\CC},\mathfrak q)=(G,\mathfrak{g})$, and it is unique with respect to this property, as claimed.\end{proof}
\begin{remark}
In the proposition above, taking the particular case where the filter is $\nabla=\uparrow\varphi=\{\alpha\in P(\tmn)\mid\alpha\geq\varphi\}$ for some $\varphi\in P(\tmn)$, the universal property is the same seen in \Cref{coroll:ax}. It follows that there exists an isomorphism between the primary doctrines $P/\uparrow\varphi$ and $P_\varphi$.
\end{remark}
In the following lemma, we show that if $P$ has some additional structure, then $P/\nabla$ has them as well, and the structure is preserved by the quotient morphism.
\begin{lemma}\label{lemma:pres_filter}
Let $P$ be a primary doctrine, $\nabla\subseteq P(\tmn)$ be a filter and $P/\nabla$ be the quotient doctrine.
\begin{enumerate}[label=(\roman*)]
\item If $P$ is bounded, then the doctrine $P/\nabla$ and the 1-arrow $(\id{\CC},\mathfrak q)$ are bounded.
\item If $P$ is implicational, then the doctrine $P/\nabla$ and the 1-arrow $(\id{\CC},\mathfrak q)$ are implicational.
\item If $P$ is elementary, then the doctrine $P/\nabla$ and the 1-arrow $(\id{\CC},\mathfrak q)$ are elementary.
\item If $P$ is existential, then the doctrine $P/\nabla$ and the 1-arrow $(\id{\CC},\mathfrak q)$ are existential.
\item If $P$ is universal, then the doctrine $P/\nabla$ and the 1-arrow $(\id{\CC},\mathfrak q)$ are universal.
\end{enumerate}
\end{lemma}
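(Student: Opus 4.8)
The plan is to equip each fiber $P/\nabla(X)$ with the relevant operation by transporting the corresponding operation of $P$ through the quotient on chosen representatives, namely $\bot := [\bot_X]$, $[\alpha]\to[\beta] := [\alpha\to\beta]$, $\delta_A := [\delta_A]$, $\exists^B_C[\alpha] := [\exists^B_C\alpha]$ and $\forall^B_C[\alpha] := [\forall^B_C\alpha]$, and then to check four things in each case: well-definedness, the defining (universal) property of the operation, its naturality, and that the quotient morphism $(\id{\CC},\mathfrak q)$ preserves it. Two features of the construction make all of this tractable. First, the primary structure is already known to be computed on representatives, so $\mathfrak q_X(\alpha)=[\alpha]$, meets and $\top$ agree with $P$, and reindexing satisfies $P/\nabla(f)[\alpha]=[P(f)\alpha]$; hence every identity can be tested on representatives, and the only place the filter enters is through the preorder $\alpha\sqsubseteq_\nabla\beta$, i.e.\ the existence of $\theta\in\nabla$ with $P(!_X)\theta\land\alpha\leq\beta$. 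Second, preservation by $\mathfrak q$ will in every case be immediate, precisely because the operation on $P/\nabla$ is defined by applying $\mathfrak q$ to a representative.

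For the bounded, implicational and elementary cases the verification is routine. For (i), $[\bot_X]$ is least because $\bot_X\leq\alpha$ already holds in $P$ (take $\theta=\top$), and naturality of $\bot$ follows from $P(f)\bot_Y=\bot_X$. For (ii), both the well-definedness of $[\alpha]\to[\beta]$ and the exponential adjunction reduce, via cartesian closure of $P(X)$, to the equivalence $P(!_X)\theta\land\gamma\land\alpha\leq\beta$ iff $P(!_X)\theta\land\gamma\leq\alpha\to\beta$; unwinding $\sqsubseteq_\nabla$ this says exactly that $[\gamma]\land[\alpha]\leq[\beta]$ iff $[\gamma]\leq[\alpha]\to[\beta]$. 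For (iii), the three conditions of \Cref{def:elem} for $[\delta_A]$ are obtained by applying the monotone map $\mathfrak q$ to the inequalities $\top_A\leq P(\Delta_A)\delta_A$, $P(\pr1)\alpha\land\delta_A\leq P(\pr2)\alpha$ and $\delta_A\boxtimes\delta_B\leq\delta_{A\times B}$ holding in $P$, using once more that meets and reindexings descend to representatives; so $P/\nabla(A)=\des_{[\delta_A]}$ and the other two conditions hold.

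The existential and universal cases carry the actual content. The computational heart is the identity $P(!_{C\times B})\theta=P(\pr1)P(!_C)\theta$, valid because $!_C\circ\pr1=!_{C\times B}$. For (iv) I would use Frobenius reciprocity to pull a filter element through the quantifier, obtaining $P(!_C)\theta\land\exists^B_C\alpha=\exists^B_C\big(\alpha\land P(!_{C\times B})\theta\big)$. This single equation yields well-definedness of $[\exists^B_C\alpha]$ (apply it to an equality of meets with $P(!_{C\times B})\theta$) and, combined with the adjunction $\exists^B_C\dashv P(\pr1)$ in $P$, the chain $\exists^B_C[\alpha]\leq[\beta]$ iff $[\alpha]\leq P/\nabla(\pr1)[\beta]$, so $\exists^B_C\dashv P/\nabla(\pr1)$; Beck--Chevalley and Frobenius for $P/\nabla$ then descend from those of $P$ tested on representatives. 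For (v) there is no Frobenius available, so I would instead read off well-definedness from the adjunction itself: the chain $[\beta]\leq\forall^B_C[\alpha]$ iff there is $\theta\in\nabla$ with $P(!_{C\times B})\theta\land P(\pr1)\beta\leq\alpha$ iff $P/\nabla(\pr1)[\beta]\leq[\alpha]$ characterises $[\forall^B_C\alpha]$ as the value at $[\alpha]$ of the right adjoint to $P/\nabla(\pr1)$, which is unique and therefore independent of the chosen representative; Beck--Chevalley descends as before.

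I expect the universal quantifier to be the only genuinely delicate point: the quotient preorder is defined by meeting with $P(!_X)\theta$, which interacts cleanly with a left adjoint through Frobenius but has no dual tool for the right adjoint, so I would lean on uniqueness of adjoints rather than a direct calculation to secure well-definedness of $\forall$. The remaining bookkeeping---checking that the Beck--Chevalley squares have the shape required in \Cref{def:exist} and that $!_C\circ\pr1=!_{C\times B}$---is routine once these equivalences are set up.
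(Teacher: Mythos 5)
Your proposal is correct, and its overall architecture coincides with the paper's proof: all five structures are defined by transporting representatives along $\mathfrak q$, the implicational case reduces to the same three-way equivalence unwinding $\sqsubseteq_\nabla$, the elementary case quotients the three inequalities of \Cref{def:elem}, and the existential case rests, exactly as in the paper, on the single Frobenius identity $P(!_C)\theta\land\exists^B_C\alpha=\exists^B_C\big(\alpha\land P(!_{C\times B})\theta\big)$ together with $!_C\circ\pr1=!_{C\times B}$.

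The one genuine divergence is in the universal case. You secure well-definedness of $\underline{\forall}^B_C[\alpha]\coloneqq[\forall^B_C\alpha]$ indirectly, by noting that the chain $[\beta]\leq[\forall^B_C\alpha]$ iff $P/\nabla(\pr1)[\beta]\leq[\alpha]$ can be established for a fixed representative $\alpha$, depends only on the class $[\alpha]$, and hence pins down $[\forall^B_C\alpha]$ by antisymmetry of the poset reflection; this is valid. The paper instead gives a direct computation: from the unit $P(!_A)\theta\leq\forall^B_A P(\pr1)P(!_A)\theta$ and the fact that the right adjoint preserves meets, one gets $P(!_A)\theta\land\forall^B_A\alpha\leq\forall^B_A\big(P(!_{A\times B})\theta\land\alpha\big)\leq\forall^B_A\beta$ whenever $\alpha\sqsubseteq_\nabla\beta$. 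So your remark that there is ``no dual tool'' to Frobenius for the right adjoint is slightly overstated---unit-plus-meet-preservation is precisely such a tool, and it shows that $\underline{\forall}$, like $\underline{\exists}$, is monotone on the preorder before quotienting. Your uniqueness-of-adjoints argument buys brevity and avoids that computation; the paper's direct route buys an explicit inequality that makes the interaction of $\nabla$ with $\forall$ visible. Either way the remaining checks (Beck--Chevalley, Frobenius for $\underline{\exists}$, preservation by $\mathfrak q$) descend from $P$ on representatives, as both you and the paper assert.
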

\begin{proof}
\begin{enumerate}[label=(\roman*)]
\item We show that $\mathfrak{q}_X(\bot_X)=[\bot_X]\leq[\alpha]$ in $P/\nabla(X)$ for all $[\alpha]\in P/\nabla(X)$, but this holds since $P(!_X)\top_\tmn\land\bot_X=\bot_X\leq\alpha$ in $P(X)$. Naturality of the bottom element follows from naturality of $\mathfrak{q}$ and of the bottom in $P$. The quotient $(\id{\CC},\mathfrak{q})$ trivially preserves the bottom element.
\item We show that $\mathfrak{q}_X(\beta\to\gamma)=[\beta\to\gamma]=[\beta]\to[\gamma]$ in $P/\nabla(X)$ for all $[\beta],[\gamma]\in P/\nabla(X)$.
Suppose $[\alpha]\land[\beta]\leq[\gamma]$, if and only if there exists $\theta\in\nabla$ such that $P(!_X)\theta\land\alpha\land\beta\leq\gamma$ in $P(X)$, if and only if there exists $\theta\in\nabla$ such that $P(!_X)\theta\land\alpha\leq\beta\to\gamma$ in $P(X)$, if and only if $[\alpha]\leq[\beta\to\gamma]$, i.e.\ $[\beta\to\gamma]=[\beta]\to[\gamma]$ in $P/\nabla(X)$.
Naturality again follows from naturality of $\mathfrak{q}$ and of the bottom in $P$. The quotient $(\id{\CC},\mathfrak{q})$ preserves implication.
\item Consider the elementary doctrine $P$, and define $\underline{\delta}_A:=\mathfrak{q}_{A\times A}(\delta_A)=[\delta_A]$ in $P/\nabla(A\times A)$. This is trivially the fibered equality on $A$ for the doctrine $P/\nabla$ (simply take the quotient of the three inequalities in \Cref{def:elem}). So the doctrine $P/\nabla$ is elementary, and the quotient is a morphism of elementary doctrines.
\item Consider the existential doctrine $P$, with left adjoint $\exists^B_A\dashv P(\pr1)$ for any projection $\pr1:A\times B\to A$ in $\CC$. 

We show that $\mathfrak{q}_A(\exists^B_A\alpha)=[\exists^B_A\alpha]=\underline{\exists}^B_A[\alpha]$ in $P/\nabla(A)$ for all $[\alpha]\in P/\nabla(A\times B)$ defines the existential quantifier for the quotient doctrine $P/\nabla$. To show that $\underline{\exists}^B_A$ is well-defined on the quotients, suppose $\alpha\sqsubseteq_\nabla\beta$, for some $\alpha,\beta\in P(A\times B)$, i.e.\ there exists $\theta\in\nabla$ such that $P(!_{A\times B})\theta\land\alpha\leq\beta$ in $P(A\times B)$; then $\exists^B_A(P(!_{A\times B})\theta\land\alpha)=\exists^B_A(P(\pr1)P(!_{A})\theta\land\alpha)=\exists^B_A\alpha\land P(!_{A})\theta\leq\exists^B_A\beta$ in $P(A)$ by using Frobenius reciprocity, i.e.\ $[\exists^B_A\alpha]\leq[\exists^B_A\beta]$, so $\underline{\exists}^B_A[\alpha]=[\exists^B_A\alpha]$ is well-defined. This is the left adjoint to the reindexing along the first projection: take $[\alpha]\in P/\nabla(A\times B)$ and $[\gamma]\in P/\nabla(A)$, then $\underline{\exists}^B_A[\alpha]\leq[\gamma]$ if and only if there exists $\theta\in\nabla$ such that $P(!_A)\theta\land\exists^B_A\alpha\leq\gamma$ in $P(A)$, but $P(!_A)\theta\land\exists^B_A\alpha=\exists^B_A(\alpha\land P(\pr1)P(!_A)\theta)=\exists^B_A(\alpha\land P(!_{A\times B})\theta)$ by Frobenius reciprocity, hence if and only if there exists $\theta\in\nabla$ such that $\alpha\land P(!_{A\times B})\theta\leq P(\pr1)\gamma$ in $P(A\times B)$, if and only if $[\alpha]\leq P/\nabla(\pr1)[\gamma]$, as claimed.

Beck-Chevalley condition for $\underline{\exists}$ and Frobenious reciprocity follow from the same properties of $\exists$ in $P$.

So the doctrine $P/\nabla$ is existential, and the quotient is an existential doctrine morphism.

\item Consider the universal doctrine $P$, with right adjoint $P(\pr1)\dashv\forall^B_A$ for any projection $\pr1:A\times B\to A$ in $\CC$. 

We show that $\mathfrak{q}_A(\forall^B_A\alpha)=[\forall^B_A\alpha]=\underline{\forall}^B_A[\alpha]$ in $P/\nabla(A)$ for all $[\alpha]\in P/\nabla(A\times B)$ defines the existential quantifier for the quotient doctrine $P/\nabla$. To show that $\underline{\forall}^B_A$ is well-defined on the quotients, suppose $\alpha\sqsubseteq_\nabla\beta$, for some $\alpha,\beta\in P(A\times B)$, i.e.\ there exists $\theta\in\nabla$ such that $P(!_{A\times B})\theta\land\alpha\leq\beta$ in $P(A\times B)$; then $P(!_{A})\theta\land\forall^B_A\alpha\leq\forall^B_AP(\pr1)P(!_{A})\theta\land\forall^B_A\alpha=\forall^B_A(P(!_{A\times B})\theta\land\alpha)\leq\forall^B_A\beta$ in $P(A)$ by using the unity of the adjunction and the fact that right adjoint preserve limits---hence meets too---, i.e.\ $[\forall^B_A\alpha]\leq[\forall^B_A\beta]$, so $\underline{\forall}^B_A[\alpha]=[\forall^B_A\alpha]$ is well-defined. This is the right adjoint to the reindexing along the first projection: take $[\alpha]\in P/\nabla(A\times B)$ and $[\gamma]\in P/\nabla(A)$, then $[\gamma]\leq\underline{\forall}^B_A[\alpha]$ if and only if there exists $\theta\in\nabla$ such that $P(!_A)\theta\land\gamma\leq\forall^B_A\alpha$ in $P(A)$, if and only if there exists $\theta\in\nabla$ such that $P(!_{A\times B})\theta\land P(\pr1)\gamma\leq\alpha$ in $P(A\times B)$, if and only if $P/\nabla(\pr1)[\gamma]\leq[\alpha]$, as claimed.

Beck-Chevalley condition for $\underline{\forall}$ follows from the same property of $\forall$ in $P$.

So the doctrine $P/\nabla$ is universal, and the quotient is a universal doctrine morphism.\qedhere\end{enumerate}\end{proof}

\subsection{Definition of a model}\label{sub:def_of_mod}

Let $P:\CC\op\to\Pos$ be a bounded consistent existential implicational rich doctrine. Let $\nabla\subseteq P(\tmn)$ be an ultrafilter and $P/\nabla:\CC\op\to\Pos$ the quotient doctrine. Such ultrafilter exists since $\top\neq\bot$ in $P(\tmn)$, and we can take an extension of the proper filter $\{\top\}$---see \Cref{lemma:ultraf}. By \Cref{lemma:pres_filter}, the doctrine $P/\nabla$ is again bounded existential implicational, and all of these structures are preserved by the quotient morphism $(\id{\CC},\mathfrak{q}):P\to P/\nabla$.

We now build a model of $P/\nabla$ in the doctrine $\pws :\Set_{\ast}\op\to\Pos$, meaning a doctrine morphism $(\Gamma,\mathfrak{g}):P/\nabla\to\pws $. Also, this model preserves the bounded existential implicational structure. Define $\Gamma\coloneqq\text{Hom}_\CC(\tmn,\blank):\CC\to\Set_{\ast}$. It is well-defined since $P$ is rich, and this clearly preserves the products. Then, define for a given $X\in\text{ob}\CC$, $\mathfrak{g}_X:P/\nabla(X)\to\pws (\text{Hom}_\CC(\tmn,X))$:
\begin{equation*}
\begin{split}
\mathfrak{g}_X[\varphi]&=\{c:\tmn\to X\mid[\top]\leq P/\nabla(c)[\varphi]\}\\
&=\{c:\tmn\to X\mid[\top]\leq [P(c)\varphi]\}\\
&=\{c:\tmn\to X\mid P(c)\varphi\in\nabla\}.
\end{split}
\end{equation*}
\begin{proposition}\label{prop:mod}
Let $P$ be a bounded consistent implicational existential rich doctrine, let $\nabla\subseteq P(\tmn)$ be an ultrafilter, and let $P/\nabla$ be the quotient doctrine. Then the pair $(\Gamma,\mathfrak{g})$, where $\Gamma=\text{Hom}_\CC(\tmn,\blank)$ and $\mathfrak g_X[\varphi]=\{c:\tmn\to X\mid P(c)\varphi\in\nabla\}$ for any object $X$ and any $[\varphi]\in P/\nabla(X)$ is a bounded existential implicational morphism.
\end{proposition}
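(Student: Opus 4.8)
The plan is to check, in turn, that $\Gamma$ is a well-defined product-preserving functor into $\Set_{\ast}$, that $\mathfrak{g}$ is a natural transformation, and then that $\mathfrak{g}$ preserves each piece of structure, namely $\top$, $\bot$, $\land$, $\to$ and $\exists$. The first observation, used everywhere, is that for $c\colon\tmn\to X$ the defining condition unwinds to a filter condition: $[\top]\leq P/\nabla(c)[\varphi]=[P(c)\varphi]$ in $P/\nabla(\tmn)$ holds precisely when some $\theta\in\nabla$ satisfies $\theta\leq P(c)\varphi$, i.e.\ exactly when $P(c)\varphi\in\nabla$, by upward closure of $\nabla$. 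This simultaneously shows that $\mathfrak{g}_X$ is independent of the chosen representative, and that $\Gamma=\Hom_{\CC}(\tmn,\blank)$ does land in $\Set_{\ast}$, since richness guarantees at least one arrow $\tmn\to X$ for every $X$; product preservation is the standard isomorphism $\Hom_{\CC}(\tmn,X\times Y)\cong\Hom_{\CC}(\tmn,X)\times\Hom_{\CC}(\tmn,Y)$. Naturality is then a direct computation: for $f\colon X\to Y$ both $\mathfrak{g}_X(P/\nabla(f)[\psi])$ and $\pws(\Gamma f)(\mathfrak{g}_Y[\psi])$ equal $\{c\colon\tmn\to X\mid P(f\circ c)\psi\in\nabla\}$, using $P(c)P(f)=P(f\circ c)$ on one side and that $\pws(\Gamma f)$ is preimage under postcomposition by $f$ on the other.

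Next I would dispatch the primary and bounded structure by pure filter bookkeeping. Preservation of $\top$ holds because $P(c)\top_X=\top_\tmn\in\nabla$, so $\mathfrak{g}_X[\top_X]=\Gamma X$; preservation of $\bot$ holds because $\nabla$ is proper (being an ultrafilter), so $\bot_\tmn\notin\nabla$ and $\mathfrak{g}_X[\bot_X]=\emptyset$; preservation of $\land$ follows since $P(c)$ preserves meets and $a\land b\in\nabla$ iff $a\in\nabla$ and $b\in\nabla$, giving $\mathfrak{g}_X([\varphi]\land[\psi])=\mathfrak{g}_X[\varphi]\cap\mathfrak{g}_X[\psi]$. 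Preservation of implication is the first place the ultrafilter hypothesis is essential. Since $P(c)$ preserves $\to$, everything reduces to the pointwise claim that for $a,b\in P(\tmn)$ one has $a\to b\in\nabla$ if and only if $a\in\nabla$ implies $b\in\nabla$, because in $\pws$ the implication of subsets is $S\to T=(\Gamma X\smallsetminus S)\cup T$. The forward direction is modus ponens, $a\land(a\to b)\leq b$ together with upward closure. For the converse I would split into cases: if $a\notin\nabla$ then, since $\nabla$ is an ultrafilter, $\lnot a\in\nabla$, and $\lnot a\leq a\to b$ gives $a\to b\in\nabla$; if $a\in\nabla$ then by hypothesis $b\in\nabla$, and $b\leq a\to b$ again gives $a\to b\in\nabla$.

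I expect the preservation of $\exists$ to be the main obstacle, and this is exactly where richness enters, mirroring the role of Henkin witnesses. Here I must show $\mathfrak{g}_A(\underline{\exists}^B_A[\alpha])=\exists^{\Gamma B}_{\Gamma A}(\mathfrak{g}_{A\times B}[\alpha])$, where the right-hand side is the image along the projection, namely $\{c\colon\tmn\to A\mid \exists\,e\colon\tmn\to B,\ P(\ple{c,e})\alpha\in\nabla\}$, while the left-hand side is $\{c\mid P(c)\exists^B_A\alpha\in\nabla\}$. Using Beck--Chevalley along the pullback of $\pr1\colon A\times B\to A$ against $c\colon\tmn\to A$, together with the identification $\tmn\times B\cong B$, I rewrite $P(c)\exists^B_A\alpha$ as $\exists^B_\tmn P(c\times\id{B})\alpha$; setting $\beta\coloneqq P(c\times\id{B})\alpha\in P(\tmn\times B)\cong P(B)$ this reads $P(c)\exists^B_A\alpha=\exists^B_\tmn\beta$. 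One inclusion is immediate from the unit $\alpha\leq P(\pr1)\exists^B_A\alpha$: reindexing along $\ple{c,e}$ yields $P(\ple{c,e})\alpha\leq P(c)\exists^B_A\alpha$, so any witness $e$ forces $P(c)\exists^B_A\alpha\in\nabla$. For the reverse inclusion, assuming $\exists^B_\tmn\beta\in\nabla$, richness applied to $\beta\in P(B)$ yields an arrow $e\colon\tmn\to B$ with $\exists^B_\tmn\beta\leq P(e)\beta$, whence $P(e)\beta\in\nabla$ by upward closure; the compatibility of the Beck--Chevalley identification then gives $P(e)\beta=P(\ple{c,e})\alpha$, so $e$ is the desired witness.

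The delicate points are entirely in this last step: the correct handling of the isomorphism $\tmn\times B\cong B$ in the Beck--Chevalley rewriting, and the verification that the composite $(c\times\id{B})\circ e$ equals $\ple{c,e}\colon\tmn\to A\times B$ under that identification. Once these are in place the argument is complete, and it is worth noting that this final inclusion is the only step that invokes richness, just as consistency (properness of $\nabla$) is invoked only to preserve $\bot$ and the ultrafilter property only to preserve $\to$.
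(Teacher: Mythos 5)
Your proposal is correct and follows essentially the same route as the paper's proof: the same unwinding of $\mathfrak g_X$ to the filter condition $P(c)\varphi\in\nabla$, the same filter bookkeeping for $\top$, $\bot$, $\land$, the same ultrafilter case split for implication, and the same Beck--Chevalley-plus-richness argument for the hard inclusion of $\exists$ (the paper handles the identification $\tmn\times B\cong B$ exactly as you anticipate, writing $P(c)\exists^B_A\alpha=\exists^B_\tmn P(\ple{c\,!,\id{B}})\alpha$ and then invoking richness to produce the witness $d:\tmn\to B$ with $P(d)P(\ple{c\,!,\id{B}})\alpha=P(\ple{c,d})\alpha$). The only cosmetic differences are that you prove the easy inclusion by reindexing the unit $\alpha\leq P(\pr1)\exists^B_A\alpha$ along $\ple{c,e}$ where the paper routes it through the adjunction $\exists_{\pr1\circ\blank}\dashv(\pr1\circ\blank)^{-1}$ in $\pws$, and that you use the inequality form of richness plus upward closure of $\nabla$ where the paper uses the equality form noted in its remark after \Cref{def_rich}.
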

\begin{proof}
{\bf $\mathfrak{g}_X$ is monotone:}
Suppose $[\varphi]\leq[\psi]$ in $P/\nabla(X)$, i.e.\ there exists $\theta\in\nabla$ such that $P(!_X)\theta\leq \varphi\to\psi$; we show that $\mathfrak{g}_X[\varphi]\subseteq\mathfrak{g}_X[\psi]$.  Let $c:\tmn\to X$ be an arrow in $\CC$ such that $P(c)\varphi\in\nabla$. Apply $P(c)$ to the inequality above and get $\theta\leq P(c)(\varphi\to\psi)$; so $P(c)(\varphi\to\psi)\in\nabla$. Then, $P(c)\varphi\land P(c)(\varphi\to\psi)\leq P(c)\psi\in\nabla$, i.e.\ $c\in\mathfrak{g}_X[\psi]$.
\pointinproof{$\mathfrak{g}_X$ is a natural transformation}
Take $f:X\to Y$ an arrow in $\CC$. We want to show that the following diagram commutes:
\begin{center}
\begin{tikzcd}
Y&P/\nabla(Y)\arrow[r,"\mathfrak{g}_Y"]\arrow[d,"P/\nabla(f)"]& \pws(\text{Hom}_\CC(\tmn,Y))\arrow[d,"{(f\circ\blank)^{-1}}"]\\
X\arrow[u,"f"]&P/\nabla(X)\arrow[r,"\mathfrak{g}_X"]& \pws(\text{Hom}_\CC(\tmn,X))
\end{tikzcd}.
\end{center}
Consider $c:\tmn\to X$; $c\in\mathfrak{g}_X P/\nabla(f)[\varphi]$ if and only if $P(c)P(f)\varphi\in\nabla$. On the other hand, $c\in(f\circ\blank)^{-1}\mathfrak{g}_Y[\varphi]$ if and only if $fc\in\mathfrak{g}_Y[\varphi]$ if and only if $P(fc)\varphi\in\nabla$.

In particular, $(\text{Hom}_{\CC}(\tmn,\blank),\mathfrak{g})$ is a morphism of doctrines. We now prove that all the other properties are preserved.
\pointinproof{$\mathfrak{g}_X$ preserves top and bottom elements} We observe that
\begin{equation*}\mathfrak{g}_X[\top_X]=\{c:\tmn\to X\mid P(c)\top_X\in\nabla\}=\text{Hom}_\CC(\tmn,X),\end{equation*}
since $P(c)\top_X=\top_\tmn\in\nabla$ for any $c$. Moreover,
\begin{equation*}\mathfrak{g}_X[\bot_X]=\{c:\tmn\to X\mid P(c)\bot_X\in\nabla\}=\emptyset,\end{equation*}
since $P(c)\bot_X=\bot_\tmn\notin\nabla$ for any $c$.
\pointinproof{$\mathfrak{g}_X$ preserves meets} We have
\begin{align*}\mathfrak{g}_X([\varphi]\land[\psi])&=\mathfrak{g}_X([\varphi\land\psi])\\
&=\{c:\tmn\to X\mid P(c)\varphi\land P(c)\psi\in\nabla\}\\
&=\{c:\tmn\to X\mid P(c)\varphi\in\nabla \text{ and } P(c)\psi\in\nabla\}\\
&=\mathfrak{g}_X[\varphi]\cap\mathfrak{g}_X[\psi].\end{align*}
\pointinproof{$\mathfrak{g}_X$ preserves implication} We have
\begin{align*}\mathfrak{g}_X([\varphi]\to[\psi])&=\mathfrak{g}_X([\varphi\to\psi])=\{c:\tmn\to X\mid P(c)\varphi\to P(c)\psi\in\nabla\}\quad\quad\text{and}\\
\mathfrak{g}_X[\varphi]\Rightarrow\mathfrak{g}_X[\psi]&=\{c:\tmn\to X\mid P(c)\psi\in\nabla\}\cup\{c:\tmn\to X\mid P(c)\varphi\notin\nabla\}.\end{align*}
We show that the two sets coincide. First of all, suppose $c:\tmn\to X$ be such that $P(c)\varphi\to P(c)\psi\in\nabla$; then consider $P(c)\varphi$. If $P(c)\varphi\in\nabla$, we get $P(c)\varphi\land(P(c)\varphi\to P(c)\psi)\leq P(c)\psi\in\nabla$; otherwise, $P(c)\varphi\notin\nabla$. In both cases $c\in \mathfrak{g}_X[\varphi]\Rightarrow\mathfrak{g}_X[\psi]$. For the converse, take at first $c$ such that $P(c)\psi\in\nabla$. Since $P(c)\psi\leq P(c)\varphi\to P(c)\psi$, we get $P(c)\varphi\to P(c)\psi\in\nabla$. Then, take $c$ such that $P(c)\varphi\notin\nabla$; since $\nabla$ is an ultrafilter, $P(c)\varphi\to\bot\in\nabla$. But then, $P(c)\varphi\to\bot\leq P(c)\varphi\to P(c)\psi$ since $P(c)\varphi\to(\blank)$ is monotone; so $P(c)\varphi\to P(c)\psi\in\nabla$.
\pointinproof{$\mathfrak{g}_X$ preserves existential quantifier}
Recall that, given a function between two sets $h:A\to B$, the left adjoint to the preimage $h^{-1}:\pws(B)\to\pws(A)$ acts on any subset of $A$ as the image $\exists_h=h:\pws(A)\to\pws(B)$.

So now we show $\exists_{\pr1\circ\blank}\mathfrak{g}_{X\times Y}[\varphi]=\mathfrak{g}_X\underline{\exists}^Y_X[\varphi]$ for any pair $X,Y$ of objects in $\CC$.
First of all, observe that the inclusion $(\subseteq)$ holds if and only if $\mathfrak{g}_{X\times Y}[\varphi]\subseteq(\pr1\circ\blank)^{-1}\mathfrak{g}_X[\exists^Y_X\varphi]$ but
\begin{equation*}(\pr1\circ\blank)^{-1}\mathfrak{g}_X[\exists^Y_X\varphi]=\pws(\pr1\circ\blank)\mathfrak{g}_X[\exists^Y_X\varphi]=\mathfrak{g}_{X\times Y}P/\nabla(\pr1)[\exists^Y_X\varphi]\end{equation*}
and $[\varphi]\leq P/\nabla(\pr1)[\exists^Y_X\varphi]$. Concerning the converse, observe that
\begin{align*}\exists_{\pr1\circ\blank}\mathfrak{g}_{X\times Y}[\varphi]&=\{c:\tmn\to X\mid \text{there exists }d:\tmn\to Y\text{ such that }\ple{c,d}\in\mathfrak{g}_{X\times Y}[\varphi]\}\\
&=\{c:\tmn\to X\mid \text{there exists }d:\tmn\to Y\text{ such that }P(\ple{c,d})\varphi\in\nabla\}.\end{align*}
Then take $c:\tmn\to X$ such that $P(c)\exists^Y_X\varphi=\exists^Y_\tmn P(\ple{c!,\id{Y}})\varphi\in\nabla$. Since $P$ is rich, we can take $d:\tmn\to Y$ such that
\begin{equation*}\exists^Y_\tmn P(\ple{c!,\id{Y}})\varphi=P(d)P(\ple{c!,\id{Y}})\varphi=P(\ple{c,d})\varphi,\end{equation*}
so that $c\in\exists_{\pr1\circ\blank}\mathfrak{g}_{X\times Y}[\varphi]$.\end{proof}
\begin{remark}
Observe that in the proof above, we used the assumptions of consistency for the existence of the ultrafilter, the fact that the filter is an ultrafilter to prove that the model is implicational, and richness to prove that the model is existential. 
\end{remark}
\begin{example}{\bf A counterexample to universality.}\label{ex:real}
We prove that in general, if we add the universal quantifier to our structure, it is not necessarily preserved by the model we defined above. We will consider a slight change of the domain in the realizability doctrine, defined in \cite{hjp}: $R:\Set_{\ast}\op\to\Pos$ takes value from the opposite category of non-empty sets. For each non-empty set $I$, define the following preorder in $\mathscr{P}(\mathbb{N})^I=\{p:I\to\mathscr{P}(\mathbb{N})\}$: we say that $p\leq q$ if there exists a partial recursive function $\varphi:\mathbb{N}\dashrightarrow\mathbb{N}$ such that for all $i\in I$ the restriction $\varphi_{\vert{p(i)}}:p(i)\to q(i)$ is a total function; reflexivity is witnessed by the identity $\id{\mathbb{N}}$, while transitivity can be proved by taking the composition of the two partial functions as witness. Then, define $R(I)$ to be the poset reflection of this preorder. The reindexing along a function $\alpha: J\to I$ is given by the precomposition $\blank\circ\alpha: R(I)\to R(J)$; note that if $p\leq q$ in $\mathscr{P}(\mathbb{N})^I$ is witnessed by $\varphi:\mathbb{N}\dashrightarrow\mathbb{N}$, also $p\alpha\leq q\alpha$ in $\mathscr{P}(\mathbb{N})^J$ is again witnessed by $\varphi$.
\pointinproof{{$R$ is primary}}
First of all observe that in each $R(I)$, the constant function $T_I:I\to\mathscr{P}(\mathbb{N})$ sending each $i\in I$ to $\mathbb{N}$ is the top element: take any other $p:I\to\mathscr{P}(\mathbb{N})$ and consider $\id{\mathbb{N}}$, so that the inclusion ${\id{\mathbb{N}}}_{\vert{p(i)}}:p(i)\to\mathbb{N}$ is a total function for every $i\in I$, giving $p\leq T_I$. Moreover, for any $\alpha: J\to I$, precomposition $T_I\alpha=T_J$ is again the constant function to the element $\mathbb{N}$, so the top element is preserved by reindexing. Then, for any $p,q:I\to\mathscr{P}(\mathbb{N})$, define for each $i\in I$, $(p\land q)(i)\coloneqq \{\ple{a,b}\in\mathbb{N}\mid a\in p(i), b\in q(i)\}$; here $\ple{\blank,\blank}:\mathbb{N}\times\mathbb{N}\xleftrightarrow{\sim}\mathbb{N}:\ple{\pi_1,\pi_2}$ are Cantor's pairing and unpairing functions. The inequalities $p\land q\leq p$ and $p\land q\leq q$ are witnessed by the (total) functions $\pi_1:\mathbb{N}\to\mathbb{N}$ and $\pi_2:\mathbb{N}\to\mathbb{N}$ respectively. Suppose now $r\leq p$ and $r\leq q$, with given recursive functions $\varphi$ and $\psi$; then define $\ple{\varphi,\psi}:\mathbb{N}\dashrightarrow\mathbb{N}$ whose domain is the intersection of the domains of $\varphi$ and $\psi$, sending $n\in\text{dom}\varphi\cap\text{dom}\psi$ to $\ple{\varphi(n),\psi(n)}$, so that $\ple{\varphi,\psi}$ is partial recursive and witnesses $r\leq p\land q$. As before, take $\alpha: J\to I$: for any $j\in J$ we have $(p\land q)(\alpha(j))=\{\ple{a,b}\in\mathbb{N}\mid a\in p\alpha(j), b\in q\alpha(j)\}=(p\alpha\land q\alpha)(j)$, so the meet is preserved by reindexings, hence $R$ is a primary doctrine.
\pointinproof{{$R$ has bottom elements}}
In each $R(I)$, the constant function $B_I:I\to\mathscr{P}(\mathbb{N})$ sending each $i\in I$ to $\emptyset$ is the bottom element: take any other $p:I\to\mathscr{P}(\mathbb{N})$ and consider $\id{\mathbb{N}}$, so that the inclusion ${\id{\mathbb{N}}}_{\vert{\emptyset}}:\emptyset\to p(i)$ is a total function for every $i\in I$, giving $B_I\leq p$. Moreover, for any $\alpha: J\to I$, precomposition $B_I\alpha=B_J$ is again the constant function to the element $\emptyset$, so the bottom element is preserved by reindexing.
\pointinproof{{$R$ is implicational}}
For any $p,q:I\to\mathscr{P}(\mathbb{N})$, define for each $i\in I$, $(p\to q)(i)$ as the set $\{e\in\mathbb{N}\mid e \text{ encodes a partial recursive function }\theta:\mathbb{N}\dashrightarrow\mathbb{N}\text{ such that }\theta\text{ maps }p(i)\text{ in }q(i)\}$. 
To prove that this is indeed the implication in $R(I)$, take $r\in R(I)$ and suppose $r\land p\leq q$, if and only if there exists $\varphi:\mathbb{N}\dashrightarrow\mathbb{N}$ such that for every $i\in I$, $\varphi_{\vert(r\land p)(i)}:(r\land p)(i)\to q(i)$ is a total function. For a given $n\in\mathbb{N}$, we can consider the partial function $\varphi(\ple{n,\blank}):\mathbb{N}\dashrightarrow\mathbb{N}$, $m\mapsto\varphi(\ple{n,m})$ when it exists; define $\psi:\mathbb{N}\to\mathbb{N}$ the (total) function that maps $n$ to the natural number that encodes $\varphi(\ple{n,\blank})$. For each $i\in I$, the restriction $\psi_{\vert r(i)}$ is defined over all $r(i)$, and its image is in $(p\to q)(i)$, proving $r\leq p\to q$: indeed, take $n\in r(i)$, then $\psi(n)\in(p\to q)(i)$ if and only if $\varphi(\ple{n,\blank})$ maps $p(i)$ to $q(i)$, but if we take any $m\in p(i)$, then $\ple{n,m}\in(r\land p)(i)$, so that $\varphi(\ple{n,m})\in q(i)$. Now, to prove the converse, suppose $r\leq p\to q$, if and only if there exists $\psi:\mathbb{N}\dashrightarrow\mathbb{N}$ such that for every $i\in I$, $\psi_{\vert r(i)}:r(i)\to(p\to q)(i)$ is a total function. For any $k\in\mathbb{N}$, recall that $k=\ple{n,m}$ where $n=\pi_1(k)$ and $m=\pi_2(k)$; if $\psi(n)$ exist, call $\theta_n:\mathbb{N}\dashrightarrow\mathbb{N}$ the partial function encoded by the natural number $\psi(n)$. Define $\varphi:\mathbb{N}\dashrightarrow\mathbb{N}$ such that $\ple{n,m}\mapsto \theta_n(m)$ whenever both $\psi(n)$ and $\theta_n(m)$ are defined. For each $i\in I$, the restriction $\varphi_{\vert(r\land p)(i)}$ is defined over all $(r\land p)(i)$, and its image is in $q(i)$, proving $r\land p\leq q$: indeed, take $k=\ple{n,m}\in(r\land p)(i)$, hence $n\in r(i)$ and $m\in p(i)$; then $\psi(n)$ is defined and belongs to $(p\to q)(i)$, hence encodes a partial recursive function $\theta_n$ that maps $p(i)$ to $q(i)$. Since $m\in p(i)$, we have $\varphi(k)=\theta_n(m)\in q(i)$, as claimed.

Take then $\alpha: J\to I$: for any $j\in J$ we have on the one hand $(R(\alpha)(p\to q))(j)=(p\to q)(\alpha(j))=\{e\in\mathbb{N}\mid e \text{ encodes a partial recursive function }\theta:\mathbb{N}\dashrightarrow\mathbb{N}\text{ such that }\theta\text{ maps }p(\alpha(j))\text{ in }q(\alpha(j))\}$, and on the other hand
\begin{align*}(R(\alpha)(p)\to R(\alpha)(q))(j)=\{d\in\mathbb{N}\mid \, & d \text{ encodes a partial recursive function }\tau:\mathbb{N}\dashrightarrow\mathbb{N}\\
&\text{ such that }\tau\text{ maps }R(\alpha)(p)(j)\text{ in }R(\alpha)(q)(j)\},\end{align*}
so the implication is preserved by reindexings, hence $R$ is an implicational doctrine.
\pointinproof{{$R$ is existential}}
For each pair of non-empty sets $I,J$, consider $\pr1:I\times J\to I$ and define $\exists^J_I:R(I\times J)\to R(I)$ that maps a function $q:I\times J\to\mathscr{P}(\mathbb{N})$ to $\exists^J_Iq:I\to\mathscr{P}(\mathbb{N})$, $(\exists^J_Iq)(i)=\bigcup_{j\in J}q(i,j)$. This is the left adjoint to $R(\pr1)$: $\exists^J_Iq\leq p$ if and only if there exists $\varphi:\mathbb{N}\dashrightarrow\mathbb{N}$ such that for all $i\in I$, $\varphi_{\vert{(\exists^J_Iq)(i)}}:\bigcup_{j\in J}q(i,j)\to p(i)$ is a total function, if and only if there exists $\varphi:\mathbb{N}\dashrightarrow\mathbb{N}$ such that for all $i\in I$ and $j\in J$, $\varphi_{\vert{q(i,j)}}:q(i,j)\to p(i)$ is a total function, if and only if $q\leq R(\pr1)p$.

To show Beck-Chevalley condition, take a function $\alpha: K\to I$: for any $q\in R(I\times J)$ and $k\in K$,
\begin{equation*}R(\alpha)(\exists^J_Iq)(k)=(\exists^J_Iq)(\alpha(k))=\bigcup_{j\in J}q(\alpha(k),j)\end{equation*}
and also
\begin{equation*}(\exists^J_KR(\alpha\times\id{J})q)(k)=\bigcup_{j\in J}(R(\alpha\times\id{J})q)(k,j)=\bigcup_{j\in J}q(\alpha(k),j)\end{equation*}
so that $R(\alpha)(\exists^J_Iq)=\exists^J_KR(\alpha\times\id{J})q$, as claimed.

To show Frobenius reciprocity, for any $q\in R(I\times J)$, $p\in R(I)$, and $i\in I$
\begin{equation*}\exists^J_I(q\land R(\pr1)p)(i)=\bigcup_{j\in J}((q\land R(\pr1)p)(i,j))=\bigcup_{j\in J}\{\ple{a,b}\in\mathbb{N}\mid a\in q(i,j),b\in p(i)\}\end{equation*}
and also
\begin{equation*}(\exists^J_Iq\land p)(i)=\{\ple{a,b}\in\mathbb{N}\mid a\in \bigcup_{j\in J}q(i,j),b\in p(i)\}\end{equation*}
so that $\exists^J_I(q\land R(\pr1)p)=\exists^J_Iq\land p$, hence Frobenius reciprocity holds.
\pointinproof{{$R$ is consistent}}
Take $R(\{\star\})=\mathscr{P}(\mathbb{N})$; $T_{\{\star\}}\nleq B_{\{\star\}}$ since for every partial recursive function $\varphi:\mathbb{N}\dashrightarrow\mathbb{N}$ is it not the case that $\varphi_{\vert{\mathbb{N}}}:\mathbb{N}\to\emptyset$ can be defined.
\pointinproof{{$R$ is rich}}
Take any $q\in R(J)$ for a non-empty set $J$, we then look for a function $\overline{c}:\{\star\}\to J$, hence an element $c=\overline{c}(\star)\in J$, such that $\exists^J_{\{\star\}}q\leq R(\overline{c})q$, i.e.\ such that there exists a partial recursive function $\varphi:\mathbb{N}\dashrightarrow\mathbb{N}$ such that $\varphi_{\vert{\bigcup_{j\in J}q(j)}}:\bigcup_{j\in J}q(j)\to q(c)$ is a total function. Here is the point where the usual realizability doctrine defined over $\Set$ does not satisfy the needed assumption, and we need to remove the empty set from the base category. If $\bigcup_{j\in J}q(j)=\emptyset$, choose any $c\in J$ and $\varphi=\id{\mathbb{N}}$, so that ${\id{\mathbb{N}}}_{\vert{\emptyset}}:\emptyset\to q(c)$ is a total function, as claimed. On the other hand, if $\bigcup_{j\in J}q(j)\neq\emptyset$, there exist $n\in\mathbb{N}$ and $c\in J$ such that $n\in q(c)$; choose $\varphi:\mathbb{N}\to\mathbb{N}$ to be the constant function to $n$, so that the restriction $\varphi_{\vert{\bigcup_{j\in J}q(j)}}:\bigcup_{j\in J}q(j)\to q(c)$ is a total function, again as wanted.
\pointinproof{{$R$ is universal}}
For each pair of non-empty sets $I,J$, consider $\pr1:I\times J\to I$ and define $\forall^J_I:R(I\times J)\to R(I)$ that maps a function $q:I\times J\to\mathscr{P}(\mathbb{N})$ to $\forall^J_Iq:I\to\mathscr{P}(\mathbb{N})$, $(\forall^J_Iq)(i)=\bigcap_{j\in J}q(i,j)$. This is the right adjoint to $R(\pr1)$: $p\leq \forall^J_Iq$ if and only if there exists $\varphi:\mathbb{N}\dashrightarrow\mathbb{N}$ such that for all $i\in I$, $\varphi_{\vert{p(i)}}:p(i)\to \bigcap_{j\in J}q(i,j)$ is a total function, if and only if there exists $\varphi:\mathbb{N}\dashrightarrow\mathbb{N}$ such that for all $i\in I$ and $j\in J$, $\varphi_{\vert{p(i)}}:p(i)\to q(i,j)$ is a total function, if and only if $q\leq R(\pr1)p$.
To show Beck-Chevalley condition, take a function $\alpha: K\to I$: for any $q\in R(I\times J)$ and $k\in K$,
\begin{equation*}R(\alpha)(\forall^J_Iq)(k)=(\forall^J_Iq)(\alpha(k))=\bigcap_{j\in J}q(\alpha(k),j)\end{equation*}
and also
\begin{equation*}(\forall^J_KR(\alpha\times\id{J})q)(k)=\bigcap_{j\in J}(R(\alpha\times\id{J})q)(k,j)=\bigcap_{j\in J}q(\alpha(k),j)\end{equation*}
so that $R(\alpha)(\forall^J_Iq)=\forall^J_KR(\alpha\times\id{J})q$, as claimed.
\pointinproof{{Universal quantifier not preserved---expanding the cofinite sets}} Our next goal is to find an ultrafilter $\nabla\subseteq R(\{\star\})=\mathscr{P}(\mathbb{N})$ such that the morphism we built in \Cref{prop:mod} $(\Gamma,\mathfrak{g}):R/\nabla\to\pws $ does not preserve the universal quantifier: in particular we will find a non-empty set $J$ and a $q\in R(J)$ such that $\forall_{!_J}\mathfrak{g}_{J}[q]\not\subseteq\mathfrak{g}_{\{\star\}}\underline{\forall}^J_{\{\star\}}[q]$.
Recall that, given a function between two sets $h:A\to B$, the right adjoint to the preimage $h^{-1}:\pws(B)\to\pws(A)$ sends a subset $S$ of $A$ to the set $\forall_hS\coloneqq\{b\in B\mid h^{-1}(b)\subseteq S\}$. In our case, we have $\forall_{!_J}\mathfrak{g}_{J}[q]\neq\emptyset$ if and only if $J\subseteq \mathfrak{g}_J[q]=\{j\in J\mid q(j)\in\nabla\}$. Then, observe that $\mathfrak{g}_{\{\star\}}[\forall^J_{\{\star\}}q]\neq\emptyset$ if and only if $\forall^J_{\{\star\}}q\in\nabla$.
\[\begin{tikzcd}
	{R(J)} & {R/\nabla(J)} & {\mathscr{P}(J)} \\
	{R(\{\star\})} & {R/\nabla(\{\star\})} & {\mathscr{P}(\{\star\})}
	\arrow["{\forall^J_{\{\star\}}}"', from=1-1, to=2-1]
	\arrow["{\mathfrak{q}_J}", from=1-1, to=1-2]
	\arrow["{\mathfrak{g}_J}", from=1-2, to=1-3]
	\arrow["{\mathfrak{q}_{\{\star\}}}"', from=2-1, to=2-2]
	\arrow["{\underline{\forall}^J_{\{\star\}}}"', from=1-2, to=2-2]
	\arrow["{\mathfrak{g}_{\{\star\}}}"', from=2-2, to=2-3]
	\arrow["{\forall_{!_J}}", from=1-3, to=2-3]
\end{tikzcd}\]
Suppose $\nabla\subseteq\mathscr{P}(\mathbb{N})$ is an ultrafilter that contains all cofinite sets of $\mathbb{N}$; then take $J\coloneqq\mathbb{N}$ and $q:\mathbb{N}\to\mathscr{P}(\mathbb{N})$ such that $q(n)\coloneqq\mathbb{N}\setminus\{n\}$. We show that for all $j\in J$, $q(j)\in\nabla$, but $\forall^J_{\{\star\}}q\notin\nabla$, so that $\forall_{!_J}\mathfrak{g}_{J}[q]\not\subseteq\mathfrak{g}_{\{\star\}}\underline{\forall}^J_{\{\star\}}[q]$. Since $q(j)$ is clearly cofinite for every $j$, each $q(j)\in\nabla$; then compute $\forall^J_{\{\star\}}q=\bigcap_{j\in J}q(j)=\bigcap_{n\in \mathbb{N}}\mathbb{N}\setminus\{n\}=\emptyset\notin\nabla$.
To conclude our proof, we need to show the existence of an ultrafilter over $\mathscr{P}(\mathbb{N})$ that contains every cofinite set. It is enough to prove that the filter generated by cofinite sets is a proper filter---i.e.\ does not contain the bottom element. Take the filter $F=\ple{\mathscr{C}}$ where $\mathscr{C}$ is the set of all cofinite sets of $\mathbb{N}$ and suppose that it contains the bottom element. Recall from above that the bottom is $\emptyset$ and the meet of two subsets $A,B$ of $\mathbb{N}$ is computed as $A\land B=\{\ple{a,b}\in\mathbb{N}\mid a \in A,b\in B\}$. Note that if $A$ and $B$ are cofinite, $A\land B$ is not in general cofinite, hence $\mathscr{C}$ is not a filter, as it is instead by taking the intersection as meet. However, suppose that $A\land B\leq\emptyset$ for a given pair $A,B\subseteq\mathbb{N}$, i.e.\ there exists a partial recursive function $\varphi:\mathbb{N}\dashrightarrow\mathbb{N}$ such that $\varphi_{\vert A\land B}:A\land B\to\emptyset$ is total, hence $A\land B=\emptyset$. In particular, it follows that at least one between $A$ and $B$ must be the empty set: if both $A\neq\emptyset$ and $B\neq\emptyset$, we can take $a\in A$ and $b\in B$, so that $\ple{a,b}\in A\land B\neq\emptyset$. Having noticed this, if it were the case that $\emptyset\in F$, there would exist $A_1,\dots,A_n\in\mathscr{C}$ such that $((A_1\land A_2)\land\dots\land A_{n-1})\land A_n\leq\bot$, so that one between $((A_1\land A_2)\land\dots\land A_{n-1})$ and $A_n$ would be the empty set; since $A_n\in\mathscr{C}$, we must have $((A_1\land A_2)\land\dots\land A_{n-1})=\emptyset$; by induction we get to a contradiction, so $\emptyset\notin F$, hence $F$ is a proper filter.
\end{example}
\begin{remark}
Suppose that the starting doctrine $P:\CC\op\to\Pos$ in \Cref{prop:mod} is also Boolean, meaning that we have the additional condition that $\lnot\lnot$ is the identity on each $P(X)$. Then, in particular, also $P/\nabla$ is a Boolean algebra, since the quotient preserves both implication and bottom element. Under this assumption, we obtain that the model $(\Gamma,\mathfrak{g})$ is Boolean. In particular, since the morphism is existential and Boolean, it is also universal.
\end{remark}
A little more work must be done in general if the starting doctrine is also elementary---in addition to the bounded implicational existential rich structure---and we want the model to preserve the elementary structure. So this time we define a morphism $(\Omega,\mathfrak{h}):P/\nabla\to\pws $ preserving the bounded elementary existential implicational structure. Define for each object $X$ the following equivalence relation $\sim^X_\nabla$ on $\text{Hom}_\CC(\tmn,X)$: given $c,d:\tmn\to X$, se say that $c\sim^X_\nabla d$ if and only if $P(\ple{c,d})\delta_X\in\nabla$.
\begin{itemize}
\item Reflexivity: $P(\ple{c,c})\delta_X=P(c)P(\Delta_X)\delta_X\geq P(c)\top_X=\top_\tmn\in\nabla$, so $c\sim^X_\nabla c$;
\item symmetry: suppose $P(\ple{c,d})\delta_X\in\nabla$, then since $\nabla$ is a filter
\begin{equation*}P(\ple{c,d})\delta_X\leq P(\ple{c,d})P(\ple{\pr2,\pr1})\delta_X=P(\ple{d,c})\delta_X\in\nabla,\end{equation*}
this follows from the fact that we have $\delta_X\leq P(\ple{\pr2,\pr1})\delta_X$. Indeed, using \ref{i:3-equality}.\ and \ref{i:2-equality}.\ in \Cref{def:elem}, we have in $P(X\times X\times X\times X)$
\begin{align}
&P(\ple{\pr1,\pr2})\delta_X\land P(\ple{\pr1,\pr3})\delta_X\land P(\ple{\pr2,\pr4})\delta_X\notag\\
&\leq P(\ple{\pr1,\pr2})\delta_X\land \delta_{X\times X}\label{eq:symm}\\
&\leq P(\ple{\pr3,\pr4})\delta_X.\notag
\end{align} 
Taking the reindexing along $P(\ple{\pr1,\pr1,\pr2,\pr1})$, we obtain that $\delta_X\leq P(\ple{\pr2,\pr1})\delta_X$ in $P(X\times X)$, as claimed;
\item transitivity: suppose $c\sim^X_\nabla d$ (hence also $d\sim^X_\nabla c$ by symmetry) and $d\sim^X_\nabla a$, then apply $P(\ple{d,d,c,a})$ to the outmost inequality in \eqref{eq:symm} to get $\top_\tmn\land P(\ple{d,c})\delta_X\land P(\ple{d,a})\delta_X\leq P(\ple{c,a})\delta_X$, hence $c\sim^X_\nabla a$.
\end{itemize}

Given $f:X\to Y$, post-composition $f\circ\blank:\text{Hom}_\CC(\tmn, X)\to\text{Hom}_\CC(\tmn, Y)$ is well-defined on the quotients: take $c\sim^X_\nabla d$ for some $c,d:\tmn\to X$, i.e.\ $P(\ple{c,d})\delta_X\in\nabla$, we show that $fc\sim^Y_\nabla fd$.
Since $P(\ple{\pr1,\pr1})P(f\times f)\delta_Y\land\delta_X\leq P(f\times f)\delta_Y$---see \Cref{lemma:aeq}---and $(f\times f)\ple{\pr1,\pr1}=\Delta_Yf\pr1$ we obtain $\delta_X\leq P(f\times f)\delta_Y$. Applying $P(\ple{c,d})$ we get $P(\ple{c,d})\delta_X\leq P(\ple{c,d})P(f\times f)\delta_Y=P(\ple{fc,fd})\delta_Y$, so that $fc\sim^Y_\nabla fd$ as claimed. Hence, we can define the functor
\begin{equation*}\Omega\coloneqq\text{Hom}_\CC(\tmn,\blank)/{\sim^{(\blank)}_\nabla}:\CC\to\Set_{\ast}.\end{equation*}
This preserves the products: take $a,c:\tmn\to X$ and $b,d:\tmn\to Y$, we have $\ple{a,b}\sim^{X\times Y}_\nabla\ple{c,d}$ if and only if $P(\ple{a,b,c,d})\delta_{X\times Y}\in\nabla$. Applying $P(\ple{a,b,c,d})$ to the equality $P(\ple{\pr1,\pr3})\delta_X\land P(\ple{\pr2,\pr4})\delta_Y= \delta_{X\times Y}$---see property \ref{i:3-equality}.\ in \Cref{def:elem} and \Cref{rmk:aeq}---, we get
\begin{equation*}P(\ple{a,c})\delta_X\land P(\ple{b,d})\delta_Y= P(\ple{a,b,c,d})\delta_{X\times Y},\end{equation*}
so that $P(\ple{a,b,c,d})\delta_{X\times Y}\in\nabla$ if and only if both
\begin{equation*}P(\ple{a,c})\delta_X\in\nabla\text{ and }P(\ple{b,d})\delta_Y\in\nabla,\end{equation*}
if and only if $a\sim^X_\nabla c$ and $b\sim^Y_\nabla d$; so we proved that
\begin{equation*}\text{Hom}_\CC(\tmn,X\times Y)/{\sim^{X\times Y}_\nabla}=\text{Hom}_\CC(\tmn,X)/{\sim^{X}_\nabla}\times \text{Hom}_\CC(\tmn, Y)/{\sim^{ Y}_\nabla}.\end{equation*}

Then, define for a given $X\in\text{ob}\CC$, $\mathfrak{h}_X:P/\nabla(X)\to\pws(\text{Hom}_\CC(\tmn,X)/{\sim^{X}_\nabla})$:
\begin{equation*}\mathfrak{h}_X[\varphi]=\{[c:\tmn\to X]\mid P(c)\varphi\in\nabla\}.\end{equation*}
This is well-defined, since whenever $c\sim^X_\nabla d$ and $[c]\in\mathfrak{h}_X[\varphi]$ we can apply $P(\ple{c,d})$ to the inequality $\delta_X\land P(\pr1)\varphi\leq P(\pr2)\varphi$ to get $P(c)\varphi\to P(d)\varphi\in\nabla$, and hence $P(d)\varphi\in\nabla$.
\begin{proposition}\label{prop:mod_eq}
Let $P$ be a bounded consistent implicational elementary existential rich doctrine, let $\nabla\subseteq P(\tmn)$ be an ultrafilter, and let $P/\nabla$ be the quotient doctrine. Then the pair $(\Omega,\mathfrak{h})$, where $\Omega\coloneqq\text{Hom}_\CC(\tmn,\blank)/{\sim^{(\blank)}_\nabla}$ and $\mathfrak h_X[\varphi]=\{[c:\tmn\to X]\mid P(c)\varphi\in\nabla\}$ for any object $X$ and any $[\varphi]\in P/\nabla(X)$ is a bounded elementary existential implicational morphism.
\end{proposition}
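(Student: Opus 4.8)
The plan is to check that $(\Omega,\mathfrak h)$ is a doctrine morphism preserving the bounded elementary existential implicational structure. Two of the ingredients are already in place before the statement: the functor $\Omega=\text{Hom}_\CC(\tmn,\blank)/{\sim^{(\blank)}_\nabla}$ preserves finite products, and each $\mathfrak h_X$ is well defined on the quotient $\text{Hom}_\CC(\tmn,X)/{\sim^X_\nabla}$. What remains is monotonicity, naturality, and preservation of $\top$, $\bot$, $\land$, $\to$, $\underline\exists$, and of the fibered equality. My approach is to treat everything except the fibered equality by transporting the corresponding verifications from the proof of \Cref{prop:mod}, and to obtain the elementary clause directly from the way $\sim^X_\nabla$ was defined.

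Concretely, I would first note that $(\Omega,\mathfrak h)$ is nothing but $(\Gamma,\mathfrak g)$ of \Cref{prop:mod} read at the level of $\sim^{(\blank)}_\nabla$-classes: for each $[\varphi]$ the subset $\mathfrak g_X[\varphi]=\{c\mid P(c)\varphi\in\nabla\}$ is saturated with respect to $\sim^X_\nabla$ (this is exactly the well-definedness established just before the statement), with $\mathfrak h_X[\varphi]$ its image under the surjection $\Gamma X\to\Omega X$ and $\mathfrak g_X[\varphi]$ its full preimage. Since image and preimage along a surjection respect inclusions, finite intersections, and complements, and since on saturated sets the direct image commutes with the product projections, the verifications of monotonicity, naturality, and preservation of $\top$, $\bot$, $\land$ and $\to$ carry over \emph{verbatim}, each occurrence of a closed term $c$ being replaced by its class $[c]$; in particular the implication case again uses that $\nabla$ is an \emph{ultra}filter. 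Preservation of $\underline\exists^Y_X$ is the same computation as in \Cref{prop:mod}: given $[c]$ with $P(c)\exists^Y_X\varphi\in\nabla$, richness of $P$ supplies a witness $d:\tmn\to Y$ with $P(\ple{c,d})\varphi=P(c)\exists^Y_X\varphi$, so that $([c],[d])\in\mathfrak h_{X\times Y}[\varphi]$ projects onto $[c]$, while the reverse inclusion is the unit of the adjunction.

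The genuinely new point is preservation of the fibered equality, and here $\sim^X_\nabla$ has been set up precisely so that this is automatic. The fibered equality of $P/\nabla$ on $X$ is $\underline\delta_X=[\delta_X]$ (see \Cref{lemma:pres_filter}), while the fibered equality of $\pws$ on $\Omega X$ is the diagonal. Using product preservation to identify $\Omega(X\times X)=\Omega X\times\Omega X$, so that a class $[\ple{c,d}]$ corresponds to the pair $([c],[d])$, one computes
\[\mathfrak h_{X\times X}([\delta_X])=\{[\ple{c,d}]\mid P(\ple{c,d})\delta_X\in\nabla\}=\{([c],[d])\mid c\sim^X_\nabla d\}=\{([c],[c])\mid [c]\in\Omega X\}=\delta_{\Omega X},\]
where the middle equality is exactly the definition of $\sim^X_\nabla$. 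Hence $\mathfrak h$ sends the fibered equality of $P/\nabla$ to that of $\pws$, so $(\Omega,\mathfrak h)$ is elementary, and combined with the previous paragraph it is a bounded elementary existential implicational morphism.

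I expect the main obstacle to be bookkeeping rather than conceptual: one must check that each set-theoretic operation appearing in the proof of \Cref{prop:mod} (intersection for $\land$, union with a complement for $\to$, direct image for $\underline\exists$) genuinely descends along the quotient maps on the saturated sets $\mathfrak g_X[\varphi]$, so that the replacement of $c$ by $[c]$ throughout is legitimate and no collapsing is introduced. Once this is granted, the elementary clause follows immediately from the display above, and the proposition is proved.
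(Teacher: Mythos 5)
Your proposal is correct and follows essentially the same route as the paper: the paper likewise disposes of monotonicity, naturality, and preservation of $\top$, $\bot$, $\land$, $\to$, $\underline{\exists}$ by transporting the verifications of \Cref{prop:mod} to $\sim^{(\blank)}_\nabla$-classes (legitimate by the well-definedness of $\mathfrak h_X$ established before the statement), and then concludes with exactly your computation $\mathfrak h_{X\times X}([\delta_X])=\{([c],[d])\mid P(\ple{c,d})\delta_X\in\nabla\}=\Delta_{\Omega X}$, which is immediate from the definition of $\sim^X_\nabla$. Your saturation argument merely makes explicit the bookkeeping the paper compresses into ``all proofs from \Cref{prop:mod} can be rearranged,'' so there is no gap.
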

\begin{proof}
All proofs from \Cref{prop:mod} can be rearranged in this scenario to prove that $(\Omega,\mathfrak{h})$ is a morphism of doctrines, preserving bounded implicational existential structure. The last thing left to prove is that $(\Omega,\mathfrak{h})$ preserves the fibered equality:
\begin{equation*}\mathfrak{h}_{A\times A}(\underline{\delta}_A)=\mathfrak{h}_{A\times A}([{\delta}_A])=\{([c:\tmn\to A],[d:\tmn\to A])\in\Omega A\times \Omega A\mid P(\ple{c,d})\delta_A\in\nabla\}=\Delta_{\Omega A}.\qedhere\end{equation*}\end{proof}
We now have all the ingredients to generalize Henkin's Theorem.
\begin{theorem}\label{thm:mod_ex}
Let $P$ be a bounded existential implicational doctrine, with non-trivial fibers and with a small base category. Then there exists a bounded existential implicational model of $P$ in the doctrine of subsets $\pws :\Set_{\ast}\op\to\Pos$.
\end{theorem}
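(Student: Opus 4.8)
The plan is to assemble the three pillars established earlier in the paper---richness, consistency, and the existence of a model for a rich consistent doctrine---into a single composite morphism landing in $\pws$.

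First I would run the two colimit constructions of \Cref{sect:dir_colim,sect:dir_colim2} on $P$ to obtain the doctrine $\underrightarrow{P}$ together with the morphism $(F,\mathfrak{f}):P\to\underrightarrow{P}$ of \Cref{rmk:recap}. Because $P$ is bounded existential implicational, \Cref{prop:const,prop:ax} ensure that every object and every transition morphism appearing in the diagrams $D$ and $\DELTA$ carries this structure; hence by \Cref{prop:addit_struct} the colimit $\underrightarrow{P}$ is itself bounded existential implicational, and $(F,\mathfrak{f})$ preserves it.

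Next I would record the two structural facts about $\underrightarrow{P}$. The richness theorem of \Cref{sub:weak_univ_prop} gives that $\underrightarrow{P}$ is rich, and, since $P$ has non-trivial fibers, \Cref{prop:coher} gives that $\underrightarrow{P}$ is consistent. Thus $\underrightarrow{P}$ satisfies exactly the hypotheses of \Cref{prop:mod}: it is a bounded consistent existential implicational rich doctrine.

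Finally I would produce the model. Consistency yields $\top\neq\bot$ in $\underrightarrow{P}(\tmn)$, so by \Cref{lemma:ultraf} the proper filter $\{\top\}$ extends to an ultrafilter $\nabla\subseteq\underrightarrow{P}(\tmn)$; by \Cref{lemma:pres_filter} the quotient $\underrightarrow{P}/\nabla$ is again bounded existential implicational and the quotient morphism $(\id{},\mathfrak{q})$ preserves this structure, while \Cref{prop:mod} furnishes a bounded existential implicational model $(\Gamma,\mathfrak{g}):\underrightarrow{P}/\nabla\to\pws$. The required model of $P$ is then the composite
\[P\xrightarrow{(F,\mathfrak{f})}\underrightarrow{P}\xrightarrow{(\id{},\mathfrak{q})}\underrightarrow{P}/\nabla\xrightarrow{(\Gamma,\mathfrak{g})}\pws,\]
which preserves all of the bounded existential implicational structure because each factor does. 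Since the substantive content is concentrated in the preceding propositions, there is no genuinely hard computational step here; the only point demanding care is tracking structure-preservation along the whole chain, and this is immediate from the cited results once one checks that the non-trivial-fibers hypothesis on $P$ is precisely what \Cref{prop:coher} needs to guarantee consistency of $\underrightarrow{P}$.
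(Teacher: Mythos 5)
Your proposal is correct and follows essentially the same route as the paper's own proof: both assemble the composite $P\to\underrightarrow{P}\to\underrightarrow{P}/\nabla\to\pws$, using \Cref{prop:coher} for consistency, \Cref{lemma:ultraf} and \Cref{lemma:pres_filter} for the ultrafilter quotient, and \Cref{prop:mod} for the final model. Your version simply makes explicit the structure-preservation citations (\Cref{prop:const,prop:ax,prop:addit_struct}) that the paper leaves implicit via \Cref{rmk:recap}.
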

\begin{proof}
Do the construction in \Cref{rmk:recap} to get a morphism $(F,\mathfrak f):P\to\underrightarrow{P}$ that preserves bounded implicational existential structure; moreover by \Cref{prop:coher} the doctrine $\underrightarrow{P}$ is consistent. So $\underrightarrow{P}$ is an existential, bounded, implicational doctrine, consistent and rich, then we can choose an ultrafilter $\nabla\subseteq\underrightarrow{P}(\tmn)$ and take the quotient over it, and then the model $(\Gamma,\mathfrak g)$ of such quotient. The composition
\begin{equation*}P\xrightarrow{(F,\mathfrak f)}\underrightarrow{P}\xrightarrow{(\id{},\mathfrak q)}\underrightarrow{P}/\nabla\xrightarrow{(\Gamma,\mathfrak g)}\pws \end{equation*}
is a model of $P$, preserving all said structure.\end{proof}
\begin{theorem}\label{thm:mod_ex_eq}
Let $P$ be a bounded elementary existential implicational doctrine, such that each of its fibers is non-trivial and with a small base category. Then there exists a bounded elementary existential implicational model of $P$ in the doctrine of subsets $\pws :\Set_{\ast}\op\to\Pos$.
\end{theorem}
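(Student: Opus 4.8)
The strategy is to run the very same argument used for \Cref{thm:mod_ex}, but to carry the elementary structure along at every stage, and to finish with the elementary model of \Cref{prop:mod_eq} in place of \Cref{prop:mod}. Concretely, I would produce the chain
\[
P\xrightarrow{(F,\mathfrak f)}\underrightarrow{P}\xrightarrow{(\id{},\mathfrak q)}\underrightarrow{P}/\nabla\xrightarrow{(\Omega,\mathfrak h)}\pws,
\]
and check that each of the three morphisms preserves the bounded \emph{elementary} existential implicational structure, so that the composite does as well.

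First I would build $(F,\mathfrak f)\colon P\to\underrightarrow{P}$ exactly as in \Cref{rmk:recap}, via the two directed colimits $P\to\underline{P}\to\underrightarrow{P}$. The only new point relative to \Cref{thm:mod_ex} is that $\underrightarrow{P}$ is now also elementary and that $(F,\mathfrak f)$ preserves the fibered equality. This is immediate from the bookkeeping already set up: each finite step of the first diagram is a ``add a constant'' construction $P_X$, which preserves elementary structure by \Cref{prop:const}, and each finite step of the second diagram is an ``add an axiom'' construction $P_\varphi$, which preserves it by \Cref{prop:ax}; directed colimits preserve elementary structure and the colimit legs remain elementary by \Cref{prop:addit_struct}. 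Hence $\underrightarrow{P}$ is a bounded elementary existential implicational doctrine and $(F,\mathfrak f)$ is elementary. Since $P$ is in particular a bounded existential implicational doctrine with non-trivial fibers over a small base, \Cref{prop:coher} applies verbatim (the elementary structure is simply extra, unused data), giving that $\underrightarrow{P}$ is consistent; and by the richness result of \Cref{sub:weak_univ_prop} it is rich. Thus $\underrightarrow{P}$ is a bounded consistent elementary existential implicational rich doctrine.

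Consistency gives $\top\nleq\bot$ in $\underrightarrow{P}(\tmn)$, so $\{\top\}$ is a proper filter, which I extend to an ultrafilter $\nabla\subseteq\underrightarrow{P}(\tmn)$ by \Cref{lemma:ultraf}. Forming the quotient $\underrightarrow{P}/\nabla$, the quotient doctrine and the quotient map $(\id{},\mathfrak q)$ are elementary by \Cref{lemma:pres_filter}, and bounded, implicational, existential by the same lemma. I then apply \Cref{prop:mod_eq} to $\underrightarrow{P}$ with the ultrafilter $\nabla$: since $\underrightarrow{P}$ is a bounded consistent implicational elementary existential rich doctrine, the pair $(\Omega,\mathfrak h)\colon\underrightarrow{P}/\nabla\to\pws$ is a bounded elementary existential implicational morphism. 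Composing the three arrows yields a model of $P$ in $\pws$ preserving all of the stated structure, which is the assertion.

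The main obstacle is not a deep one but a matter of faithful propagation: everything hinges on the fibered equality $\delta$ surviving the two ``syntactic'' constructions and their directed colimits, and on the quotient map and the final model respecting it. All of these are exactly the elementary clauses of \Cref{prop:const}, \Cref{prop:ax}, \Cref{prop:addit_struct}, \Cref{lemma:pres_filter} and \Cref{prop:mod_eq}, so the real work was done in establishing those results; here it remains only to note that $(F,\mathfrak f)$ preserves $\delta$ (a point \Cref{rmk:recap} recorded only for the implicational and existential structure, but which follows by the identical colimit argument) and to invoke \Cref{prop:mod_eq} rather than \Cref{prop:mod} at the last step.
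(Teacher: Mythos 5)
Your proposal is correct and follows essentially the same route as the paper: the paper's proof of \Cref{thm:mod_ex_eq} is literally the proof of \Cref{thm:mod_ex} with $(\Omega,\mathfrak h)$ from \Cref{prop:mod_eq} substituted for $(\Gamma,\mathfrak g)$, relying on the elementary clauses of \Cref{prop:const}, \Cref{prop:ax}, \Cref{prop:addit_struct} and \Cref{lemma:pres_filter} exactly as you do. Your extra remark that $(F,\mathfrak f)$ preserves the fibered equality---a point \Cref{rmk:recap} leaves implicit---is a correct and harmless elaboration of the same argument.
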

\begin{proof}
Do as above but take $(\Omega,\mathfrak h)$ instead of $(\Gamma,\mathfrak g)$.\end{proof}
\appendix
\section{Existence of directed colimits in $\Dott$}\label{sub:dir_colim}
This appendix is devoted to the construction of direct colimits in the category $\Dott$. We demonstrate that this construction preserves many properties, which are crucial for our work in \Cref{sect:dir_colim,sect:dir_colim2}. Specifically, we use these results to verify that two constructions we introduce respect all the needed structures of the starting doctrine.

While some of the results in this section are well-known, such as how directed colimits are computed in categories like $\Cat$ or $\Pos$, we present them here in detail in order to compute how additional structure is preserved.
\begin{proposition}[\Cref{sect:dir_col}, \Cref{prop:dir_colim}]\label{prop:dir_colim_app}
The category $\Dott$ has colimits over directed preorders.
\end{proposition}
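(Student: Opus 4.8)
The plan is to build the colimit doctrine levelwise, separately on base categories and on fibers, and then glue the two constructions together. Fix a directed preorder $I$ and a diagram $D\colon I\to\Dott$ with $D(i)=P_i\colon\CC_i\op\to\Pos$ and $D(i\leq j)=(F_{ij},\mathfrak f_{ij})\colon P_i\to P_j$.

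First I would construct the base category $\CC_\bullet$ as the directed colimit $\varinjlim_i\CC_i$ in $\Cat$, using the explicit model: objects are equivalence classes of objects of the $\CC_i$'s, identifying $A\in\CC_i$ with $F_{ij}A\in\CC_j$, and each hom-set $\Hom_{\CC_\bullet}([A],[B])$ is the directed colimit of the hom-sets $\Hom_{\CC_j}(F_{ij}A,F_{ij'}B)$ taken over common stages $j\geq i,i'$; composition and identities are well-defined because $I$ is directed, so any finite collection of arrows can be transported to a single stage. The next point is to verify that $\CC_\bullet$ has finite products. The images of the terminal objects $\tmn_{\CC_i}$ all agree and give a terminal object, and given $[A],[B]$ one lifts both to a common index $i$ and sets $[A]\times[B]:=[A\times_{\CC_i}B]$, with projections the classes of the projections in $\CC_i$. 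This is well-defined and satisfies the universal property precisely because every transition functor $F_{ij}$ preserves finite products, so the choice of stage is immaterial.

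Then I would define the fibers. For an object $[A]$ represented by $A\in\CC_i$, set $P_\bullet([A])=\varinjlim_{j\geq i}P_j(F_{ij}A)$, the directed colimit in $\Pos$, with the induced order: $[\alpha]\leq[\beta]$ iff the representatives become comparable at some common later stage. Since the subdiagram $\{j\geq i\}$ is cofinal in $I$, this poset is independent of the chosen representative $A$. The reindexing $P_\bullet(g)$ along an arrow $g\colon[A]\to[B]$ is defined by transporting $g$ and a given fiber element to a common index $j$ and applying $P_j(\bar g)$ for a representative $\bar g$ of $g$; functoriality (compatibility with composition and with identities) is checked by moving two composable arrows and the relevant fiber elements to a single common stage. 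This makes $P_\bullet\colon\CC_\bullet\op\to\Pos$ a doctrine, and the canonical coprojections assemble into morphisms $(F_i,\mathfrak f_i)\colon P_i\to P_\bullet$, where $F_i$ is the structure functor into $\varinjlim_i\CC_i$ and $(\mathfrak f_i)_A\colon P_i(A)\to P_\bullet(F_iA)$ is the coprojection into the colimit of fibers; that these form a cocone is immediate from the construction.

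Finally, for the universal property, given any cocone $(R,\{(G_i,\mathfrak g_i)\})$ with $R\colon\ct{D}\op\to\Pos$, the universal property of $\CC_\bullet$ in $\Cat$ yields a unique functor $G\colon\CC_\bullet\to\ct{D}$ with $GF_i=G_i$, and $G$ preserves finite products by the product construction above together with the fact that each $G_i$ does. For each object the universal property of the directed colimit of posets produces the unique monotone component $\mathfrak g'_{[A]}\colon P_\bullet([A])\to R(G[A])$ compatible with the $\mathfrak g_i$; naturality of $\mathfrak g'$ and uniqueness of $(G,\mathfrak g')$ follow by reducing each required equation to a statement at a common stage and invoking the corresponding property of the $(G_i,\mathfrak g_i)$. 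The main obstacle throughout is bookkeeping the two nested layers of equivalence classes---arrows of the base and elements of the fibers are both directed colimits---so that reindexing is well-defined and functorial: each verification requires selecting a common index realizing simultaneously all the arrows and fibered elements involved, and then checking independence of that choice, which is exactly where directedness of $I$ is used repeatedly.
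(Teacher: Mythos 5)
Your proposal is correct and follows essentially the same route as the paper's proof: the base category is the directed colimit in $\Cat$ with products formed at a common stage (using that the transition functors preserve finite products), the fibers are directed colimits in $\Pos$ with reindexing defined by transport to a common index, and the universal property is verified componentwise via the cocone maps, exactly as done in the paper. The only differences are presentational---you invoke cofinality and the universal property of the colimit in $\Cat$ where the paper carries out the corresponding bijection and well-definedness checks explicitly.
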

\begin{proof}
We begin by considering a directed preorder $I$, so that for each $i,j\in I$ there exists a $k\in I$ such that $k\geq i,j$. Then suppose to have a diagram over this preorder, i.e.\ a functor $D:I\to\Dott$. In particular, for all $i\in I$ we have $P_i\coloneqq D(i):\CC_i\op\to\Pos$, and for all $i\leq k$ a morphism $(F_{ik},\mathfrak{f}_{ik}):P_i\to P_k$ where $F_{ik}:\CC_i\to\CC_k$ is a functor preserving finite products and $\mathfrak{f}_{ik}:P_i\xrightarrow{\cdot} P_kF_{ik}\op$ is a natural transformation. Moreover, we ask for $(F_{ii},\mathfrak{f}_{ii})$ to be the identity on $P_i$, and for $(F_{jk},\mathfrak{f}_{jk})\circ(F_{ij},\mathfrak{f}_{ij})=(F_{ik},\mathfrak{f}_{ik})$ whenever $i\leq j\leq k$.

Our goal is to define a suitable doctrine $P_\bullet:\CC_\bullet\op\to\Pos$, and then show that it is the colimit over $I$.
\pointinproof{The base category $\CC_\bullet$}
The base category $\CC_\bullet$ is the colimit over $I$ in $\Cat$ of the diagram given by $\CC_i$'s and $F_{ij}$'s. Objects are classes of objects from any $\CC_i$, identified as follows:
\begin{equation*}\text{ob}\CC_\bullet=\faktor{\bigsqcup_{i\in I}\CC_i}{\sim},\end{equation*}
where two objects $A_{(i)},B_{(j)}$ in $\CC_i$ and $\CC_j$ respectively are such that $A_{(i)}\sim B_{(j)}$ if and only if there exists $k\geq i,j$ such that $F_{ik}A_{(i)}=F_{jk}B_{(j)}$ in $\CC_k$.
Then for any pair of objects $[A_{(i)}], [B_{(j)}]$ we have as morphisms:
\begin{equation*}\text{Hom}_{\CC_\bullet}\big([A_{(i)}], [B_{(j)}]\big)=\faktor{\bigsqcup_{k\geq i,j}\text{Hom}_{\CC_k}\big(F_{ik}A_{(i)}, F_{jk}B_{(j)}\big)}{\sim}\end{equation*}
where $(f_k:F_{ik}A_{(i)}\to F_{jk}B_{(j)})\sim(f_{k'}:F_{ik'}A_{(i)}\to F_{jk'}B_{(j)})$ if and only if there exists $h\geq k,k'$ such that $F_{kh}f_k=F_{k'h}f_{k'}$ in $\CC_h$.
This is well-defined: suppose $i\leq l$ and $j\leq m$, so that $[A_{(i)}]=[F_{il}A_{(i)}]$ and $[B_{(j)}]=[F_{jm}B_{(j)}]$, we want to show that the inclusion
\begin{equation*}\bigsqcup_{n\geq l,m}\text{Hom}_{\CC_{n}}\big({F_{ln}F_{il}}A_{(i)}, {F_{mn}F_{jm}}B_{(j)}\big)\hookrightarrow \bigsqcup_{k\geq i,j}\text{Hom}_{\CC_k}\big(F_{ik}A_{(i)}, F_{jk}B_{(j)}\big)\end{equation*}
becomes a bijection on the corresponding quotients:
\[\begin{tikzcd}
	\bigsqcup_{n\geq l,m}\text{Hom}_{\CC_{n}}\big(F_{in}A_{(i)}, F_{jn}B_{(j)}\big) & \bigsqcup_{k\geq i,j}\text{Hom}_{\CC_k}\big(F_{ik}A_{(i)}, F_{jk}B_{(j)}\big) \\
	\faktor{\bigsqcup_{n\geq l,m}\text{Hom}_{\CC_{n}}\big(F_{in}A_{(i)}, F_{jn}B_{(j)}\big)}{\sim} & \faktor{\bigsqcup_{k\geq i,j}\text{Hom}_{\CC_k}\big(F_{ik}A_{(i)}, F_{jk}B_{(j)}\big)}{\sim}
	\arrow[hook, from=1-1, to=1-2]
	\arrow[from=1-1, to=2-1]
	\arrow[dotted, from=2-1, to=2-2]
	\arrow[from=1-2, to=2-2]
\end{tikzcd}\]
Take $f_{n_1},f_{n_2}$, with $n_s\geq l,m$, and $f_{n_s}:F_{i{n_s}}A_{(i)}\to F_{j{n_s}}B_{(j)}$ for $s=1,2$. It follows from the definition that $f_{n_1}\sim f_{n_2}$ as arrows seen in the union on the left if and only if $f_{n_1}\sim f_{n_2}$ seen in the union on the right, so that the dotted arrow is both well-defined and injective. This arrow is also surjective: consider $f_{k}:F_{ik}A_{(i)}\to F_{jk}B_{(j)}$ for some $k\geq i,l$ and take $n\geq k,l,m$; then clearly $[f_k]=[F_{kn}f_k]$, with $F_{kn}f_k$ belonging to the union on the left.
To conclude, since the preorder is directed one can show the isomorphism between such quotients of unions also in the general case $i\nleq l$ or $j\nleq m$.

Composition in ${\CC_\bullet}$ between two composable arrows
\begin{equation*}[A_{(i)}]\xrightarrow{[f_k]}[B_{(j)}]\xrightarrow{[f_{k'}]}[C_{(l)}],\end{equation*}
where $f_k:F_{ik}A_{(i)}\to F_{jk}B_{(j)}$ and $f_{k'}:F_{jk'}B_{(j)}\to F_{lk'}C_{(l)}$, is $[f_{k'}]\circ[f_k]=[F_{k'h}f_{k'}\circ F_{kh}f_k]$ for a given $h\geq k,k'$. This is clearly well-defined on the choice of $h$, and on the representative of $f_k$ and $f_{k'}$.
\pointinproof{Finite products in $\CC_\bullet$} The category $\CC_\bullet$ has binary products, defined in the obvious way: take objects $[A_{(i)}],[B_{(j)}]$ and call $[A_{(i)}]\underrightarrow{\times}[B_{(j)}]\coloneqq[F_{ik}A_{(i)}\times F_{jk}B_{(j)}]$, having as projections the classes of projections from $F_{ik}A_{(i)}\times F_{jk}B_{(j)}$ in $\CC_k$ for some $k\geq i,l$---note that $[F_{ik}A_{(i)}]=[A_{(i)}]$ and similarly for the other object, so the codomains of projections make sense in the diagram below. Such class of objects is well-defined because the $F_{\star*}$'s preserve products.
To see that it is indeed a product consider the diagram:
\[\begin{tikzcd}
	& {[V_{(h)}]} \\
	{} & {[F_{ik}A_{(i)}\times F_{jk}B_{(j)}]} \\
	{[A_{(i)}]} && {[B_{(j)}]} \\
	{[F_{ik}A_{(i)}]} && {[F_{jk}B_{(j)}]}
	\arrow["{[\pr1]}", from=2-2, to=4-1]
	\arrow[dashed, from=1-2, to=2-2]
	\arrow["{=}"{marking}, draw=none, from=4-3, to=3-3]
	\arrow["{[\alpha_s]}"', bend right, from=1-2, to=3-1]
	\arrow["{=}"{marking}, draw=none, from=4-1, to=3-1]
	\arrow["{[\beta_t]}", bend left, from=1-2, to=3-3]
	\arrow["{[\pr2]}"', from=2-2, to=4-3]
\end{tikzcd}\]
where $\alpha_s:F_{hs}V_{(h)}\to F_{is}A_{(i)}$, $\beta_t:F_{ht}V_{(h)}\to F_{jt}B_{(j)}$, for some $s\geq h,i$ and $t\geq h,j$. Now let $m\geq i,j,k,h,s,t$ and consider the diagram in $\CC_m$:
\[\begin{tikzcd}
	& {F_{hm}V_{(h)}} \\
	{} & {F_{im}A_{(i)}\times F_{jm}B_{(j)}} \\
	{F_{im}A_{(i)}} && {F_{jm}B_{(j)}}
	\arrow["{\ple{ F_{sm}(\alpha_s),F_{tm}(\beta_t)}}"description, from=1-2, to=2-2]
	\arrow["{F_{sm}(\alpha_s)}"', bend right, from=1-2, to=3-1]
	\arrow["{F_{tm}(\beta_t)}", bend left, from=1-2, to=3-3]
	\arrow["{q_1}", from=2-2, to=3-1]
	\arrow["{q_2}"', from=2-2, to=3-3]
\end{tikzcd}\]
Clearly $[\ple{ F_{sm}(\alpha_s),F_{tm}(\beta_t)}]$ makes the diagram in $\CC_\bullet$ commute. Now, to prove uniqueness, take $\psi_n=\ple{{\psi_n}_1,{\psi_n}_2}:F_{hn}V_{(h)}\to F_{in}A_{(i)}\times F_{jn}B_{(j)}$ for some $n\geq h,k$, such that $[{\psi_n}_1]=[\alpha_s]$ and $[{\psi_n}_2]=[\beta_t]$. Then, there exists $r\geq n,s,t$ such that $F_{nr}({\psi_n}_1)=F_{sr}(\alpha_s)$ and $F_{nr}({\psi_n}_2)=F_{tr}(\beta_t)$; in particular
\begin{equation*}[\psi_n]=[\ple{ F_{nr}({\psi_n}_1),F_{nr}({\psi_n}_2)}].\end{equation*}
Finally, take $u\geq r,m$:
\begin{align*}F_{mu}(\ple{ F_{sm}(\alpha_s),F_{tm}(\beta_t)})&=\ple{ F_{su}(\alpha_s),F_{tu}(\beta_t)}\\
&=\ple{ F_{ru}F_{sr}(\alpha_s),F_{ru}F_{tr}(\beta_t)}\\
&=F_{ru}\ple{ F_{nr}({\psi_n}_1),F_{nr}({\psi_n}_2)},\end{align*}
i.e.\ $[\ple{ F_{sm}(\alpha_s),F_{tm}(\beta_t)}]=[\psi_n]$.

In order to conclude the argument about the existence of finite products, observe that if $\tmn_i$ is a terminal object in $\CC_i$, then $[\tmn_i]$ is a terminal object in $\CC_\bullet$: take an object $[B_{(j)}]$, $k\geq i,j$ and consider the unique map $!_{F_{jk}B_{(j)}}:F_{jk}B_{(j)}\to \tmn_k$ in $\CC_k$. Then $[!_{F_{jk}B_{(j)}}]$ is a map from $[B_{(j)}]$ to $[\tmn_i]=[F_{ik}\tmn_i]=[\tmn_k]$. We show uniqueness by considering a map $[u_h]:[B_{(j)}]\to[\tmn_i]$ for some $u_h:F_{jh}B_{(j)}\to F_{ih}\tmn_i$: then $u_h=\,!_{F_{jh}B_{(j)}}$ in $\CC_h$. Taking $l\geq k,h$ we get $[!_{F_{jk}B_{(j)}}]=[F_{kl}(!_{F_{jk}B_{(j)}})]=[!_{F_{jl}B_{(j)}}]=[F_{hl}(!_{F_{jh}B_{(j)}})]=[!_{F_{jh}B_{(j)}}]=[u_h]$.
\pointinproof{$P_\bullet$ on objects} Now that we built a suitable base category with finite products, we define the doctrine $P_\bullet$.
For an object $[A_{(i)}]$, we take:
\begin{equation*}P_\bullet([A_{(i)}])=\faktor{\bigsqcup_{k\geq i}P_k(F_{ik}A_{(i)})}{\sim}\end{equation*}
where $a_{k_1}\sim a_{k_2}$, with $a_{k_s}\in P_{k_s}(F_{ik_s}A_{(i)})$ for $s=1,2$, if and only if there exists $j\geq k_1,k_2$ such that
\begin{equation*}\big(\mathfrak{f}_{k_1j}\big)_{F_{ik_1}A_{(i)}}(a_{k_1})=\big(\mathfrak{f}_{k_2j}\big)_{F_{ik_2}A_{(i)}}(a_{k_2})\text{ in }P_j(F_{ij}A_{(i)}).\end{equation*}
This is well-defined on the choice of the representative of $[A_{(i)}]$: in a similar way to what we did above defining arrows in $\CC_\bullet$, we prove that the dotted arrow induced by the inclusion is bijective, in the case $l\geq i$.
\[\begin{tikzcd}
	\bigsqcup_{k\geq l}P_k(F_{ik}A_{(i)}) & \bigsqcup_{n\geq i}P_n(F_{in}A_{(i)}) \\
	\faktor{\bigsqcup_{k\geq l}P_k(F_{ik}A_{(i)})}{\sim} & \faktor{\bigsqcup_{n\geq i}P_n(F_{in}A_{(i)})}{\sim}
	\arrow[hook, from=1-1, to=1-2]
	\arrow[from=1-1, to=2-1]
	\arrow[dotted, from=2-1, to=2-2]
	\arrow[from=1-2, to=2-2]
\end{tikzcd}\]
Take $a_{h_1}, a_{h_2}$ for $h_1, h_2\geq l$, then $a_{h_1}\sim a_{h_2}$ on the left if and only if they are equivalent on the right, hence well-definition and injectivity of the function follows. Surjectivity also follows easily: take $[b_m]$ for some $m\geq i$, and let $u\geq m,l$. Then $b_m\sim\big(\mathfrak{f}_{mu}\big)_{F_{im}A_{(i)}}(b_m)\in P_u(F_{iu}A_{(i)})$ as wanted.
If we fix $A_{(i)}$, we observe that $P_\bullet([A_{(i)}])$ is a directed colimit in $\Pos$ on the diagram defined over elements of $I$ greater or equal to $i$. An element $j\geq i$ is sent to $P_j(F_{ij}A_{(i)})$, and for any $j\leq k$ we have the monotone function $\big(\mathfrak{f}_{jk}\big)_{F_{ij}A_{(i)}}:P_j(F_{ij}A_{(i)})\to P_k(F_{ik}A_{(i)})$. Hence we defined a poset for each object of $\CC_\bullet$.
\pointinproof{$P_\bullet$ on arrows} Take a $\CC_\bullet$-arrow $[f]:[A_{(i)}]\to[B_{(j)}]$ for some $f:F_{ik}A_{(i)}\to F_{jk}B_{(j)}\in\text{arr}\CC_k, k\geq i,j$.
\[\begin{tikzcd}[row sep=5pt]
	{\CC_\bullet\op} & \Pos \\
	{[B_{(j)}]} & {P_\bullet([B_{(j)}])} & {P_\bullet([F_{jk}B_{(j)}])} \\
	\\
	\\
	\\
	{[A_{(i)}]} & {P_\bullet([A_{(i)}])} & {P_\bullet([F_{ik}A_{(i)}])}
	\arrow[""{name=0, anchor=center, inner sep=0}, "{P_\bullet([f])}", from=2-2, to=6-2]
	\arrow["{=}"{description}, draw=none, from=2-2, to=2-3]
	\arrow["{=}"{description}, draw=none, from=6-2, to=6-3]
	\arrow[""{name=1, anchor=center, inner sep=0}, "{[f]}", from=6-1, to=2-1]
	\arrow[from=1-1, to=1-2]
\end{tikzcd}\]
For any given $l\geq k$ we have $F_{kl}(f):F_{il}A_{(i)}\to F_{jl}B_{(j)}\in\text{arr}\CC_l$ and
\begin{equation*}P_l(F_{kl}(f)):P_l(F_{jl}B_{(j)})\to P_l(F_{il}A_{(i)}).\end{equation*}
Since $P_\bullet([F_{jk}B_{(j)}])=\faktor{\bigsqcup_{l\geq k}P_l(F_{jl}B_{(j)})}{\sim}$, we prove that the map
\begin{equation*}\bigsqcup_{l\geq k}P_l(F_{jl}B_{(j)})\longrightarrow\faktor{\bigsqcup_{m\geq k}P_m(F_{im}A_{(i)})}{\sim}\end{equation*}
sending any $\beta_l$ in $[P_l(F_{kl}(f))\beta_l]$ is well-defined on the quotient, hence defining a map from $P_\bullet([B_{(j)}])$ to $P_\bullet([A_{(i)}])$. Take $l'\geq l$---then, the statement for any $h\geq k$ follows---, so that $\beta_l\sim\big(\mathfrak{f}_{ll'}\big)_{F_{jl}B_{(j)}}\beta_l\in P_{l'}(F_{jl'}B_{(j)})$ and
\begin{equation*}\big(\mathfrak{f}_{ll'}\big)_{F_{jl}B_{(j)}}\beta_l\mapsto [P_{l'}(F_{kl'}(f))\big(\mathfrak{f}_{ll'}\big)_{F_{jl}B_{(j)}}\beta_l].\end{equation*}
We now use the naturality of $\mathfrak{f}_{ll'}$ and get:
\begin{equation*}[P_{l'}(F_{kl'}(f))\big(\mathfrak{f}_{ll'}\big)_{F_{jl}B_{(j)}}\beta_l]=[\big(\mathfrak{f}_{ll'}\big)_{F_{il}A_{(i)}}P_l(F_{kl}(f))\beta_l]=[P_l(F_{kl}(f))\beta_l]\end{equation*}
as claimed.

The following step is to prove that the definition of $P_\bullet([f])$ does not depend on the representative of $[f]$. Take $k'\geq k$, then $[f]=[F_{kk'}(f)]$, with $F_{kk'}(f):F_{ik'}A_{(i)}\to F_{jk'}B_{(j)}$. Hence we have for any $\beta_{l'}\in P_{l'}(F_{jl'}B_{(j)})$, $l'\geq k'$
\begin{equation*}[\beta_{l'}]\mapsto [P_{l'}({F_{k'l'}F_{kk'}}(f))\beta_{l'}]\end{equation*}
but ${F_{k'l'}F_{kk'}}=F_{kl'}$, the two maps act in the same way from $P_\bullet([B_{(j)}])$ to $P_\bullet([A_{(i)}])$.

It follows from the fact that $P_\bullet([f])$ is defined on any suitable $k'\geq k$ and that both $[\blank]$---in any $P_\bullet([C_{h}])$---and $P_{k'}(F_{kk'}(f))$ preserve the order, that $P_\bullet([f])$ preserves the order; moreover, also functoriality comes easily. Hence $P_\bullet:\CC_\bullet\op\to\Pos$ is indeed a doctrine.
\pointinproof{A universal cocone into $P_\bullet$} Now, for any $i\in I$, define the 1-arrow $(F_i,\mathfrak{f}_i):P_i\to P_\bullet$ in $\Dott$ as follows:
\begin{center}
\begin{tikzcd}
\CC_i^{\text{op}}\arrow[rr,"{F_i}^{\text{op}}"] \arrow[dr,"P_i"' ,""{name=L}]&&{\CC_\bullet}^{\text{op}}\arrow[dl,"P_\bullet" ,""'{name=R}]\\
&\Pos\arrow[rightarrow,"\mathfrak{f}_i","\cdot"', from=L, to=R, bend left=10]
\end{tikzcd}.
\end{center}
The functor $F_i$ is the quotient map, sending $f:A_{(i)}\to B_{(i)}$ to $[f]:[A_{(i)}]\to[B_{(i)}]$; observe that by construction such functors preserve finite products. Similarly $\mathfrak{f}_i:P_i\xrightarrow{\cdot}P_\bullet F_i\op$ is the quotient map on every object of $\CC_i$:
\begin{equation*}\big(\mathfrak{f}_i\big)_{A_{(i)}}:P_i(A_{(i)})\to P_\bullet([A_{(i)}])\text{ is defined by the assignment }\alpha_i\mapsto[\alpha_i].\end{equation*}
Such functions are clearly order-preserving. It follows trivially from the definition of $P_\bullet$ on arrows that $\mathfrak{f}_i$ is a natural transformation.
Now, to check that it is indeed a cocone, take $i\leq k$: we want $(F_k,\mathfrak{f}_k)\circ(F_{ik},\mathfrak{f}_{ik})=(F_i,\mathfrak{f}_i)$.
\begin{center}
\begin{tikzcd}
\CC_i\op\arrow[r,"{F_{ik}}\op"]\arrow[dr,"P_i"',""{name=L},bend right]&{\CC_k}\op\arrow[r,"{F_k}\op"]\arrow[d,""'{name=C},"P_k"{name=M}]&\CC_\bullet\op\arrow[dl, "P_\bullet",""'{name=R}, bend left]\\
&\Pos
\arrow[rightarrow,"\mathfrak{f}_{ik}"near end,"\cdot"', from=L, to=C, bend left=10]
\arrow[rightarrow,"\mathfrak{f}_k"near start,"\cdot"', from=M, to=R, bend left=10]
\end{tikzcd}
\end{center}
Concerning the functors between the base categories, observe that the composition
\[\begin{tikzcd}[column sep=15pt]
	{A_{(i)}} & {F_{ik}A_{(i)}} & {[F_{ik}A_{(i)}]} & {[A_{(i)}]} \\
	{B_{(i)}} & {F_{ik}B_{(i)}} & {[F_{ik}B_{(i)}]} & {[B_{(i)}]}
	\arrow[""{name=0, anchor=center, inner sep=0}, "f", from=1-1, to=2-1]
	\arrow[""{name=1, anchor=center, inner sep=0}, "{[F_{ik}(f)]}", from=1-3, to=2-3]
	\arrow["{=}"{description}, draw=none, from=1-3, to=1-4]
	\arrow["{=}"{description}, draw=none, from=2-3, to=2-4]
	\arrow[""{name=2, anchor=center, inner sep=0}, "{F_{ik}(f)}", from=1-2, to=2-2]
	\arrow[""{name=3, anchor=center, inner sep=0}, "{[f]}"', from=1-4, to=2-4]
	\arrow[shorten <=25pt, shorten >=8pt, maps to, from=2, to=1]
	\arrow["{=}"{description, pos=0.7}, Rightarrow, draw=none, from=1, to=3]
	\arrow[shorten <=11pt, shorten >=11pt, maps to, from=0, to=2]
\end{tikzcd}\]
is indeed $F_i$.
Then, for any $\alpha_i\in P_i(A_{(i)})$, we have:
\begin{equation*}\big(\mathfrak{f}_k\circ\mathfrak{f}_{ik}\big)_{A_{(i)}}\alpha_i=\big(\mathfrak{f}_k\big)_{F_{ik}A_{(i)}}\big(\mathfrak{f}_{ik}\big)_{A_{(i)}}\alpha_i=[\big(\mathfrak{f}_{ik}\big)_{A_{(i)}}\alpha_i]=[\alpha_i]=\big(\mathfrak{f}_i\big)_{A_{(i)}}\alpha_i,\end{equation*}
so that $\mathfrak{f}_k\circ\mathfrak{f}_{ik}=\mathfrak{f}_i$.

Suppose we have another cocone, i.e.\ any doctrine $R:\ct{D}\op\to\Pos$ that comes with a family of 1-arrows $\{(G_i,\mathfrak{g}_i):P_i\to R\}_{i\in I}$ such that for any $i\leq k$ one has $(G_k,\mathfrak{g}_k)\circ(F_{ik},\mathfrak{f}_{ik})=(G_i,\mathfrak{g}_i)$ we look for a unique 1-arrow $(G,\mathfrak{g}):P_\bullet\to R$ such that $(G,\mathfrak{g})\circ(F_i,\mathfrak{f}_i)=(G_i,\mathfrak{g}_i)$ for all $i\in I$.

In order to define $G:\CC_\bullet\to\ct{D}$, take any $[f]:[A_{(i)}]\to[B_{(j)}]$ with $f:F_{ik}A_{(i)}\to F_{jk}B_{(j)}$ for some $k\geq i,j$ and send it to $G_k(f):G_iA_{(i)}\to G_jB_{(j)}$. This is well-defined because of the commutativity properties of the cocone. Similarly we define $\mathfrak{g}:P_\bullet\xrightarrow{\cdot} RG\op$: for a given object $[A_{(i)}]$, we take
\begin{equation*}\mathfrak{g}_{[A_{(i)}]}:P_\bullet([A_{(i)}])\to RG_iA_{(i)},\text{ such that }[\alpha_k]\mapsto\big(\mathfrak{g}_k\big)_{F_{ik}A_{(i)}}\alpha_k\end{equation*}
for any $\alpha_k\in P_k(F_{ik}A_{(i)}), k\geq i$. This is well-defined on both $[\alpha_k]$ and $[A_{(i)}]$ again from the properties of the cocone.
Naturality of $\mathfrak{g}$ is also easy to see: given an arrow $[f]:[A_{(i)}]\to[B_{(j)}]$ we compute both $RG([f])\mathfrak{g}_{[B_{(j)}]}$ and $\mathfrak{g}_{[A_{(i)}]}P_\bullet([f])$ on a given $[\beta_l]\in P_\bullet([B_{(j)}])$:
\begin{align*}RG([f])\mathfrak{g}_{[B_{(j)}]}[\beta_l]&=RG_k(f)\big(\mathfrak{g}_l\big)_{F_{jl}B_{(j)}}\beta_l=RG_l(F_{kl}(f))\big(\mathfrak{g}_l\big)_{F_{jl}B_{(j)}}\beta_l\\
&=\big(\mathfrak{g}_l\big)_{F_{il}A_{(i)}}P_l(F_{kl}(f))\beta_l=\mathfrak{g}_{[A_{(i)}]}[P_l(F_{kl}(f))\beta_l]=\mathfrak{g}_{[A_{(i)}]}P_\bullet([f])[\beta_l].\end{align*}

Uniqueness is given by the fact that all triangles like the one below must commute.
\[\begin{tikzcd}
	{P_i} && R \\
	& {P_\bullet}
	\arrow["{(F_i,\mathfrak{f}_i)}"', from=1-1, to=2-2]
	\arrow["{(G,\mathfrak{g})}"', from=2-2, to=1-3]
	\arrow["{(G_i,\mathfrak{g}_i)}", from=1-1, to=1-3]
\end{tikzcd}\]\end{proof}

\subsection{Additional structure} We now show that many properties are preserved by a directed colimit.
\begin{proposition}[\Cref{sect:dir_col}, \Cref{prop:addit_struct}]
Let $I$ be a directed preorder, let $D:I\to\Dott$ be a diagram, $D(i\leq j)=[(F_{ij},\mathfrak{f}_{ij}):P_i\to P_j]$ for any $i,j\in I$, and let $(P_\bullet,\{(F_i,\mathfrak f _i)\}_{i\in I})$ be the colimit of $D$. Suppose that for every $i,j\in I$, the doctrine $P_i$ and the morphism $(F_{ij},\mathfrak{f}_{ij})$ are primary. Then the doctrine $P_\bullet$ is a primary doctrine, and for every $i\in I$ the morphism $(F_i,\mathfrak f _i)$ is primary.
Moreover, if in a cocone $(R,\{(G_i,\mathfrak g _i)\}_{i\in I})$, $R$ and $(G_i,\mathfrak g _i)$ are primary, then the unique arrow $(G,\mathfrak g):P_\bullet\to R$ defined by the universal property of the colimit is primary.
The same statement holds if we write respectively bounded, with binary joins, implicational, elementary, existential, universal, Heyting, Boolean instead of primary.
\end{proposition}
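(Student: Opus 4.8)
The plan rests on the fact, established in \Cref{prop:dir_colim_app}, that each fiber $P_\bullet([A_{(i)}])$ is a directed colimit in $\Pos$ of the posets $P_k(F_{ik}A_{(i)})$ for $k\geq i$, with transition maps the components $(\mathfrak{f}_{kj})_{F_{ik}A_{(i)}}$. Two standard features of directed colimits in $\Pos$ do all the work: (i) any finite family of elements of $P_\bullet([A_{(i)}])$ has representatives in a single poset $P_k(F_{ik}A_{(i)})$, obtained by pushing forward along the $\mathfrak{f}$'s to a common upper bound $k$; and (ii) for $\alpha_k,\beta_k\in P_k(F_{ik}A_{(i)})$ one has $[\alpha_k]\leq[\beta_k]$ in $P_\bullet([A_{(i)}])$ if and only if $(\mathfrak{f}_{kj})_{F_{ik}A_{(i)}}\alpha_k\leq(\mathfrak{f}_{kj})_{F_{ik}A_{(i)}}\beta_k$ for some $j\geq k$. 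Since every structure in the list is given by finitary operations together with (in)equalities between composites of such operations and of the reindexings, which act by $P_\bullet([f])[\beta]=[P_l(F_{kl}(f))\beta]$, facts (i)--(ii) let me transport each operation and verify each axiom by working in a common fiber $P_l$, where the corresponding property is available by hypothesis. I would write out the primary case and then indicate the uniform modifications.

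For the \emph{primary} case I would set $\top_{[A_{(i)}]}:=[\top_{F_{ik}A_{(i)}}]$, which is independent of $k\geq i$ because the transition maps preserve $\top$, and $[\alpha]\land[\beta]:=[\alpha_k\land\beta_k]$ after choosing common representatives via (i); well-definedness and the meet-semilattice axioms are inequalities among finitely many elements, hence reduce by (ii) to the same statements in some $P_l$. Naturality of $\top$ and $\land$---that is, their preservation by every reindexing $P_\bullet([f])$---is checked the same way, using that each $P_l(F_{kl}(f))$ preserves finite meets. The \emph{bounded}, \emph{binary joins}, \emph{implicational}, \emph{Heyting} and \emph{Boolean} cases are obtained verbatim by additionally introducing $\bot,\lor,\to$ on representatives (and by transporting the identity $\lnot\lnot(\blank)=\id{}$); the only mild point is that the adjunction defining $\to$ is contravariant in its first slot, but it is still an (in)equality among finitely many elements and so descends by (i)--(ii).

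The \emph{elementary}, \emph{existential} and \emph{universal} cases are where the main care is needed. For the elementary structure I would put $\delta_{[A_{(i)}]}:=[\delta_{F_{ik}A_{(i)}}]$ in the fiber over $[A_{(i)}]\underrightarrow{\times}[A_{(i)}]=[F_{ik}A_{(i)}\times F_{ik}A_{(i)}]$, using that the transition morphisms preserve fibered equalities, and then verify the three clauses of \Cref{def:elem} by pushing them into a common $P_l$. For the existential quantifier, after representing $[B]$ and $[C]$ over a common index so that the projection $[C]\underrightarrow{\times}[B]\to[C]$ is the class of a genuine projection, I would define the adjoint in $P_\bullet$ on representatives by the corresponding quantifier of $P_l$; well-definedness uses that the $\mathfrak{f}_{kj}$ preserve $\exists$, and the adjunction together with Frobenius reciprocity are finitary (in)equalities handled by (i)--(ii). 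The delicate step---and the one I expect to be the main obstacle---is Beck--Chevalley: I would argue that a pullback square of the shape in \Cref{def:exist} living in $\CC_\bullet$ is the class of such a square already in some $\CC_k$, because the projections and a chosen representative of $f$ lift to $\CC_k$ and the functors $F_{kl}$ preserve finite products (hence these particular pullbacks); once the square is lifted, the Beck--Chevalley identity of $P_l$ pushes forward. The universal case is identical with $\forall$ replacing $\exists$.

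It remains to see that the structure is preserved by the cocone legs and by the induced arrow. Each $\mathfrak{f}_i$ sends $\alpha_i\mapsto[\alpha_i]$, and since every operation on $P_\bullet$ was defined on representatives by the very operation of the $P_k$, the map $\mathfrak{f}_i$ commutes strictly with all of $\top,\bot,\land,\lor,\to,\delta,\exists,\forall$. For the unique factorisation $(G,\mathfrak{g})$, recall $\mathfrak{g}_{[A_{(i)}]}[\alpha_k]=(\mathfrak{g}_k)_{F_{ik}A_{(i)}}\alpha_k$; evaluating any structural identity on a representative in a common fiber and invoking the corresponding property of the (by assumption structure-preserving) $\mathfrak{g}_k$ yields preservation for $\mathfrak{g}$. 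This gives the statement uniformly for primary, bounded, binary joins, implicational, elementary, existential, universal, Heyting and Boolean structure.
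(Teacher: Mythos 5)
Your proposal is correct and follows essentially the same route as the paper's proof: operations and quantifiers are defined on representatives in a common fiber using directedness, the axioms (adjunctions, Frobenius, the three clauses of \Cref{def:elem}) are verified at a common stage where the hypotheses on the $P_k$ and $(F_{kl},\mathfrak{f}_{kl})$ apply, and preservation by the legs $(F_i,\mathfrak{f}_i)$ and the induced $(G,\mathfrak{g})$ is immediate from the definitions. If anything, you supply slightly more detail than the paper on the Beck--Chevalley condition, by explicitly lifting the relevant pullback square of projections to some $\CC_k$ via product preservation of the $F_{kl}$, a step the paper asserts without elaboration.
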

\begin{proof}{\bf Algebraic properties:}
It is a well-known fact that directed colimit of algebraic structures exists, hence if for all $i\in I$, $P_i$ is endowed with equational structure such as $\land,\top$ or $\lor,\bot$, then these operations are defined also in $P_\bullet$, preserved by $\mathfrak{f}_i$ for all $i\in I$. Such properties are also preserved by reindexing: this can be shown using naturality of $\mathfrak{f}_{ij}$ and the fact that they are preserved by reindexing in each $P_i$ . Moreover, since $\mathfrak{g}$ is defined through $\mathfrak{g}_i$'s, which preserve operations, also $\mathfrak{g}$ preserves them. 
\pointinproof{Implication}
We define for each pair of elements $[\alpha_k],[\beta_{k'}]\in P_\bullet{[A_{(i)}]}$, with $\alpha_k\in F_{ik}A_{(i)}$ and $\beta_{k'}\in F_{ik'}A_{(i)}$ for some $k,k'\geq i$
\begin{equation*}[\alpha_k]\to[\beta_{k'}]\coloneqq[\big(\mathfrak{f}_{kh}\big)_{F_{ik}A_{(i)}}\alpha_k\to\big(\mathfrak{f}_{k'h}\big)_{F_{ik'}A_{(i)}}\beta_{k'}]\end{equation*}
for some $h\geq k,k'$. This is well-defined because every function in $\{\mathfrak{f}_{ij}\}_{i,j\in I}$ preserves implications. Moreover, this is indeed a right adjoint to the binary meet operation:
\begin{equation}\label{eq:impl}[\gamma_{\overline{k}}]\leq[\alpha_k]\to[\beta_{k'}]\text{ in }P_\bullet{[A_{(i)}]}\end{equation}
if and only if there exists $s\geq\overline{k},k,k'$ such that in $P_s(F_{is}A_{(i)})$
\begin{equation*}\big(\mathfrak{f}_{\overline{k}s}\big)_{F_{i\overline{k}}A_{(i)}}\gamma_{\overline{k}}\leq\big(\mathfrak{f}_{ks}\big)_{F_{ik}A_{(i)}}\alpha_k\to\big(\mathfrak{f}_{k's}\big)_{F_{ik'}A_{(i)}}\beta_{k'},\end{equation*}
but this inequality holds if and only if
\begin{equation*}\big(\mathfrak{f}_{\overline{k}s}\big)_{F_{i\overline{k}}A_{(i)}}\gamma_{\overline{k}}\land\big(\mathfrak{f}_{ks}\big)_{F_{ik}A_{(i)}}\alpha_k\leq\big(\mathfrak{f}_{k's}\big)_{F_{ik'}A_{(i)}}\beta_{k'}\end{equation*}
so $\eqref{eq:impl}$ holds if and only if
\begin{equation*}[\gamma_{\overline{k}}]\land[\alpha_k]\leq[\beta_{k'}].\end{equation*}
Now, since $[\alpha_k]\to[\beta_{k'}]$ is computed in a common poset, as in the case of algebraic properties, the implication is preserved by reindexings, $\{\mathfrak{f}_i\}_{i\in I}$ and $\mathfrak{g}$. 
\pointinproof{Elementarity}
For a given object $[A_{(i)}]$, take $[\delta_{A_{(i)}}]\in P_{\bullet}([A_{(i)}\times A_{(i)}])$; we prove that it is the fibered equality on $[A_{(i)}]$.
\begin{enumerate}
\item Since in $P_i(A_{(i)})$ we have $\top_{A_{(i)}}\leq P_i(\Delta_{A_{(i)}})(\delta_{A_{(i)}})$, we have in $P_{\bullet}([A_{(i)}])$ that $\top_{[A_{(i)}]}=[\top_{A_{(i)}}]\leq [P_i(\Delta_{A_{(i)}})(\delta_{A_{(i)}})]=P_\bullet(\Delta_{[A_{(i)}]})([\delta_{A_{(i)}}])$.
\item For any $\alpha_k\in P_k(F_{ik}A_{(i)})$ with $k\geq i$, we want to show $P_\bullet(\pr1)([\alpha_k])\land [\delta_{A_{(i)}}]\leq P_\bullet(\pr2)([\alpha_k])$. Compute $P_\bullet(\pr1)([\alpha_k])\land [\delta_{A_{(i)}}]=[P_k(\pr1)(\alpha_k)\land(\mathfrak{f}_{ik})_{A_{(i)}\times A_{(i)}}(\delta_{A_{(i)}})]=[P_k(\pr1)(\alpha_k)\land\delta_{F_{ik}A_{(i)}}]\leq[P_k(\pr2)(\alpha_k)]=P_\bullet(\pr2)([\alpha_k])$.
\item For any pair of object $A_{(i)}$ in $\CC_i$ and $B_{(j)}$ in $\CC_j$, compute $[A_{(i)}]\underrightarrow{\times}[B_{(j)}]$ as $[F_{ik}A_{(i)}\times F_{jk}B_{(j)}]$ for some $k\geq i,j$. We want to prove that $P_\bullet(\ple{\pr1,\pr3})([\delta_{A_{(i)}}])\land P_\bullet(\ple{\pr2,\pr4})([\delta_{B_{(j)}}])\leq [\delta_{F_{ik}A_{(i)}\times F_{jk}B_{(j)}}]$. However,
\begin{align*}&P_\bullet(\ple{\pr1,\pr3})([\delta_{A_{(i)}}])\land P_\bullet(\ple{\pr2,\pr4})([\delta_{B_{(j)}}])\\
&=[P_k(\ple{\pr1,\pr3})(\mathfrak{f}_{ik})_{A_{(i)}\times A_{(i)}}(\delta_{A_{(i)}})]\land [P_k(\ple{\pr2,\pr4})(\mathfrak{f}_{jk})_{B_{(j)}\times B_{(j)}}(\delta_{B_{(j)}})]\\
&=[P_k(\ple{\pr1,\pr3})(\delta_{F_{ik}A_{(i)}})\land P_k(\ple{\pr2,\pr4})(\delta_{F_{jk}B_{(j)}})]\\
&\leq[\delta_{F_{ik}A_{(i)}\times F_{jk}B_{(j)}}],\end{align*}
as claimed.
\end{enumerate}
Moreover, by definition $\{\mathfrak{f}_i\}_{i\in I}$ and $\mathfrak{g}$ preserve the structure.
\pointinproof{Existentiality and Universality}
Take $[C_{(i)}],[B_{(j)}]$, consider $[\pr1]:[C_{(i)}]\underrightarrow{\times}[B_{(j)}]\to[C_{(i)}]$, where we call $\pr1:F_{ik}C_{(i)}\times F_{jk}B_{(j)}\to F_{ik}C_{(i)}$ the projection in $\CC_k$ for any $k\geq i,j$. Then consider
\begin{equation*}P_\bullet([\pr1]):P_\bullet([C_{(i)}])\to P_\bullet([C_{(i)}]\underrightarrow{\times}[B_{(j)}])\end{equation*}
and define
\begin{equation*}{\exists_\bullet}_{[C_{(i)}]}^{[B_{(j)}]}[\beta_l]\coloneqq[{\exists_l}^{F_{jl}B_{(j)}}_{F_{il}C_{(i)}}\beta_l]\quad\text{ and }\quad{\forall_\bullet}_{[C_{(i)}]}^{[B_{(j)}]}[\beta_l]\coloneqq[{\forall_l}^{F_{jl}B_{(j)}}_{F_{il}C_{(i)}}\beta_l]\end{equation*}
for $\beta_l\in P_l(F_{il}C_{(i)}\times F_{jl}B_{(j)})$, where $\exists_l$ and $\forall_l$ are respectively the existential and universal quantifier for the doctrine $P_l$.

This is well-defined since every map in $\{\mathfrak{f}_{ij}\}_{i,j\in I}$ preserves the structure.
Moreover, one can prove that ${\exists_\bullet}_{[C_{(i)}]}^{[B_{(j)}]}$ and ${\forall_\bullet}_{[C_{(i)}]}^{[B_{(j)}]}$ define respectively the left adjoint and the right adjoint to $P_\bullet([\pr1])$, that they both satisfy Beck-Chevalley condition, and Frobenius reciprocity for the existential quantifier holds---these follow from the correspondent properties of all $\exists_k$ and $\forall_k$. Furthermore, $\{\mathfrak{f}_i\}_{i\in I}$ and $\mathfrak{g}$ preserve the structures.
\end{proof}

\section*{Acknowledgement}
The author would like to acknowledge her Ph.D. supervisors Sandra Mantovani and Pino Rosolini: this paper is the third chapter of her Ph.D. thesis, defended at Università degli Studi di Milano. The author would also like to thank Jacopo Emmenegger for his valuable comments.

\bibliography{Biblio}
\bibliographystyle{alpha}

\end{document}